\documentclass{article}
\usepackage[a4paper,top=2.54cm,bottom=2.54cm,left=3.17cm,right=3.17cm]{geometry}
\usepackage[shortlabels]{enumitem}
\setenumerate[1]{itemsep=0pt,partopsep=0pt,parsep=\parskip,topsep=0pt}
\usepackage[tworuled,linesnumbered,noline,noend]{algorithm2e}
\usepackage{amsmath,amsthm,tikz,float,amssymb,tikz,mathtools,bbm,bm}
\usepackage[hidelinks]{hyperref}
\usepackage[numbers,sort&compress]{natbib}
\usepackage{algorithmic}

\allowdisplaybreaks

\newtheorem{theorem}{Theorem}[section]
\newtheorem{lemma}[theorem]{Lemma}
\newtheorem*{remark}{Remark}
\newtheorem{corollary}[theorem]{Corollary}

\newcommand{\opt}{\mathrm{OPT}}

\newcommand{\bp}{\mathbb{P}}

\newcommand{\bX}{\boldsymbol{X}}

\newcommand{\talg}{\mathrm{ALG}}

\newcommand{\ds}{\,\mathrm{d}s}
\newcommand{\dt}{\,\mathrm{d}t}
\newcommand{\dx}{\,\mathrm{d}x}
\usepackage{authblk}
\begin{document}
\title{\textbf{Online Trading as a Secretary Problem Variant}\thanks{This work was supported by the National Natural Science Foundation of China (Nos. 11971046, 12331014 and 71871009); Science and Technology Commission of Shanghai Municipality (No. 22DZ2229014).}}
\renewcommand\Authfont{\small} 
\renewcommand\Affilfont{\small\itshape}
\author[a]{Xujin Chen}
\author[a]{Xiaodong Hu}
\author[a]{Changjun Wang}
\author[b]{Yuchun Xiong}
\author[b]{Qingjie Ye}

\affil[a]{\footnotesize Academy of Mathematics and Systems Science, Chinese Academy of Sciences, Beijing 100190, China}
\affil[b]{\footnotesize School of Mathematical Sciences, Key Laboratory of MEA (Ministry of Education), Shanghai Key Laboratory of PMMP, East China Normal University, Shanghai 200241, China}
\affil[ ]{\textit {\{xchen, xdhu, wcj\}@amss.ac.cn, ycx@stu.ecnu.edu.cn, qjye@math.ecnu.edu.cn}}
\date{}
\maketitle

\begin{abstract}
    This paper studies an online trading variant of the classical secretary problem, called \emph{secretary problem variant trading} (SPVT), from the perspective of an intermediary who facilitates trade between a seller and $n$ buyers (collectively referred to as \emph{agents}). The seller has an item, and each buyer demands the item. These agents arrive sequentially in a uniformly random order to meet the intermediary, each revealing their valuation (price) of the item upon arrival. After each arrival, the intermediary must make an immediate and irrevocable decision (either buying from the seller or selling to a buyer) before the next agent appears. The intermediary's objective is to maximize the price of the agent who ultimately holds the item at the end of the process. As in the secretary problem, the intermediary (operating as an online algorithm) has no prior knowledge of agents' arrival order or prices. Note that the secretary problem can be viewed as a degenerate case of SPVT where the seller with zero price arrives first. We evaluate the performance of online algorithms for SPVT using two notions of competitive ratio: strong and weak.
    
    The \emph{strong} notion, directly inspired by the secretary problem, benchmarks the online algorithm against a powerful offline optimum: the highest price among the $n+1$ agents. We propose an online algorithm for SPVT achieving a strong competitive ratio of $\frac{4e^2}{e^2+1} \approx 3.523$, which is the best possible even when the seller's price may be zero. This tight ratio closes the gap between the previous best upper bound of $4.189$ and lower bound of $3.258$.
    
    In contrast, the \emph{weak} notion restricts the offline optimal algorithm to the given arrival order, although it still benefits from complete foreknowledge of the order and all agents' prices. The offline algorithm can no longer alter the predetermined arrival order to always place the item in the hands of the agent offering the highest price (as is permitted in the strong model). Against this weaker benchmark, we design a simple online algorithm for SPVT, achieving a weak competitive ratio of $2$. We further investigate the special case in which the seller's price is zero. For this special SPVT, we develop a double-threshold algorithm achieving a weak competitive ratio of at most $1.83683$ and establish a lower bound of $1.76239$. 

        \medskip
\noindent \textbf{Keywords: }  Secretary problem, Competitive ratio,  Linear programming, Online trading.
\end{abstract}

\section{Introduction}

The secretary problem \cite{ferguson1989who,gilbert1966recognizing,lindley1961dynamic,bruss1984unified} is a classic optimal stopping problem that has received considerable attention and research. The problem can be framed as scenarios where decision-makers try to sell a single good (item) for the highest possible price. Specifically, there are $n$ buyers who make offers to the decision-maker one after another, but the decision-maker has no prior knowledge of these offers and only learns the value of each one when it is made. Once a buyer makes an offer, the decision-maker must immediately decide whether to accept the offer, selling the item to that buyer. Once the decision to accept is made, it is final --- selling to a later buyer is no longer an option. Conversely, if the decision-maker declines the offer, they lose the opportunity to sell to that buyer permanently. 
Note that the decision-maker has no prior knowledge of the buyers' offers --- they only know that the offers come in a random order. At first glance, it might seem that as the number of buyers increases, identifying the ideal buyer would become more challenging, making it impossible to design an online algorithm with a constant competitive ratio. Surprisingly, however, the secretary problem has been shown to admit online algorithms that achieve constant competitive ratios.

\paragraph{Online trading model.} In this paper, we study the extensions of the secretary problem from a trading perspective, incorporating both buyers and a single \emph{seller}. In this setting, the decision-maker who initially holds no item acts as an intermediary between buyers and the seller. Specifically, $n$ buyers and a seller, collectively referred to as \emph{agents},  meet the intermediary (decision-maker) one by one in a completely uniform order. Each agent offers their valuation of the item to the intermediary as a price. The intermediary must decide immediately upon receiving the offer whether to trade with the current agent at the offered price --- either purchasing an item from the seller or selling an item to a buyer (provided the intermediary currently possesses one). Note that the trading model degenerates into the classical secretary problem when the seller arrives first and offers a price of zero. Thereby, we refer to this model as the \emph{secretary problem variant trading} (SPVT). 

\paragraph{Related work.} The most classic solution to the secretary problem employs a ``learn first, then decide'' strategy. Specifically, the decision-maker observes the first $\lfloor n/e \rfloor$ offers without accepting any of them. Subsequently, the decision-maker accepts the first offer that exceeds all preceding offers. As $n$ approaches infinity, this algorithm achieves a competitive ratio of $e$, meaning it secures at least $1/e$ of the expected value of the highest offer. This $e$ guarantee is asymptotically optimal for the secretary problem.
 
Koutsoupias and Lazos~\cite{koutsoupias2018online} introduced an online trading model that generalizes the SPVT by accommodating $m$ sellers while preserving the constraint that the intermediary's inventory is limited to a single item. They showed that no effective online algorithm exists for maximizing the intermediary’s gain from trade. For maximizing the social welfare -- the sum of prices of agents holding an item at the end of the trading process, the competitive ratio, studied in the literature, is \emph{strong} because it was measured against the \emph{strong} offline optimum --- the sum of $m$ highest prices among all $m+n$ agents --- which generally exceeds the achievable maximum social welfare. When $m=n$, Koutsoupias and Lazos~\cite{koutsoupias2018online} proposed an online algorithm with a strong competitive ratio of $1 + O(n^{-1/3} \log n)$.  Chen et~al.~\cite{chen2024algorithms}  generalized this balanced trading algorithm from \cite{koutsoupias2018online} to a general setting with large $m$ and $n$, achieving an asymptotic strong competitive ratio no greater than  $1 + O(m^{-1/3} \log m)$.  For single-buyer trading,  Chen et~al.~\cite{chen2024algorithms} developed an algorithm with the optimal strong competitive ratio of $1 + 1/m$. For single-seller trading, i.e., SPVT, they designed an algorithm with a strong competitive ratio of $4.189$, and proved that the best possible strong competitive ratio cannot be less than $3.258$. Due to the analytical complexity of the more realistic, agent-arrival-order-dependent offline optimum (referred to as the \emph{weak }offline optimum), the corresponding \emph{weak} competitive ratio is not yet well-studied even for SPVT, with few relevant results available. 

\paragraph{Other related work.} The secretary problem and their variants have remained popular research topics over time. When the 
task of selling one item is expanded to selling $k$ items with the goal of maximizing the total selling price, the problem transforms into the multiple-choice secretary problem \cite{kleinberg2005multiple}. This problem has been further generalized into the knapsack secretary problem \cite{babaioff2007knapsack} and the matroid secretary problem \cite{babaioff2007matroids,babaioff2018matroid}. 

The prophet inequality problem \citep{krengel1977semiamarts,krengel1978semiamarts,samuelcahn1984comparison} is another classic optimal stopping problem. The main difference between the secretary problem and the prophet inequality problem lies in the amount of information available to the decision-maker. In the secretary problem, the decision-maker has no prior knowledge of the buyers' offers --- they only know that the offers come in a random order. In contrast, in the prophet inequality problem, the order of buyers' offers is fixed, and the decision-maker knows the probability distribution of each buyer's offer before the process begins. The prophet secretary problem \cite{ehsani2018prophet,correa2021prophet} integrates characteristics from both the secretary problem and the prophet inequality problem. In this scenario, the decision-maker knows the price distribution of all buyers, who arrive randomly to make offers. When the prophet inequality problem is modified to allow the decision-maker to adjust the order of buyers' offers, it becomes the ordered selection prophet inequality problem \cite{peng2022order,bubna2023prophet}. In particular, when each buyer's offer follows the same known distribution, it is termed the independent and identically distributed prophet inequality problem \cite{hill1982comparisons,correa2017posted}, where the order of buyers' offers becomes irrelevant to the outcome. Additionally, multiple-choice prophet inequality problems \cite{hajiaghayi2007automated,alaei2014bayesian}, multiple-choice prophet secretary problems \cite{arnosti2023tight}, and matroid prophet inequality problems \cite{kleinberg2012matroid} have also been extensively investigated. Recent research has introduced sample-driven optimal stopping problems  \cite{correa2022prophet,correa2023sample}, where the decision-maker learns from historical data rather than knowing the exact price distribution. This problem establishes a link between the secretary problem and the independent and identically distributed prophet inequality problem.

Correa et al. \cite{correa2023trading} proposed a variant of online trading based on the prophet inequality problem. The objective is to maximize the decision-maker's profit by buying low and selling high in the market. The decision-maker's inventory is constrained to holding at most an item during the trading process. There are $n$ trading moments. At each trading moment, there is a market price at which the decision-maker can either buy or sell an item. Correa et al. \cite{correa2023trading}  proved that if the market prices are independent and identically distributed (i.i.d.)\ and the distribution is known to the decision-maker, then there exists an algorithm with a competitive ratio of 2, which is the best possible. Furthermore, when prices are i.i.d.\ but the distribution is unknown, they demonstrated the existence of an algorithm with a competitive ratio of $(2n-2)/(n-2)$. For non-i.i.d.\ prices, they proved that no effective algorithm exists.

The field has also seen attention on related models. For simultaneous arrival of buyers and sellers, Myerson and Satterthwaite~\cite{myerson1983efficient} considered the problem of designing trading mechanisms, proving the famous impossibility result: it is impossible to design a Bayesian incentive-compatible and individually rational mechanism that always allocates items to the party with higher price. Recently, research on approximation mechanisms has made significant progress, including cases for maximizing social welfare \cite{blumrosen2014reallocation} and profit \cite{blumrosen2016approximating}. The online portfolio problem \cite{online2014li}, which focuses on allocating funds among assets to maximize investment returns, has also been widely studied. 

\paragraph{Our contribution.} 
In Section~\ref{sec:pre}, we present the mathematical model for the SPVT and formally define the strong and weak competitive ratios, which compare the solution output by the online algorithm with the strong and weak offline optimum, respectively. In Section \ref{sec:strong}, we study online algorithm design for SPVT to minimize the strong competitive ratio. We propose a novel algorithm (Algorithm~\ref{alg:our}) that achieves a strong competitive ratio of $\frac{4e^2}{e^2+1} \approx 3.523$ (Theorem~\ref{thm:tight}), which is proved to be tight (Theorem~\ref{thm:lower}). 
This closes the gap between the prior upper bound 4.189 and lower bound 3.258 \cite{chen2024algorithms}.  
In Section \ref{sec:weak}, we design online algorithms with small weak competitive ratios.
We first present an algorithm (Algorithm~\ref{alg:our2}) that achieves a weak competitive ratio of 2 (Theorem~\ref{thm:algweak}). For the special case where the seller's price is zero, we further develop a double-threshold algorithm (Algorithm~\ref{alg:our3}), which achieves a weak competitive ratio of at most $1.83683$ (Theorem~\ref{thm:zeroupper}). Moreover, we establish a lower bound of $1.76239$ for the weak competitive ratio of this special SPVT (Theorem~\ref{thm:weaklower}), which automatically applies to the general SPVT. See Table~\ref{table:compare} for a summary of our results and comparison with previous work \cite{chen2024algorithms} on SPVT.

\begin{table}[H]
\caption{Competitive ratios on the SPVT}\label{table:compare}
\centering
\begin{tabular}{cccc}
\hline
Seller's price \,\,& Competitive ratio type \,\, & Upper bound \,\, & Lower bound                                    \\\hline

Any or 0   & Strong             &    3.523,\;
 4.189 \cite{chen2024algorithms}    & \;\;3.523,\;
 3.258 \cite{chen2024algorithms}                                  \\

Any & Weak  &   \hspace{0mm}2,\hspace{6mm}---\,\;   & $1.76239$,\hspace{6mm}---\;\;\;\;\;\;\;\;\;     \\

0 & Weak  & $1.83683$,\hspace{6mm}---\;\;\;\;\;\;\;\;\;\;\; & $1.76239$,\hspace{6mm}---\;\;\;\;\;\;\;\;\; \\ 
\hline
\end{tabular}
\end{table}

To highlight the technical novelty and challenges in our algorithm design and analysis, we briefly mention some key connections and distinctions between our work and previous related results and methods. 
Our SPVT resembles the classic secretary problem, but the online trading problem must balance between two possibilities: whether the highest price is offered by the seller or by a buyer. The core task is to decide whether to buy the item from the (unique) seller based on the timing of their offer and the number of prices observed. Once the algorithm decides not to buy from the seller, the process terminates immediately. Conversely, if the item is bought from the seller, the remaining task reduces to selling it to the highest buyer, effectively transforming the problem into the classical secretary problem. We follow the framework of Chen et~al.~\cite{chen2024algorithms} and adopt a similar approach, but with a more refined consideration of details. 

The key difference between our approach and that of \cite{chen2024algorithms} lies in how we treat the seller. Unlike the previous approach, in this paper, we do not consider the seller as a fictitious buyer. To be more specific, from the perspective of the offline optimal solution, the seller's arrival is equivalent to the intermediary obtaining an item from an imaginary charity, and ``intermediary not buying from the seller'' in the original trading setting is equivalent to ``selling the item to the seller'' (who is thus a fictitious buyer) in the imaginary setting. However, from the perspective of designing an online algorithm, these two situations are not equivalent: the algorithm has full control over keeping the item with the seller but does not have full control over selling it to the highest-price buyer. We exploit this subtle yet important distinction to design online algorithms for SPVT that yield a tight strong competitive ratio and small weak competitive ratios. Furthermore, we successfully establish a tight bound for the strong competitive ratio by utilizing a linear programming framework and Ramsey's theorem on infinite hypergraphs.

\section{Preliminaries}\label{sec:pre}
We formalize the model of SPVT (Secretary Problem Variant Trading). Buyers'  non-negative prices (valuations) for the single item are denoted as $X_1, \dots, X_n$, while the seller's price is denoted as $X_{n+1}$. The buyers and seller, collectively referred to as \emph{agents}, arrive at the intermediary according to a uniformly random permutation. We order agents using their arrival indices. The \emph{$i$-th agent} (resp.\ \emph{$i$-th buyer}) is the one whose arrival is preceded by the arrivals of exactly $i-1$ agents  (resp.\ buyers). The $i$-th agent is \emph{earlier} or \emph{precedes} the $j$-th agent if and only if $i<j$.

When the $i$-th agent arrives, they submit the value $X_{\sigma(i)}$ as a price to the intermediary, where $\sigma \in S_{n+1}$ is a permutation on the set $[n+1] = \{1, 2, \dots, n+1\}$, mapping the arrival index $i$ to the agents' identity $\sigma(i)$. 

\begin{itemize}
    \item The seller starts with one item. Neither the buyers nor the intermediary initially hold any items. Each buyer demands one item. 
    \item If the $i$-th agent is a buyer, i.e., $\sigma(i) \leq n$, they offer to purchase one item from the intermediary at the price $X_{\sigma(i)}$. Let the random variable $Y_{\sigma(i)}$ indicate the transaction outcome: $Y_{\sigma(i)} = 1$ if the purchase is successful, and $Y_{\sigma(i)} = 0$ otherwise.
    \item If the $i$-th agent is the seller, i.e., $\sigma(i) = n+1$, they offer to sell the item to the intermediary at the price $X_{\sigma(i)}$. The random variable $Y_{\sigma(i)}$ indicates the outcome: $Y_{\sigma(i)} = 1$ if the sale is successful, and $Y_{\sigma(i)} = 0$ otherwise.
\end{itemize}

Whether the agent is a buyer or the seller, the intermediary must immediately make an irrevocable decision about whether to trade with the agent. The intermediary can only sell an item to a buyer if it possesses one, and can only purchase an item from a seller if it doesn't currently hold one. 

\paragraph{Optimization objectives.} The SPVT problem requires the intermediary to make real-time feasible decisions  $Y_{\sigma(i)}\in\{0,1\}$ for each arriving agent $\sigma(i)$, $i = 1, \dots, n+1$, where $Y_{\sigma(i)}=1$ indicates making a deal and $Y_{\sigma(i)}=0$ means passing, 
to maximize the social welfare:
\[
    \max \,\mathbb{SW}=\sum_{i=1}^n X_i Y_i +  X_{n+1} (1 - Y_{n+1}).
\]
In other words, the intermediary wants the agent who ends up holding the item to have the highest valuation, where $X_i$ represents the item's utility to the $\sigma^{-1}(i)$-th agent.

\paragraph{Offline optimum.} The competitive ratio of an online algorithm, which makes the decision on behalf of the intermediary, is the maximum (worst-case) ratio between the offline optimal value and the objective value of the algorithm’s output. To clearly define the competitive ratio, it is necessary to specify how the corresponding offline optimization problem is defined. 
Typically, there are two choices:
\begin{itemize}
    \item \emph{Strong offline optimum}: The offline algorithm assumes that agents arrive in the order that maximizes the objective value. This maximum value is $\max_{i=1}^n X_{i}$, called the strong offline optimum.
    \item \emph{Weak offline optimum}: Under the assumption that agents arrive randomly, the arrival order is given at the start to the offline algorithm, which makes the best decision in response to this given order.  The optimal objective value is an expectation that is taken over all $(n+1)!$ random orders.
\end{itemize}

The strong offline optimum is generally larger than the weak one in SPVT (although they are the same for the classical secretary problem). The competitive ratio based on the strong offline optimum is called the \emph{strong competitive ratio}, and the one based on the weak offline optimum is termed the \emph{weak competitive ratio}.

\section{Strong competitive ratios}\label{sec:strong}
Throughout this section, we focus exclusively on online algorithm design for minimizing the strong competitive ratio. For brevity, all references to offline optimal solutions and competitive ratios pertain to their strong versions, and we will often omit explicit mention of ``strong'' henceforth within this section.

Given the price vector $\bX = (X_1, X_2, \dots, X_{n+1})$, we assume all values are distinct, employing a universal tie-breaking rule if necessary. The buyers are labeled such that their values are sorted in descending order: $X_1 > X_2 > \dots > X_n$. Clearly, the offline optimal value is 
\[\opt(\bX) = \max\{X_1, X_2, \dots, X_{n+1}\} = \max\{X_1, X_{n+1}\}.\]
For an algorithm $\talg$ applied to the SPVT, let $p_i^{\talg}(\bX)$ denote the probability that the item remains with agent $i$ in the end. 
The expected social welfare achieved by the algorithm is  \[\talg(\bX) = \sum_{i=1}^{n+1} X_i p_i^{\talg}(\bX).\]
Let $\rho_n = \min_{\talg} \max_{\bX} \frac{\opt(\bX)}{\talg(\bX)}$ denote the best strong competitive ratio of SPVT with $n$ buyers. Chen et~al.~\cite{chen2024algorithms} established that
$3.258 \le \lim_{n \to \infty} \rho_n \le 4.189$.
In this section, we will prove that
\[
\rho_n \le \lim_{\eta \to \infty} \rho_\eta = \frac{4e^2}{e^2+1} \approx 3.523.
\]

We break the proof into two parts: Section~\ref{sec:upper} presents a random online algorithm (Algorithm~\ref{alg:our}) with strong competitive ratio at most $\frac{4e^2}{e^2+1}$, providing an upper bound for $\rho_n$ (Corollary~\ref{cor:upper}), while Theorem~\ref{thm:lower} in Section~\ref{sec:lower} establishes the matching lower bound $\frac{4e^2}{e^2+1}-o(1)$. The lower bound holds even when the seller's price is 0.

\subsection{Random transaction: upper bounding the strong ratio} 
\label{sec:upper}
To solve the SPVT, we employ a randomization technique that was first introduced by Bruss~\cite{bruss1984unified} to handle the classical secretary problem. The most challenging part of our algorithm design (Algorithm~\ref{alg:our}) is how the intermediary decides to buy the item from the seller, which has not been addressed in previous studies of classical counterparts.

\begin{algorithm}[ht]
    \caption{Random Transaction for SPVT}
    \label{alg:our}
    Let $T_1, T_2, \dots, T_{n+1}$ be $n+1$ independent random numbers uniformly distributed on $[0,1]$, where $T_{1} < T_{2} < \dots < T_{{n+1}}$ is assumed (via employing number rearrangement and a universal tie-breaking rule). Consider $T_i$ as the specific \emph{arrival time} of the $i$-th agent for all $i\in[n+1]$.

   \textbf{If} the seller arrives at a time later than $(e-1)/e$ and the seller's price is higher than all agents preceding it, \textbf{then} the intermediary skips over all agents (the algorithm stops), \textbf{else} the intermediary buys the item from the seller.

After the time the intermediary buys from the seller,    let $b$ be the earliest buyer satisfying the following conditions: $b$ arrives at a time after $1/e$, and $b$'s price is higher than all agents preceding $b$. 
\textbf{If} such a buyer $b$ exists, \textbf{then} the intermediary sells the item to $b$.
\end{algorithm}

\begin{theorem}\label{thm:tight}
    The strong competitive ratio for Algorithm \ref{alg:our} is at most $(4e^2)/(e^2+1)\approx 3.523$.
\end{theorem}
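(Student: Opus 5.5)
Because the arrival times are $n+1$ i.i.d.\ uniform points on $[0,1]$, I will think of each agent as having its own independent uniform time. Conditioning on the ordering, agent $i$ carries time $t_i$. Since $\opt(\bX)=\max\{X_1,X_{n+1}\}$, it is enough to establish two lower bounds. The first is a lower bound on $p_{n+1}^{\talg}(\bX)$ when the seller is the overall maximum. The second is a lower bound on $p_1^{\talg}(\bX)$ together with $p_{n+1}^{\talg}(\bX)$ when buyer $1$ is the overall maximum. In the second case we must use that $X_{n+1}$ may exceed many buyers. Write $c=\frac{4e^2}{e^2+1}$. The goal is $\talg(\bX)\ge \opt(\bX)/c$ for every $\bX$. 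The plan is to find, in each case, a lower bound that does not depend on $n$ or on the ranks of the other prices. I expect these bounds to be, or at least to exceed, $(e^2+1)/(4e^2)$.

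\textbf{Case 1: the seller has the highest price.} The seller keeps the item whenever $t_{n+1}>(e-1)/e$. In that event it beats everyone before it, and the algorithm stops. If instead $t_{n+1}\le (e-1)/e$, the intermediary buys at price $X_{n+1}$. After that, no buyer can exceed all preceding agents, because the seller precedes them and is the maximum. So no sale happens, and the item ends up with the intermediary, contributing $0$ (or the item stays unsold). Hence $p_{n+1}=1/e$. I still need to check whether this meets the target: $1/e\approx0.368\ge (e^2+1)/(4e^2)\approx0.284$. It does, so this case has slack.

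\textbf{Case 2: buyer $1$ has the highest price, with $X_{n+1}>X_k$ exactly for buyers $k>r$.} I will write $\talg\ge X_1 p_1 + X_{n+1}p_{n+1}$. Using $X_{n+1}\ge0$, the requirement reduces to $p_1\ge 1/c$. Only if that fails will I bring in $p_{n+1}$, which is where the seller's relative rank matters. To compute $p_1$, condition on $t_1=t$. Buyer $1$ receives the item exactly when the following three things hold:
\begin{enumerate}
\item $t>1/e$;
\item the seller bought, so that the intermediary holds the item;
\item no earlier buyer after time $\max(1/e, t_{n+1})$ was a left-to-right maximum among all agents.
\end{enumerate}
I will split on the seller time $s=t_{n+1}$. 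If $s<1/e$, the seller is always bought. Then the classical Bruss argument applies to the $n+1$ agents: the maximum over $[0,t)$ must lie in $[0,1/e)$. In the $n\to\infty$ worst case this has probability at least $\frac{1}{e\,t}$. If $1/e\le s\le t$, the seller is bought only if it is not a prefix maximum or if $s\le (e-1)/e$. After buying, buyer $1$ must be the first record after $s$. The probability is governed by where the maximum of $[0,t)$ falls relative to $s$. Integrating these pieces over $s$ and $t$ should give an expression of the form $\int_{1/e}^1(\dots)\,dt$. The threshold $(e-1)/e$ produces the $e^{-2}$ terms, which is presumably the origin of $(e^2+1)/(4e^2)$. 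It remains to show that the worst case over the seller's rank, which affects the probability that the seller is a prefix maximum, does not push this below $1/c$. This is the main obstacle. If the seller ranks high (small $r$), then it is often a prefix maximum and is skipped when $s>(e-1)/e$, which lowers $p_1$. In that regime, however, $X_{n+1}$ is large and $p_{n+1}$ compensates.

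So for the hard step I will first parametrize the worst case. I will let $n\to\infty$, which I expect to be monotonically worst, and treat the seller's price as a fraction $\alpha=X_{n+1}/X_1\in[0,1]$ with the seller ranked just below buyer $1$. In the limit, the seller is a prefix maximum exactly when buyer $1$ has not yet arrived. I will compute $p_1$ and $p_{n+1}$ as explicit integrals in this limit. Then I will show that $p_1+\alpha p_{n+1}\ge 1/c$ for all $\alpha$. The expression is linear in $\alpha$, so only $\alpha=0$ needs checking, and there I expect equality at $\frac{e^2+1}{4e^2}$. Finally, I will argue that a lower-ranked seller only increases $p_1$ by a coupling: lowering the seller's rank can only cause more purchases, and it removes blocking records. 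I will also check that finite $n$ is no worse than the limit, using the standard monotonicity of Bruss-type bounds.
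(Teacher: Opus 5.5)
You have the right frame (Case 1 gives $p_{n+1}=1/e$, and in Case 2 it suffices to show $p_1\ge (e^2+1)/(4e^2)$), but the step that handles the seller's rank has its monotonicity backwards, so the plan cannot be completed.

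Lowering the seller's price does not remove blocking records; it creates them. Once the item has been bought, a buyer is a valid sale target only if it beats every preceding agent, including the seller. So buyers priced below the seller can never block. Every buyer moved above the seller, however, becomes a potential record in $(\max\{s,1/e\},t_1)$.

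Write $\mu$ for the number of buyers above the seller (your $r$). Then $p_1$ depends only on $\mu$, not on $n$. The paper computes it exactly as $\delta_\mu$ and shows $\delta_\mu-\delta_{\mu+1}>0$. The value $(e^2+1)/(4e^2)$ is approached only as $\mu\to\infty$, which is effectively a zero-price seller; this is also the instance family behind the matching lower bound.

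At your proposed extremal configuration $\mu=1$, the only buyer who can be a record after the purchase is buyer $1$. Directly,
$p_1=\mathbb{P}\bigl(t_1>\max\{s,1/e\},\ s\le (e-1)/e\bigr)=\frac12-\frac1{e^2}\approx 0.365$.
So the equality you expect at $\alpha=0$ does not occur. Your final coupling claim, that a lower-ranked seller only increases $p_1$, is false. As planned, you would verify only an easy case and then rely on a false reduction for everything else.

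What is missing is a direct analysis of the large-$\mu$ regime, which is exactly where the bound is tight. No compensation from $p_{n+1}$ is available there, since $X_{n+1}/X_1$ can be arbitrarily small. Letting $n\to\infty$ with the seller's rank fixed does nothing, because buyers below the seller are irrelevant.

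To repair the argument:
\begin{enumerate}
\item Compute $p_1$ as a function of $\mu$ by splitting on $s<1/e$, $1/e\le s\le (e-1)/e$, and $s>(e-1)/e$.
\item Use $\mathbb{P}(B_t=\emptyset)=(1-t)^{\mu-1}$, where $B_t$ is the set of buyers $2,\dots,\mu$ arriving before $t$.
\item Use the fact that, conditional on $B_t\neq\emptyset$, the arrival time of the best buyer in $B_t$ is uniform on $[0,t]$.
\item Prove the resulting expression decreases in $\mu$; the paper does this via an explicit formula for $\delta_\mu-\delta_{\mu+1}$.
\item Evaluate its limit, $\int_0^{1/e}\!\int_{1/e}^{1}\frac{1}{et}\,\mathrm{d}t\,\mathrm{d}s+\int_{1/e}^{1}s\ln\frac{1}{s}\,\mathrm{d}s=\frac{e^2+1}{4e^2}$.
\end{enumerate}
Incidentally, the $(e-1)/e$ threshold drops out of this limit; the $e^{-2}$ terms come from the $1/e$ threshold, not from $(e-1)/e$ as you guessed.
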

\begin{proof} Recalling $\opt(\bX) = \max\{X_1, X_{n+1}\}$, if $X_{n+1}> X_1$, then the item is ultimately held by the seller if and only if the seller's arrival time is later than $(e-1)/e$. Since the arrival time of the seller is uniform on $[0,1]$, we have
   $p_{n+1}^{\talg}(\bX)=1-\frac{e-1}{e}=\frac{1}{e}$, which gives
    \[\frac{\opt(\bX)}{\talg(\bX)}\le \frac{X_{n+1}}{X_{n+1}\cdot p_{n+1}^{\talg}(\bX)}=e<\frac{4e^2}{e^2+1}.\]

    It remains to consider the case of  $X_{n+1}< X_1$. Suppose that there are exactly $\mu\,(\ge1)$ buyers whose prices are higher than the seller's. Recall that $X_1 > X_2 > \dots > X_n$. We have  $X_\mu>X_{n+1}$ and either $\mu=n$ or $X_{n+1}> X_{\mu+1}$. Let $t_{n+1}$ denote the arrival time of the seller, and $t_1$ denote the arrival time of buyer 1 (the buyer with the highest price). For any time point $t\in[0,1]$, let $B_t=\{i:2\le i\le \mu$,  buyer $i$  arrives before $t\}$ consists of buyers who offer a higher price than the seller and arrive earlier than time $t$. When $B_t\neq \emptyset$, for any $s\in[0,t]$, let $A_{s,t}$ denote the event that the \emph{best} buyer (the one with the highest price) in $B_t$ arrives before time $s$. 
    
    If $t_{n+1}\le (e-1)/e$, which implies that the intermediary buys from the seller, then SW achieved by the algorithm is $X_1$ if and only if $t_1> \max\{t_{n+1},1/e\}$ and either $B_{t_1}=\emptyset$ or the best buyer in $B_{t_1}$ arrives before time $\max\{t_{n+1},1/e\}$. If $t_{n+1}>(e-1)/e$, then the algorithm attains  $\text{SW}=X_1$ if and only if $t_1> t_{n+1}$ (buyer 1 arrives after the seller), $B_{t_{n+1}}\neq \emptyset$ (some preceding buyer offers a higher price than the seller) and $A_{t_{n+1},t_1}$ happens (the best buyer in $B_{t_1}$ arrives before $t_{n+1}$). Therefore, using variables $s$ and $t$ to represent $t_{n+1}$ and $t_1$, respectively, we obtain
    \begin{align*}
        p_1^{\talg}(\bX)=&\int_{0}^{1/e} \ds \int_{1/e}^{1} (\mathbb{P}(B_t= \emptyset)+\mathbb{P}(B_t\neq \emptyset) \cdot \mathbb{P}(A_{1/e,t}|B_t\neq \emptyset))\,\dt\\
        &+\int_{1/e}^{(e-1)/e} \ds \int_{s}^{1} (\mathbb{P}(B_t= \emptyset)+\mathbb{P}(B_t\neq \emptyset) \cdot \mathbb{P}(A_{s,t}|B_t\neq \emptyset))\,\dt\\
        &+\int_{(e-1)/e}^{1} \mathbb{P}(B_s\neq \emptyset)\,\ds \int_{s}^{1} \mathbb{P}(A_{s,t}|B_s\neq \emptyset)\,\dt.
    \end{align*}

    By definition, we have $\mathbb{P}(B_t= \emptyset)=\prod_{i=2}^{\mu} \mathbb{P}(\text{buyer $i$ arrives after time $t$})=(1-t)^{\mu-1}$. Since the arrival time of the best buyer in $B_t$ is uniform on $[0,t]$, we have $\mathbb{P}(A_{s,t}|B_t\neq \emptyset)=s/t$. Note that the event $(B_s\neq \emptyset)\cap A_{s,t}$ happens if and only if so does $(B_t\neq \emptyset)\cap A_{s,t}$, Thus we have $\mathbb{P}(B_s\neq \emptyset) \cdot \mathbb{P}(A_{s,t}|B_s\neq \emptyset)=\mathbb{P}(B_t\neq \emptyset) \cdot \mathbb{P}(A_{s,t}|B_t\neq \emptyset)$, giving
    \begin{align*}
        p_1^{\talg}(\bX)=&\int_{0}^{1/e} \ds \int_{1/e}^{1} \left((1-t)^{\mu-1}+\frac{1-(1-t)^{\mu-1}}{et}\right)\,\dt\\
        &+\int_{1/e}^{(e-1)/e} \ds \int_{s}^{1} (1-t)^{\mu-1}\,\dt\\
        &+\int_{1/e}^{1} s\,\ds \int_{s}^{1} \frac{1-(1-t)^{\mu-1}}{t}\,\dt.
    \end{align*}

For convenience, we write the above equation as $p_1^{\talg}(\bX)=\delta_\mu=\alpha_\mu+\beta_\mu+\gamma_\mu$, where
    \begin{align*}
        \alpha_\mu&=\int_{0}^{1/e} \ds \int_{1/e}^{1} \left((1-t)^{\mu-1}+\frac{1-(1-t)^{\mu-1}}{et}\right)\,\dt\\
        \beta_\mu&=\int_{1/e}^{(e-1)/e} \ds \int_{s}^{1} (1-t)^{\mu-1}\,\dt\\
        \gamma_\mu&=\int_{1/e}^{1} s\,\ds \int_{s}^{1} \frac{1-(1-t)^{\mu-1}}{t}\,\dt.
    \end{align*}

Notice that
    \begin{align*}
        \alpha_\mu-\alpha_{\mu+1}&=\int_{0}^{1/e} \ds \int_{1/e}^{1} \left(t(1-t)^{\mu-1}-\frac{(1-t)^{\mu-1}}{e}\right)\,\dt\\
        &=\frac{1}{e}\cdot \int_{0}^{(e-1)/e} \left(\frac{e-1}{e}\cdot t^{\mu-1}-t^\mu\right)\,\dt\\
        &= \frac{1}{e\mu(\mu+1)}\cdot \left(\frac{e-1}{e}\right)^{\mu+1},\\
        \beta_\mu-\beta_{\mu+1}&=\int_{1/e}^{(e-1)/e} \ds \int_{s}^{1} t(1-t)^{\mu-1}\,\dt\\
        &=\int_{1/e}^{(e-1)/e} \ds \int_{0}^{s} (t^{\mu-1}-t^\mu)\,\dt\\
        &=\int_{1/e}^{(e-1)/e} \left(\frac{s^\mu}{\mu}-\frac{s^{\mu+1}}{\mu+1}\right)\,\ds\\
        &=\frac{1}{\mu(\mu+1)(\mu+2)}\left(\left(\frac{e-1}{e}\right)^{\mu+1}\cdot \frac{2e+\mu}{e}-\frac{1}{e^{\mu+1}}\cdot \frac{2e+\mu e-\mu}{e}\right),\\
        \gamma_\mu-\gamma_{\mu+1}&=-\int_{1/e}^{1} s\,\ds \int_{s}^{1} (1-t)^{\mu-1}\,\dt\\
        &=-\int_{0}^{(e-1)/e} (1-s)\,\ds \int_{0}^{s} t^{\mu-1}\,\dt\\
        &=-\int_{0}^{(e-1)/e} \frac{s^\mu-s^{\mu+1}}{\mu}\,\ds\\
        &=-\frac{\mu+1+e}{e\mu(\mu+1)(\mu+2)}\cdot \left(\frac{e-1}{e}\right)^{\mu+1},
    \end{align*}
   and
    \begin{align*}
        \delta_\mu-\delta_{\mu+1}&=\frac{1}{e^{\mu+2}\mu(\mu+1)(\mu+2)}\cdot \left((\mu+1+e)\cdot (e-1)^{\mu+1}-(2e+\mu e-\mu)\right)\\
        &\ge \frac{2e+\mu e-\mu}{e^{\mu+2}\mu(\mu+1)(\mu+2)}\cdot ((e-1)^{\mu}-1)\\
        &>0.
    \end{align*}
    where the first inequality follows because $(e-1)(e+1)\approx 6.389>5.437\approx 2e$. It implies that $\{\delta_\mu\}$ is a decreasing sequence, and thus
    \[p_1^{\talg}(\bX)=\delta_\mu\ge \lim_{\nu\rightarrow +\infty} \delta_\nu=\int_{0}^{1/e} \ds \int_{1/e}^{1} \frac{1}{et}\,\dt+\int_{1/e}^{1} s\,\ds \int_{s}^{1} \frac{1}{t}\,\dt=\frac{e^2+1}{4e^2}.\]
    It follows that the competitive ratio of the algorithm is 
    \[\frac{\opt(\bX)}{\talg(\bX)}\le \frac{X_1}{X_1\cdot p_1^{\talg}(\bX)}\le \frac{4e^2}{e^2+1}.\qedhere\]
\end{proof}

Instantly, we have $\frac{4e^2}{e^2+1}$ as an upper bound on the best strong competitive ratio $\rho_n$ for the SPVT.

\begin{corollary}\label{cor:upper}
    $\rho_n\le \frac{4e^2}{e^2+1}\approx 3.523$ for all $n\in\mathbb{N}$.
\end{corollary}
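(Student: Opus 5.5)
The plan is to read Corollary~\ref{cor:upper} off Theorem~\ref{thm:tight}, taking care that the theorem's guarantee holds for every fixed $n$ and not only asymptotically. By definition, $\rho_n=\min_{\talg}\max_{\bX}\opt(\bX)/\talg(\bX)$. So it suffices to exhibit, for each $n\in\mathbb{N}$, one online algorithm whose ratio is at most $\frac{4e^2}{e^2+1}$ on every price vector $\bX$ with $n$ buyers. The candidate is Algorithm~\ref{alg:our}. It is a valid online algorithm for any $n$: its decisions use only the agents' arrival times (sampled internally) and comparisons with already-arrived prices.

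First, I would check that the proof of Theorem~\ref{thm:tight} makes no asymptotic assumption on $n$. The case $X_{n+1}>X_1$ gives ratio exactly $e$, uniformly in $n$. In the case $X_{n+1}<X_1$, the bound comes from $p_1^{\talg}(\bX)=\delta_\mu$ with $1\le\mu\le n$. Here $\delta_\mu$ is computed exactly for each finite $\mu$, because the arrival times are i.i.d.\ uniform and $\mathbb{P}(B_t=\emptyset)=(1-t)^{\mu-1}$ is exact. The monotonicity $\delta_\mu>\delta_{\mu+1}$ was shown for all $\mu\ge1$. Hence $\delta_\mu\ge\lim_{\nu\to\infty}\delta_\nu=\frac{e^2+1}{4e^2}$ for every finite $\mu$, and in particular for every $\mu\le n$.

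Combining the two cases, $\talg(\bX)\ge \frac{e^2+1}{4e^2}\opt(\bX)$ for all $\bX$, which yields $\rho_n\le\max_{\bX}\opt(\bX)/\talg(\bX)\le\frac{4e^2}{e^2+1}$.

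The only subtle point is the tie-breaking and relabeling assumption ($X_1>\dots>X_n$, all values distinct). I would note that Algorithm~\ref{alg:our} uses only comparisons, so a universal tie-breaking rule reduces general $\bX$ to this case without changing $\opt$ or $\talg$. No genuine obstacle is expected.
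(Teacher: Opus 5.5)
Your proposal is correct and takes the same route as the paper, which obtains the corollary immediately from Theorem~\ref{thm:tight} applied to Algorithm~\ref{alg:our}; that analysis (the exact formula for $\delta_\mu$ and the monotonicity $\delta_\mu>\delta_{\mu+1}$ for all $\mu\ge 1$) is valid for every finite $n$. Your explicit check that no asymptotic assumption enters that analysis is a reasonable addition, since the paper leaves it implicit.
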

\subsection{LP formulation: lower bounding the strong ratio}\label{sec:lower}

To establish a lower bound on the best strong competitive ratio, we may restrict our attention to a subset of instances. Any lower bound established for the subset is certainly a lower bound for the general problem.
In this subsection, we only consider the instances where \emph{the seller consistently offers a price of $0$}, for which w.l.o.g.\ the intermediary always buys from the seller. 

Consequently (with a slight abuse of notation), the agent price vector is simplified to the buyer price vector $\bX = (X_1, \dots, X_n)$, where $X_1 > X_2 > \dots > X_n$. The offline optimal value is then $\opt(\bX) = X_1$. Each algorithm $\talg$, for the SPVT, is associated with a family $\mathcal F_{\talg}$ of functions $r_{i,j}:\mathbb{R}_+^j\rightarrow [0,1]$, $1\le i\le j\le n$, where  $r_{i,j}(x_1,\dots,x_j)$ is the probability that the algorithm \emph{stops} at the $j$-th buyer, i.e., the intermediary sells the item to the $j$-th  buyer, given that the seller is the $i$-th agent, the intermediary holds an item when the $j$-th buyer arrives, and  $x_k$ is the price offered by the $k$-th buyer for all $1\le k\le j$. 

While we have simplified the problem instances, the range of feasible algorithmic operations remains vast and complex to analyze. Therefore, we introduce restrictions on the algorithms themselves to facilitate analysis, while demonstrating that these restrictions do not affect the best achievable competitive ratio. Our objective is to prove that restricting algorithms to decisions based solely on whether the current agent's price is the highest discovered so far does not change the best competitive ratio. 

To formalize this, we introduce the notion of value-independence. 
Given $\epsilon > 0$, $V \subseteq \mathbb{N}$, and $1 \le i \le j \le n$, an algorithm $\talg $ with $\mathcal F_{\talg}= \{r_{i',j'}: 1 \le i' \le j' \le n\}$ is $(\epsilon, i, j)$-\emph{value-independent} on $V$ if for any $k\in [j]$, there exists $q_{i,j,k} \in [0,1]$ such that for any distinct $x_1, \dots, x_j \in V$ with $k=|\{x_l:x_l\ge x_j\}|$, we have $r_{i,j}(x_1, \dots, x_j) \in [q_{i,j,k} - \epsilon, q_{i,j,k} + \epsilon]$.  We say that the algorithm $\talg$ is $\epsilon$-\emph{value-independent} on $V$ if it is $(\epsilon, i, j)$-value-independent on $V$ for all $1 \le i \le j \le n$. The following lemma shows that we only need to consider value-independent algorithms.

\begin{lemma}\label{lma:evo}
   For any $\epsilon>0$ and any algorithm $\talg$, there exists an  algorithm $\talg'$ and infinite set $V\subseteq \mathbb{N}$ such that $\talg'$ is $\epsilon$-value-independent on $V$, and $\talg(\bX)=\talg'(\bX)$ for all $\bX\subseteq V^n$.
\end{lemma}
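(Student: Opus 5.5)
The plan is a symmetrization step followed by an infinite Ramsey argument, in the style of classical lower-bound proofs for secretary problems.

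\emph{Step 1 (symmetrization).} The full order of $x_1,\dots,x_{j-1}$ is information we want to remove, since value-independence only allows dependence on the rank of $x_j$. From $\talg$ we build $\talg'$ whose stopping rule at the $j$-th buyer depends only on $i$, the \emph{set} $\{x_1,\dots,x_{j-1}\}$ and the current value $x_j$. Define $r'_{i,j}(x_1,\dots,x_j)$ as the conditional probability that $\talg$ stops at the $j$-th buyer, given four things:
\begin{itemize}
\item the seller is the $i$-th agent;
\item the set of the first $j-1$ buyer prices is $\{x_1,\dots,x_{j-1}\}$;
\item the $j$-th price is $x_j$;
\item $\talg$ has not stopped before the $j$-th buyer.
\end{itemize}
The conditioning is over the uniformly random order of the prefix, weighted by $\talg$'s continuation probabilities. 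If the conditioning event has probability zero, set $r'_{i,j}=0$.

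Arrivals are uniformly random, so conditioned on the prefix set the prefix order is uniform. An induction on $j$ then shows that, for each price vector and each $i$, $\talg$ and $\talg'$ give the same probability of stopping at the $j$-th buyer jointly with that buyer's identity. Hence $p_l^{\talg}(\bX)=p_l^{\talg'}(\bX)$ for every buyer $l$, and $\talg(\bX)=\talg'(\bX)$ for all $\bX$, not just those in $V^n$. I expect this to be the main obstacle. The identity $\talg(\bX)=\talg'(\bX)$ must be checked carefully: the joint law of (stopping index, identity of the stopped buyer) must match, not just the marginal stopping probabilities. I would verify it via a telescoping product over $j$ of continuation probabilities.

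\emph{Step 2 (coloring).} Partition $[0,1]$ into $m=\lceil 1/\epsilon\rceil$ intervals of length at most $\epsilon$. For $1\le j\le n$ and a $j$-subset $S\subseteq\mathbb{N}$, write $S$ in increasing order. Color $S$ by the vector that records, for every $i\le j$ and every choice of which element of $S$ plays the role of $x_j$ (equivalently its rank $k\in[j]$), the interval containing $r'_{i,j}$. Here the remaining elements form the prefix set, which is well defined because $r'_{i,j}$ is symmetric in the prefix. This uses at most $m^{j^2}$ colors, a finite number.

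\emph{Step 3 (Ramsey).} Apply the infinite Ramsey theorem for $j$-uniform hypergraphs successively for $j=1,\dots,n$. Each time, pass to an infinite subset on which all $j$-subsets receive the same color, and at the end obtain an infinite $V\subseteq\mathbb{N}$. Now fix $i\le j$ and $k$, and let $q_{i,j,k}$ be the midpoint of the common interval. For distinct $x_1,\dots,x_j\in V$ with $k=|\{x_l:x_l\ge x_j\}|$, we have $r'_{i,j}(x_1,\dots,x_j)\in[q_{i,j,k}-\epsilon,q_{i,j,k}+\epsilon]$. So $\talg'$ is $\epsilon$-value-independent on $V$, and by Step~1 $\talg(\bX)=\talg'(\bX)$ for all $\bX\in V^n$.
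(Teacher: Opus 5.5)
Your proposal is correct and follows the paper's route. The symmetrization $r'_{i,j}=\mathbb{P}(\tau=j\mid \tau\ge j,\ [x_1,\dots,x_j]_i)$ with an induction on $j$ is exactly Lemma~\ref{lma:order}; note that the induction there runs through $\mathbb{P}(\tau\ge j\mid[x_1,\dots,x_j]_i)=1-\sum_{k<j}\mathbb{P}(\tau=k\mid[x_1,\dots,x_j]_i)$ rather than a telescoping product along a fixed order, since $\talg$ and $\talg'$ agree only after averaging over prefix orders. This is followed, as in your plan, by infinite Ramsey on $j$-uniform hypergraphs, with colors recording the discretized stopping probability for each choice of the current element. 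The differences are cosmetic: you bundle all $i\le j$ into one coloring per $j$ (finitely many, $m^{j^2}$, colors) rather than looping over pairs $(i,j)$, and you use intervals of width $\epsilon$ instead of $2\epsilon$.
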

While the correctness of Lemma~\ref{lma:evo} is quite intuitive, its formal proof is quite involved. The high-level proof idea draws inspiration from \cite{correa2022prophet}, utilizing Ramsey's theorem on infinite hypergraphs as a tool to narrow down the price vector space. However, adapting this technique to our setting requires careful handling of additional structural details. Consequently, we defer the proof of Lemma~\ref{lma:evo} to Appendix~\ref{apx:lma:evo}.

Inspired by \cite{buchbinder2014secretary,correa2023sample}, we formulate the search for an optimal value-independent strategy as the following linear program.
\begin{alignat}{4}
    & \max         & \quad & \sum_{i=1}^{n} \sum_{j=i}^{n} \frac{1}{n+1}\cdot \frac{j}{n}\cdot x_{i,j}  & \quad & \label{obj:discrete}\\
    & \text{ s.t.} &       & j\cdot x_{i,j}\le 1-\sum_{k=i}^{j-1} x_{i,k}  &       &\forall\, 1 \le i \le j \le n \label{upperq:discrete}\\
    &              &       & x_{i,j}\ge 0   &       &\forall\, 1 \le i \le j \le n    \label{prob:discrete}
\end{alignat}

\begin{lemma}\label{lma:pb1}
   Given any $0$-value-independent algorithm $\talg$ on $V$ and price vector $\bX\in V^n$, for all $1\le i\le j\le n$, let $C_i$ be the event that the seller is the $i$-th agent, and let $D_j$ be the event that under $\talg$ and $\bX$ the intermediary sells the item to the $j$-th buyer whose price is higher than all preceding buyers. Let $x_{i,j}=\mathbb{P}(D_j|C_i)$, $1\le i\le j\le n$. Then $x$ is a feasible solution to the linear program \eqref{obj:discrete}--\eqref{prob:discrete}, and its corresponding objective value, defined in  \eqref{obj:discrete}, equals $p_1^{\talg}(\bX)$.
\end{lemma}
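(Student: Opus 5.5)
Here $n$ buyers and the seller (price $0$) arrive uniformly at random; condition on $C_i$, which has probability $\frac{1}{n+1}$ for each $i\in[n+1]$. If $i=n+1$ the intermediary buys only at the very end and cannot sell, so the item never reaches buyer $1$. If $i\le n$, the seller arrives right after the $(i-1)$-th buyer, so the $j$-th buyer can be sold to only when $j\ge i$. Under $C_i$ the relative order of the buyers is still a uniform random permutation, independent of $C_i$. The plan is to verify two things in turn: the objective identity and the constraints \eqref{upperq:discrete}.

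\textbf{Objective.} Buyer $1$ (the highest price) ends with the item iff the intermediary sells to the buyer in position $j$ for some $j\ge i$ and that buyer is buyer $1$. The sale to position $j$ can only go to buyer $1$ if that buyer is a prefix maximum, so this is contained in $D_j$. I will then show $\mathbb{P}(\text{buyer }j\text{ is the overall max}\mid D_j,C_i)=j/n$. The reason is that the algorithm's decisions up to position $j$ depend only on the first $j$ prices. By $0$-value-independence on $V$, those decisions in fact depend only on the relative ranks of the first $j$ prices (the $r_{i',j'}$ equal $q_{i',j',k}$ exactly).

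With this in hand, the event $D_j$ is a function of the relative order of the first $j$ buyers plus the algorithm's internal randomness. That relative order is independent of which position among $1,\dots,n$ holds the overall maximum, given that the max of the first $j$ sits at position $j$. Concretely, $\mathbb{P}(\text{global max at }j\mid \text{prefix max at }j,\ \text{relative order of first }j)=\frac{1/n}{1/j}=j/n$, by the standard secretary symmetry. Summing over $i,j$ with weight $\frac{1}{n+1}$ then gives $p_1^{\talg}(\bX)=\sum_{i}\sum_{j}\frac{1}{n+1}\frac{j}{n}x_{i,j}$.

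\textbf{Constraints.} Fix $i\le j$. The event $D_j$ requires the item to still be held when buyer $j$ arrives, i.e.\ no sale at positions $i,\dots,j-1$; call this event $H_j$. It also requires buyer $j$ to be a prefix maximum, event $M_j$. The plan has three steps:
\begin{itemize}
\item Argue that $H_j$ depends only on the relative order of the first $j-1$ buyers and internal randomness.
\item Conclude that $H_j$ is independent of $M_j$, which has probability $1/j$, so $\mathbb{P}(D_j\mid C_i)\le \mathbb{P}(H_j\cap M_j\mid C_i)=\mathbb{P}(H_j\mid C_i)/j$.
\item Bound $\mathbb{P}(H_j\mid C_i)\le 1-\sum_{k=i}^{j-1}\mathbb{P}(\text{sale at }k\mid C_i)$.
\end{itemize}

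The last step needs $\mathbb{P}(\text{sale at }k\mid C_i)\ge x_{i,k}$, and this is where care is needed. Sales at non-record buyers are allowed and also count against holding, so using only the record sales $x_{i,k}$ still gives a valid upper bound on $\mathbb{P}(H_j\mid C_i)$. The sale events at distinct $k$ are disjoint, which is what makes the subtraction legitimate. Multiplying through by $j$ yields $j\,x_{i,j}\le 1-\sum_{k=i}^{j-1}x_{i,k}$, and nonnegativity of $x$ is immediate.

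\textbf{Main obstacle.} The delicate point is making the independence statements rigorous. I would formalize the algorithm as a measurable function of the rank sequence plus an independent uniform random seed, using that the $r$'s are exact functions of ranks on $V$. Then the standard fact that the relative ranks $(k_1,\dots,k_n)$ of a uniform permutation are independent, with $k_j$ uniform on $[j]$, delivers both the $1/j$ factor in the constraints and the $j/n$ factor in the objective.
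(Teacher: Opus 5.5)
Your proposal is correct and follows essentially the same route as the paper. Both arguments bound $x_{i,j}$ by $\mathbb{P}(D'_j\cap D''_j\mid C_i)=\mathbb{P}(D'_j\mid C_i)\,\mathbb{P}(D''_j\mid C_i)\le\frac{1}{j}\bigl(1-\sum_{k=i}^{j-1}x_{i,k}\bigr)$, and both obtain the objective from $\mathbb{P}(E_j\mid C_i\cap D_j)=\mathbb{P}(E_j\mid C_i\cap D'_j)=j/n$ via $0$-value-independence. Your version is a bit more explicit than the paper's: it states that the independence of the holding event and the record event itself needs value-independence, and that non-record sales only make the bound weaker, so it remains valid.
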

\begin{proof}
    Let $D'_j$ be the event that the $j$-th  buyer offers a higher price than all the preceding buyers. Let $D''_j$ be the event that the intermediary has an item when the $j$-th buyer arrives. Then for all $i\le j$,
    \[x_{i,j}=\mathbb{P}(D_j|C_i)\le \mathbb{P}(D'_j\cap D''_j|C_i)=\mathbb{P}(D'_j|C_i)\cdot \mathbb{P}(D''_j|C_i)\le \frac{1}{j}\cdot \left(1-\sum_{k=i}^{j-1} x_{i,k}\right),\]
    which implies that $x$ satisfies \eqref{upperq:discrete}.

    Let $E_j$ be the event that the $j$-th  buyer offers the highest price among all $n$ buyers. Then for all $i\le j$, by the 0-value independence, we have
    \[\mathbb{P}(E_j|C_i\cap D_j)=\mathbb{P}(E_j|C_i\cap D'_j)=\frac{\mathbb{P}(E_j|C_i)}{\mathbb{P}(D'_j|C_i)}=\frac{1/n}{1/j}=\frac{j}{n}.\]
    Therefore,
    \[p_1^{\talg}(\bX)=\sum_{i=1}^{n} \mathbb{P}(C_i) \sum_{j=i}^{n} \mathbb{P}(E_j|C_i\cap D_j)\cdot \mathbb{P}(D_j|C_i)=\sum_{i=1}^{n} \frac{1}{n+1} \sum_{j=i}^{n} \frac{j}{n}\cdot x_{i,j}.\qedhere\]
\end{proof}

To obtain the final bound, we evaluate the optimal value of this program through its dual formulation.

\begin{lemma}\label{lma:linear}
    The optimal value of the linear program \eqref{obj:discrete}--\eqref{prob:discrete} does not exceed $(e^2+1)/(4e^2) + o(1)$.
\end{lemma}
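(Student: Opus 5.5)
The linear program \eqref{obj:discrete}--\eqref{prob:discrete} separates over the seller position $i$. The variables $x_{i,\cdot}$ for different $i$ never share a constraint, and the objective is a sum over $i$. So the optimal value equals $\frac{1}{n+1}\sum_{i=1}^n V_i$, where $V_i$ is the optimum of the one-row program
\[
\max \sum_{j=i}^n \frac{j}{n}x_{i,j}\quad\text{s.t.}\quad j\,x_{i,j}\le 1-\sum_{k=i}^{j-1}x_{i,k},\ x_{i,j}\ge 0 .
\]
This is exactly the Buchbinder--Jain--Singh secretary LP, with the extra restriction that stopping is allowed only from index $i$ on. I plan to show $V_i\le \max\{1/e,\ (i/n)\ln(n/i)\}+o(1)$ uniformly in $i$. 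I will then sum: $\frac1{n+1}\sum_i V_i$ becomes a Riemann sum for
\[
\int_0^{1/e}\frac1e\,\ds+\int_{1/e}^1 s\ln\frac1s\,\ds=\frac1{e^2}+\Bigl(\frac14-\frac{3}{4e^2}\Bigr)=\frac{e^2+1}{4e^2}.
\]
This matches the value $\lim_\nu\delta_\nu$ from Theorem~\ref{thm:tight}, which is a useful sanity check.

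To bound $V_i$, I will use weak duality. Give each constraint $(i,j)$ a multiplier $y_{i,j}\ge 0$. The dual constraint for the variable $x_{i,j}$ is
\[
j\,y_{i,j}+\sum_{l=j+1}^n y_{i,l}\ \ge\ \frac{j}{n},
\]
and the dual objective is $\sum_{j\ge i}y_{i,j}$. Let $\tau_i=\max\{i,\lceil n/e\rceil\}$. I set $y_{i,j}=0$ for $j<\tau_i$, and for $j\ge\tau_i$ I set
\[
y_{i,j}=\frac1n\Bigl(1-\sum_{l=j}^{n-1}\frac1l\Bigr).
\]
These values are nonnegative because $\sum_{l=j}^{n-1}1/l\le \ln\frac{n-1}{j-1}\le 1+o(1)$ when $j\ge n/e$; a negligible adjustment of $\tau_i$ absorbs the $o(1)$ if necessary. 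A backward-induction telescoping check shows that, for $j\ge\tau_i$, the quantity $j\,y_{i,j}+\sum_{l>j}y_{i,l}$ equals exactly $j/n$. Indeed, the difference between consecutive left-hand sides is $j(y_{i,j}-y_{i,j+1})+y_{i,j+1}-y_{i,j+1}$, which is $1/n$, and at $j=n$ both sides equal $1$.

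For $j<\tau_i$ (this case only occurs when $i<\lceil n/e\rceil$), the left-hand side is the tail sum $\sum_{l\ge\tau_i}y_{i,l}$. By the equality at $j=\tau_i$, this sum equals $\tau_i/n-(\tau_i-1)y_{i,\tau_i}\ge \tau_i/n-o(1)$, since $y_{i,\tau_i}=O(1/n)\cdot o(1)$ at the optimal threshold. This is at least $j/n-o(1)$. The $o(1)$ slack will be handled by scaling $y$ by $1+o(1)$. Finally, the dual value is $\sum_{l\ge\tau_i}y_{i,l}\approx (\tau_i/n)\ln(n/\tau_i)$, which gives the claimed bound on $V_i$.

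The main obstacle is bookkeeping the error terms uniformly in $i$ and $n$. The comparison of harmonic sums with logarithms, the sign of $y_{i,\tau_i}$ near $n/e$, and the Riemann-sum approximation each contribute $O(1/n)$ or $O(\log n/n)$ errors. I must ensure these accumulate only to $o(1)$ after averaging over $i$. My first attempt is to bound every error crudely by $O(1/n)$ per row, which sums to $O(1/n)$ overall after the factor $\frac1{n+1}$.
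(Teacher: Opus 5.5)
Your proof is correct and is essentially the paper's: the paper uses the same dual solution, $y_{i,j}=\max\{a_j,0\}$ with $a_j=\frac{1}{n(n+1)}\bigl(1-\sum_{k=j}^{n-1}\frac1k\bigr)$ for every row $i$. This is your row-wise dual up to the factor $\frac{1}{n+1}$, except that the cutoff sits exactly where $a_j$ changes sign; then $\max\{a_j,0\}\ge a_j$ gives feasibility outright, and your adjustment of $\tau_i$ and the $1+o(1)$ rescaling become unnecessary. The paper also sums the same double sum in the other order, obtaining $\sum_j j\max\{a_j,0\}\to\int_{1/e}^1 x(1+\ln x)\dx$, which equals your integral $\int_0^{1/e}\frac1e\ds+\int_{1/e}^1 s\ln\frac1s\ds$ by Fubini.

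One slip to fix. Since $x\ln(1/x)\le 1/e$ on $(0,1]$, your stated row bound $\max\{1/e,(i/n)\ln(n/i)\}$ is identically $1/e$, which would only give $1/e$ overall. What your construction proves, and what your integral actually uses, is $\max_{x\in[i/n,1]}x\ln(1/x)$, i.e.\ $1/e$ for $i\le n/e$ and $(i/n)\ln(n/i)$ for $i>n/e$.
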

\begin{proof}
To provide an upper bound for the optimal value of the linear program \eqref{obj:discrete}--\eqref{prob:discrete}, we consider its dual linear program:  
\begin{alignat}{4}
& \min         & \quad & \sum_{i=1}^{n} \sum_{j=i}^{n} y_{i,j}  & \quad & \label{obj:dual}\\
& \text{ s.t.} &       & j\cdot y_{i,j}+\sum_{k=j+1}^{n} y_{i,k}\ge \frac{j}{(n+1)\cdot n}  &       &\forall\, 1 \le i \le j \le n \label{upperq:dual}\\
&              &       & y_{i,j}\ge 0   &       &\forall\, 1 \le i \le j \le n    \label{prob:dual}
\end{alignat}
    
By the duality theory of linear programming, it suffices to prove that the dual program \eqref{obj:dual}--\eqref{prob:dual} has a feasible solution with an objective value of $(e^2+1)/(4e^2)+o(1)$.  
We construct a sequence $a_j$, $j=1,2,\ldots,n$ as follows:  
\[
a_j = \frac{1}{(n+1)\cdot n}\left(1 - \sum_{k=j}^{n-1} \frac{1}{k}\right).
\]  

We claim that $y_{i,j} = \max\{a_j, 0\}$ is a feasible solution to the dual program \eqref{obj:dual}--\eqref{prob:dual}. Clearly, $y_{i,j}$ satisfies \eqref{prob:dual}. To verify the validity of \eqref{upperq:dual}, we note that  
\[
\begin{aligned}
    (n+1)\cdot n \cdot \left(j \cdot y_{i,j} + \sum_{k=j+1}^{n} y_{i,k}\right) 
    &\geq j - \sum_{l=j}^{n-1} \frac{j}{l} + \sum_{k=j+1}^{n} \left(1 - \sum_{l=k}^{n-1} \frac{1}{l}\right) \\
    &= j + (n-j) - \sum_{l=j}^{n-1} \frac{j + (l-j)}{l} \\
    &= j.
\end{aligned}
\]
So $y_{i,j}$ satisfies \eqref{upperq:dual}, establishing the feasibility.  

Let $j^* = \lceil 1 + (n-1)/e \rceil$. For any $j \geq j^*$,  by the definition of the definite integral,  
$ 1 - \sum_{k=j}^{n-1} \frac{1}{k} \geq 1 - \int_{j-1}^{n-1} \frac{1}{x} \, \mathrm{d}x = 1 + \ln \frac{j-1}{n-1} \geq 0,
$
which implies $a_j \geq 0$ and $y_{i,j}=a_j$ for all $j\ge j^*$.  Hence, for the objective value in \eqref{obj:dual}, we have  
\begin{align*}
    \sum_{i=1}^{n} \sum_{j=i}^{n} y_{i,j}&=\sum_{j=1}^{n} \sum_{i=1}^{j} y_{i,j}\\
    &\ge \sum_{j=j^*}^{n} j\cdot a_{j}\\
    &=\frac{1}{(n+1)\cdot n}\cdot \sum_{j=j^*}^{n} j\cdot \left(1-\sum_{k=j}^{n-1}\frac{1}{k}\right)\\
    &\ge \frac{1}{n+1} \sum_{j=j^*}^{n} \frac{j}{n}\cdot \left(1+\ln \frac{j-1}{n-1}\right)\\
    &=\int_{1/e}^{1} x(1+\ln x)\dx +o(1)\\
    &=\frac{e^2+1}{4e^2}+o(1).\qedhere
\end{align*}
\end{proof}

Lemmas \ref{lma:evo} -- \ref{lma:linear} establish a lower bound of $\frac{4e^2}{e^2+1}-o(1)$ on the strong competitive ratio. Notably, this bound holds even when the seller's price is fixed at $0$.
\begin{theorem} \label{thm:lower}
   For any algorithm $\talg$ of the SPVT and any $\delta>0$, there exists $n_0$ such that, for all $n\ge n_0$, there exists a price vector ${\bX}'\in\mathbb{N}^n$ satisfying 
    \[\frac{\opt(\bX')}{\talg({\bX}')}\ge \frac{4e^2}{e^2+1}-\delta.\]
\end{theorem}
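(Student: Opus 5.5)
Restrict to instances in which the seller's price is $0$, so $\opt(\bX)=X_1$ and $\talg(\bX)\le X_1 p_1^{\talg}(\bX)+X_2(1-p_1^{\talg}(\bX))$. The plan is to show that for every algorithm there is an infinite set $V$ on which $\talg$ behaves, up to $\epsilon$, like a $0$-value-independent algorithm. Then the LP bound of Lemma~\ref{lma:linear} caps $p_1$ at roughly $(e^2+1)/(4e^2)$. Finally, we choose prices in $V$ that are spread out enough that the non-top buyers contribute negligibly compared with $X_1$.

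\textbf{Step 1 (value-independence).} Fix $\delta>0$ and take $n_0$ large enough that the $o(1)$ term in Lemma~\ref{lma:linear} is below $\delta'$, where $\delta'$ is a small constant chosen in terms of $\delta$. Pick $\epsilon$ much smaller than $1/n^2$. Lemma~\ref{lma:evo} gives $\talg'$ and an infinite $V\subseteq\mathbb{N}$ such that $\talg'$ is $\epsilon$-value-independent on $V$ and $\talg=\talg'$ on $V^n$. Now replace each $r_{i,j}$ by the constant $q_{i,j,k}$, where $k$ is the rank of the current price. This defines a $0$-value-independent algorithm $\talg''$. A coupling argument, comparing the stopping decisions step by step, shows that $p_1^{\talg'}(\bX)$ and $p_1^{\talg''}(\bX)$ differ by at most $n^2\epsilon$, since there are at most $n$ decisions per order and each probability moves by at most $\epsilon$. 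By Lemma~\ref{lma:pb1} and Lemma~\ref{lma:linear}, $p_1^{\talg''}(\bX)\le (e^2+1)/(4e^2)+\delta'$. Hence $p_1^{\talg}(\bX)\le (e^2+1)/(4e^2)+\delta'+n^2\epsilon$ for every $\bX\in V^n$.

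One subtlety: $\talg''$ must not stop at a non-record buyer, since Lemma~\ref{lma:pb1}'s $D_j$ only counts sells to record buyers. This causes no problem for bounding $p_1$, because selling to a non-record buyer never yields $X_1$. The LP constraint still holds, because record sells only use up part of the probability of still holding the item.

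\textbf{Step 2 (choosing $\bX'$).} Because $V$ is infinite, pick $X_n<\dots<X_2$ in $V$ arbitrarily, then pick $X_1\in V$ with $X_1\ge X_2/\delta'$. The value-independent bound of Step 1 holds for any such choice, since the rank pattern is all that matters. Then
\[
\talg(\bX')\le X_1\bigl(p_1^{\talg}(\bX')+\delta'\bigr)\le X_1\Bigl(\tfrac{e^2+1}{4e^2}+3\delta'\Bigr),
\]
so $\opt/\talg\ge 1/\bigl((e^2+1)/(4e^2)+3\delta'\bigr)\ge \frac{4e^2}{e^2+1}-\delta$ for suitable $\delta'$.

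\textbf{Main obstacle.} The hardest step is the perturbation in Step 1. The $\epsilon$-closeness of the $r_{i,j}$ must be turned into closeness of $p_1$, uniformly over the seller position and the arrival orders. A cruder alternative, if the coupling proves delicate, is to reprove Lemma~\ref{lma:pb1} directly in approximate form. In that version the identity $\mathbb{P}(E_j\mid C_i\cap D_j)=j/n$ becomes $j/n\pm O(n\epsilon)$, which perturbs the LP objective by $O(n^2\epsilon)$ and again vanishes since $\epsilon$ is chosen after $n$.
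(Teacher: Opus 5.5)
Your proposal is correct and follows essentially the same route as the paper: Lemma~\ref{lma:evo} gives an $\epsilon$-value-independent surrogate, you round it to a $0$-value-independent algorithm, Lemmas~\ref{lma:pb1}--\ref{lma:linear} cap $p_1$ near $(e^2+1)/(4e^2)$, and a spread-out price vector ($X_1\gg X_2$) makes $\talg\approx X_1 p_1$. The only difference is a slightly cleaner rounding step: you keep the rank-dependent constants $q_{i,j,k}$ for all ranks, whereas the paper sets the non-record hazards to $0$, for which only the one-sided bound $p_1^{\talg'}\le p_1^{\talg''}+n\epsilon$ actually holds (though that bound is all the paper needs). You also correctly observe that Lemma~\ref{lma:pb1} still applies when $\talg''$ may sell to non-record buyers.
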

\begin{proof}
    By Lemma \ref{lma:evo}, there exists an algorithm $\talg'$ with $\mathcal F_{\talg'}=\{r_{i,j}:1\le i\le j\le n\}$ and infinite set $V\subseteq \mathbb{N}$ such that $\talg'$ is $\epsilon$-value-oblivious on $V$, and $\talg(\bX)=\talg'(\bX)$ for all $\bX\subseteq V^n$. By definition, for all $1\le i\le j\le n$, there exists $q_{i,j,1}\in [0,1]$ such that for all pairwise distinct $x_1,\dots,x_j\in V$ with $x_j> \max\{x_1,\dots,x_{j-1}\}$, it holds that $r_{i,j}(x_1,\dots,x_j)\in [q_{i,j,1}-\epsilon,q_{i,j,1}+\epsilon]$. We can define a 0-value-oblivious algorithm $\talg''$ on $V$ by $\mathcal F_{\talg''}=\{r'_{i,j}:1\le i\le j\le n\}$ such that $r'_{i,j}(x_1,\dots,x_j)=q_{i,j,1}$ if $x_j> \max\{x_1,\dots,x_{j-1}\}$ and $r'_{i,j}(x_1,\dots,x_j)=0$ otherwise.
    
    Since $V$ is an infinite subset of $\mathbb{N}$, we can select $n$ distinct numbers $X'_1, X'_2, \dots, X'_n \in V$ such that $X'_1 > nX'_2$ and $X'_2 > X'_3 > \dots > X'_n$. Taking $\bX'=(X'_1,X'_2,\ldots,X'_n)$, by Lemmas \ref{lma:pb1} and \ref{lma:linear}, we have 
    \[p_1^{\talg''}(\bX') \le (e^2+1)/(4e^2) + o(1).\] 
    Let $\epsilon = 1/n^2$. Since $|p_1^{\talg'}(\bX') - p_1^{\talg''}(\bX')| \le n\epsilon$, it follows that 
    \[p_1^{\talg'}(\bX') \le (e^2+1)/(4e^2) + o(1).\] 
    Therefore,

    \[\frac{\opt(\bX')}{\talg(\bX')}\ge \frac{X'_1}{X'_1\cdot p_1^{\talg'}(\bX')+X'_2}=\frac{1}{p_1^{\talg'}(\bX')}-o(1)=\frac{4e^2}{e^2+1}-o(1).\qedhere\]
\end{proof}

It follows from Theorem~\ref{thm:lower} that $ \lim_{n \to \infty} \rho_n \ge \frac{4e^2}{e^2+1}$, which in combination with Corollary~\ref{cor:upper} yields 
\[\rho_n \le \lim_{\eta \to \infty} \rho_\eta = \frac{4e^2}{e^2+1} \approx 3.523,\] 
and shows that Algorithm~\ref{alg:our} achieves a strong competitive ratio that is the best possible.

\section{Weak competitive ratios}\label{sec:weak}
Having established the tight strong competitive ratio in the previous section, we now shift our focus to algorithm design for minimizing the weak competitive ratio. Consistent with the convention used earlier, throughout this section, unless explicitly stated otherwise, all references to offline optima and competitive ratios will pertain to their weak versions. We will often omit the modifier ``weak'' for brevity within this section.

First, let us first provide the formula for the weak offline optimal value $\opt(\bX)$. Let $\bX = (X_1, X_2, \dots, X_{n+1})$ be the price vector with $X_1 > X_2 > \dots >X_\mu>X_{n+1} > X_{\mu+1}>\dots> X_n$. 
Let $p_i^{\opt}(\bX)$ denote the probability that the optimal weak offline solution $\opt$ ends up with agent $i$ holding the item. We now compute these probabilities based on the random arrival order.

For $i\in [\mu]$, the optimal social welfare is $X_i$ (i.e., $\opt = X_i$) if and only if $X_i$ is the highest-valued buyer that arrives after the seller. This requires all better buyers ($X_1, \dots, X_{i-1}$) to arrive before the seller, and $X_i$ to arrive after the seller. Among the set of these $i+1$ agents, the probability of this specific relative ordering is $\frac{1}{i(i+1)}$ (i.e., $p_i^{\opt}(\bX) = \frac{1}{i(i+1)}$). For all buyers $i$ with valuations lower than the seller ($i \in [n]\setminus [\mu]$), they will never be selected in an optimal solution, so $p_i^{\opt}(\bX) = 0$.
The optimal value is $X_{n+1}$ (i.e., $\opt = X_{n+1}$) if and only if no welfare-improving trade is possible. This occurs when the seller arrives after all $\mu$ high-valued buyers. Among these $\mu+1$ agents, the probability of the seller arriving last is $\frac{1}{\mu+1}$ (i.e., $p_{n+1}^{\opt}(\bX) = \frac{1}{\mu+1}$). Therefore, the expected offline optimum is
\begin{align*}
    \opt(\bX)&=\sum_{i=1}^{\mu} p_i^{\opt}(\bX) \cdot X_i + p_{n+1}^{\opt}(\bX) \cdot X_{n+1}\\
    &=\sum_{i=1}^\mu \frac{1}{i(i+1)}\cdot X_i+\frac{1}{\mu+1}\cdot X_{n+1}.
\end{align*}

A key motivation for this separate analysis is the fact that an algorithm optimal under the strong competitive ratio is not necessarily optimal in the weak sense. For instance, the weak competitive ratio of Algorithm~\ref{alg:our} is higher than that of Algorithm~\ref{alg:our2} (presented in Section \ref{sec:weakupper}), even though Algorithm \ref{alg:our} is optimal with respect to the strong competitive ratio.
We illustrate this by considering the following two extreme cases. For the convenience of the subsequent proof, we normalize the values of all $X_i$'s such that $X_i \in [0, 1]$. This operation does not affect the proof itself, but merely simplifies the analysis and description.

Consider the first case where \(X_{n+1} = 1\) and \(X_1 = \dots= X_n = 0\). The offline optimum \(\opt(\bX)\) is clearly 1. For Algorithm~\ref{alg:our}, the social welfare is 1 if and only if the intermediary does not purchase, which happens when the seller arrives after time $(e-1)/e$ (with probability $1/e$). In all other scenarios, the intermediary purchases but cannot resell, yielding a welfare of 0. Thus, the expected algorithmic outcome is \(\talg(\bX) = 1/e\), demonstrating a weak competitive ratio of $e$ for this instance, which is greater than 2.

Consider a second case where the top $k$ buyers have a value of 1 ($X_1=\dots=X_k=1$), while all other agents, including the seller, have a value of 0 ($X_{k+1}=\dots=X_{n+1}=0$). In this setting, the weak offline optimum is 1, unless the seller arrives after all $k$ valuable buyers, in which case the optimum is 0. The probability of the seller arriving last among these $k+1$ agents is $1/(k+1)$, so the expected weak optimum is $\opt(\bX) = \frac{k}{k+1}$, which approaches 1 as $k \to \infty$. For Algorithm~\ref{alg:our}, however, the welfare is 0 if the best buyer, $X_1$, arrives before the seller or within the buyer observation period (i.e., before time $1/e$). This leads to an expected welfare of $\talg(\bX) \le 1/2 - 1/(2e^2)$. Consequently, as $k \to \infty$, the weak competitive ratio is at least $2e^2/(e^2-1)$, which is greater than 2.
 \subsection{Upper bounding the weak ratio}\label{sec:weakupper}

While Algorithm~\ref{alg:our} is optimal for the strong ratio, its performance for the weak ratio is limited. Our analysis indicates that even with optimally tuned thresholds, the weak competitive ratio of Algorithm~\ref{alg:our} cannot be improved beyond $2$. This motivates us to consider a more direct approach, Algorithm~\ref{alg:our2}, which achieves a ratio of $2$ through a simpler randomized mechanism. 

\begin{algorithm}[H]
    \caption{Simple Random Transaction for SPVT}
    \label{alg:our2}

When a seller arrives, purchase the item if the seller's price is not the current highest offer; otherwise, purchase with probability $1/2$. 

When the current highest-offering buyer arrives, sell the item if available. 
\end{algorithm}

\begin{theorem}\label{thm:algweak}
    The weak competitive ratio of Algorithm \ref{alg:our2} is $2$.
\end{theorem}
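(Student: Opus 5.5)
The plan is to prove two things: that Algorithm~\ref{alg:our2} always collects at least half of the weak optimum, and that some instance forces exactly a factor $2$.

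\emph{Tightness.} I will reuse the first extreme instance above: $X_{n+1}=1$ and $X_1=\dots=X_n=0$. Since no buyer exceeds the seller, $\mu=0$ and the seller is always the current highest offer on arrival. So Algorithm~\ref{alg:our2} buys with probability exactly $1/2$, and after buying it can never resell for positive welfare. Hence $\talg(\bX)=1/2$ while $\opt(\bX)=1$. The same reasoning applies to the perturbed instance with buyers at tiny distinct positive prices, so the ratio $2$ is attained and cannot be improved.

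\emph{Upper bound: reformulation.} I use the notation of the weak-optimum formula, with $X_1>\dots>X_\mu>X_{n+1}>X_{\mu+1}>\dots$. The first step is to describe the algorithm's outcome exactly. Let $M$ be the largest price among the agents up to and including the seller. If the intermediary buys, the item goes to the first buyer after the seller whose price exceeds $M$, because that is precisely the first record after the purchase. Buyers below $X_{n+1}$ can never be records after the seller, so only buyers $1,\dots,\mu$ matter. The outcome is therefore a function of the relative order of the $\mu+1$ agents $\{1,\dots,\mu,n+1\}$ together with the fair coin used when the seller is a record, and all orders are equally likely.

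\emph{Upper bound: layer-cake comparison.} Rather than compare the probabilities $p_i$ and $p_i^{\opt}$ one agent at a time, I will compare the two outcomes level by level. Write $\talg(\bX)=\int_0^\infty \bp(\text{ALG welfare}\ge v)\,\mathrm dv$, and the same for $\opt$. A pointwise coupling fails: when $1,\dots,i-1$ precede the seller and $i$ follows it, $\opt$ gets $X_i$ but the algorithm gets nothing. It therefore suffices to prove, for each level, that $\bp(\talg\ge v)\ge \tfrac12\bp(\opt\ge v)$.

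For $v\in(X_{i+1},X_i]$ with $i\le\mu$, the weak-optimum formula gives $\bp(\opt\ge v)=\sum_{l\le i}\frac{1}{l(l+1)}=\frac{i}{i+1}$. For $v\in(X_{\mu+1},X_{n+1}]$ we have $\bp(\opt\ge v)=1$. So the target inequalities are:
\begin{itemize}
\item $\bp(\text{the final holder is among buyers } 1,\dots,i)\ge \frac{i}{2(i+1)}$ for every $i\le\mu$;
\item $\bp(\text{the final holder is among buyers } 1,\dots,\mu \text{ or the seller})\ge \tfrac12$.
\end{itemize}

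For the second inequality I will exhibit disjoint events. The seller is last among $\{1,\dots,\mu,n+1\}$ with probability $\frac1{\mu+1}$, and then the coin keeps the item with the seller half of the time. Otherwise the seller is not a record, the intermediary buys, and the item reaches a top-$\mu$ buyer exactly when some buyer after the seller beats the best earlier top-$\mu$ buyer. I expect this to hold with probability at least one half by a symmetry argument on the position of the overall maximum.

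For the first inequality I will condition on the relative order of $\{1,\dots,i,n+1\}$. The algorithm succeeds whenever the seller is bought and the first buyer exceeding $M$ after the seller lies in $\{1,\dots,i\}$. A sufficient event is that the best of $\{1,\dots,i\}$ arrives after the seller and after every buyer from $\{i+1,\dots,\mu\}$ that appears between the seller and it. I will bound this event below via a secretary-type computation: the maximum of the agents preceding that buyer must occur no later than the seller.

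I expect the main obstacle to be this top-$i$ level inequality. The lower-ranked buyers $i+1,\dots,\mu$ can intercept the item, and the computation must show that this interception loss is exactly compensated by the chance that $M$ is already large. My first attempt will be an induction on $i$, in the spirit of the telescoping identity $\sum_{l\le i}\frac1{l(l+1)}=\frac{i}{i+1}$. If that does not close, I will fall back on a per-agent comparison: compute $p_i^{\talg}(\bX)$ exactly and verify $\sum_{l\le i}p_l^{\talg}\ge\frac12\sum_{l\le i}p_l^{\opt}$ directly.
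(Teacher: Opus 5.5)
\paragraph{Verdict.} Your reduction to level-wise inequalities is valid, and your tightness instance is fine. However, the proposal stops exactly where the work lies: you never prove the level inequality $\bp(\text{holder}\in\{1,\dots,i\})\ge \frac{i}{2(i+1)}$. You only list strategies you might try.

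\paragraph{Why a lossy bound cannot work.} There is no slack anywhere. In fact $p_i^{\talg}(\bX)=\frac{1}{2i(i+1)}=\frac12 p_i^{\opt}(\bX)$ for every $i\le\mu$, and $\bp(\talg\ge X_{n+1})=\frac12$. So every one of your level inequalities holds with equality, and $\talg(\bX)=\opt(\bX)/2$ on every instance. Any ``sufficient event'' strictly smaller than the true success event therefore gives a bound strictly below $\frac{i}{2(i+1)}$, and the argument fails. You need an exact computation, not a secretary-type lower bound.

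Your specific candidate event is not even sufficient. The best of $\{1,\dots,i\}$ is buyer $1$. Suppose the seller precedes all of buyers $1,\dots,\mu$, the coin says buy, and buyer $i+1$ is the next of these to arrive. Then the item goes to buyer $i+1$, even though buyer $1$ arrives after the seller.

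\paragraph{The second inequality.} There is a slip here too. The coin is flipped when the seller is \emph{first} among $\{1,\dots,\mu,n+1\}$, not last. When the seller is last, the algorithm buys for sure and ends with welfare $0$.

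Even after correcting ``last'' to ``first'', your disjoint events omit ``seller first and the coin says buy''. That event has probability $\frac{1}{2(\mu+1)}$ and always succeeds. The remaining event (seller not first, buyer $1$ after the seller) has probability $\frac12-\frac{1}{\mu+1}$, not at least $\frac12$. The clean statement, which is what the paper uses, is that the welfare is at least $X_{n+1}$ if and only if buyer $1$ arrives after the seller. That event has probability exactly $\frac12$.

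\paragraph{How to close the gap within your framework.} Let $r$ be the best buyer in $\{1,\dots,\mu\}$ arriving before the seller.
\begin{itemize}
\item $\bp(r)=\frac{1}{r(r+1)}$, since among $\{1,\dots,r,n+1\}$ buyer $r$ must come first and the seller second.
\item Given $r$, the intermediary buys, and the item goes to the first arrival among $\{1,\dots,r-1\}$. This succeeds at level $i$ with probability $1$ if $2\le r\le i$, and with probability $\frac{i}{r-1}$ if $r>i$.
\item If no such $r$ exists (probability $\frac{1}{\mu+1}$), the coin decides. On a purchase, the item goes to the first arrival among $\{1,\dots,\mu\}$.
\end{itemize}
Hence
\[
\bp(\text{holder}\in\{1,\dots,i\})=\sum_{r=2}^{i}\frac{1}{r(r+1)}+\sum_{r=i+1}^{\mu}\frac{i}{(r-1)r(r+1)}+\frac{1}{\mu+1}\cdot\frac12\cdot\frac{i}{\mu}=\frac{i}{2(i+1)} .
\]

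\paragraph{Comparison with the paper.} The paper does the per-agent version of this computation, which is essentially your fallback. It splits the event that $X_i$ gets the item into two cases:
\begin{itemize}
\item Some buyer of $\{i+1,\dots,\mu\}$ precedes the seller. An exchange argument gives probability $\frac12\bigl(\frac{1}{i(i+1)}-\frac{1}{\mu(\mu+1)}\bigr)$.
\item All of them follow buyer $i$. This gives probability $\frac{1}{2\mu(\mu+1)}$.
\end{itemize}
This yields $p_i^{\talg}=\frac{1}{2i(i+1)}$. Writing both welfares as $\sum_i p_i(X_i-X_{n+1})+\bp(\cdot\ge X_{n+1})X_{n+1}$ then shows the ratio is identically $2$.
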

\begin{proof}

We first consider the case where $X_{n+1} > X_1$. In this case, the optimal solution is to leave the item with the seller, yielding $\opt(\bX) = X_{n+1}$. For our algorithm, since the seller is always the highest bidder upon arrival, the intermediary purchases with probability $1/2$. If the intermediary purchases, the item can never be resold, as no buyer's price is higher than the seller's, resulting in a welfare of $0$. If the intermediary does not purchase, the welfare is $X_{n+1}$. Therefore, the expected welfare is $\talg(\bX) = X_{n+1}/2$. Thus,
\[
\frac{\opt(\bX)}{\talg(\bX)} = \frac{X_{n+1}}{X_{n+1}/2} = 2.
\]

If \( X_1 > X_{n+1} \), let the seller's price be the \((\mu+1)\)-th highest among all agents. Consequently, the \(n-\mu\) buyers with prices lower than the seller's will never be selected and can be ignored in the subsequent analysis.
For any \( i \leq \mu \), the algorithm selects \( X_i \) if and only if the following conditions are satisfied:

\begin{enumerate}
    \item \( X_{n+1} \) arrives before \( X_i \);
    \item \( X_1, \ldots, X_{i-1} \) all arrive after \( X_i \) (this ensures that $X_i$ is the current highest bidder);
    \item Among \( X_{i+1}, \ldots, X_{\mu} \), either:
        \begin{itemize}
            \item at least one buyer arrives before \( X_{n+1} \), or
            \item all arrive after \( X_{n+1} \), but the algorithm still chooses to purchase the item with probability \( 1/2 \) (this ensures the algorithm buys the item from the seller);
        \end{itemize}
    \item Among \( X_{i+1}, \ldots, X_{\mu} \), either:
        \begin{itemize}
            \item all arrive after \( X_i \), or
            \item at least one arrives before \( X_i \), and the highest bidder among them arrives before \( X_{n+1} \) (this ensures the algorithm does not sell the item before \( X_i \) arrives).
        \end{itemize}
\end{enumerate}

Clearly, it is impossible for all \(X_{i+1}, \dots, X_{\mu}\) to arrive after $X_{n+1}$ and before $X_{i}$. Therefore, all the above conditions can be classified into two disjoint cases in total. 

Case 1 is defined as follows:

\begin{enumerate}[label=\arabic*), nosep]
    \item \( X_{n+1} \) arrives before \( X_i \), and \( X_1, \ldots, X_{i-1} \) all arrive after \( X_i \);
    \item Among \( X_{i+1}, \ldots, X_{\mu} \), at least one buyer arrives before \( X_{n+1} \);
    \item Among the buyers in \( X_{i+1}, \ldots, X_{\mu} \) who arrive before \( X_i \), the highest bidder arrives before \( X_{n+1} \).
\end{enumerate}

Let $p_{i1}^{\talg}(\bX)$ denote the probability of obtaining $X_i$ under Case 1. The probability is computed through the following approach: First, calculate the probability that condition 1 is satisfied and at least one buyer among $X_{i+1}, \ldots, X_\mu$ arrives before $X_i$. Then, by performing an exchange between buyers arriving before $X_{n+1}$ and those arriving between $X_{n+1}$ and $X_i$, we can observe that exactly one of the two resulting configurations satisfies the case one. Therefore,
\begin{equation*}
p_{i1}^{\talg}(\bX) = \left(\frac{(i-1)!}{(i+1)!} - \frac{(\mu-1) !}{(\mu+1)!}\right) \cdot \frac{1}{2}=\left(\frac {1 }{i(i+1)} - \frac{1}{\mu(\mu+1)}\right) \cdot \frac{1}{2}.
\end{equation*}

Case 2 is defined as follows:

\begin{enumerate}[label=\arabic*), nosep]
    \item \( X_{n+1} \) arrives before \( X_i \), and \( X_1, \ldots, X_{i-1} \) all arrive after \( X_i \);
    \item  \( X_{i+1}, \ldots, X_{\mu} \) all arrive after \( X_{i} \);
    \item The intermediary decides to purchase the item.
\end{enumerate}

Let $p_{i2}^{\talg}(\bX)$ denote the probability of obtaining $X_i$ under Case 2. The probability that both conditions 1 and 2 are satisfied is $\frac{1}{\mu(\mu+1)} $, and the conditional probability that condition 3 holds given that conditions 1 and 2 are satisfied is $\frac{1}{2}$. Therefore,
\begin{equation*}
p_{i2}^{\talg}(\bX)=\frac{1}{2\mu(\mu+1)}.
\end{equation*}

Hence,
\begin{equation*}
p_{i}^{\talg}(\bX) = p_{i1}^{\talg}(\bX) + p_{i2}^{\talg}(\bX) = \frac{1}{2i(i+1)},  \end{equation*} 
which holds for any \( i \in [\mu] \).

We can determine the probability of $\talg< X_{n+1}$ by analyzing the two possible relative arrival orders of $X_1$ and $X_{n+1}$. First, we consider that $X_1$ arrives before $X_{n+1}$.  When the seller arrives, the highest price seen is $X_1$. Since $X_{n+1} < X_1$, the seller is not the current highest, so the intermediary buys the item. However, because the globally best buyer $X_1$ has already passed, a resale is impossible. The final welfare is $\text{ALG}=0$, satisfying $\talg< X_{n+1}$. Second, we consider that $X_1$ arrives after $X_{n+1}$.  In this case, the final welfare is always at least $X_{n+1}$. This is because the intermediary either refrains from purchasing from the seller (yielding $\text{ALG}=X_{n+1}$) or purchases the item and eventually resells it to a buyer with a value greater than $X_{n+1}$ (such as $X_1$ itself). Thus, the event $\talg< X_{n+1}$ occurs if and only if $X_1$ arrives before $X_{n+1}$. Given the uniformly random arrival order, this occurs with probability $1/2$. Consequently, \( \mathbb{P}(\talg< X_{n+1}) = 1/2\).
Hence,
\begin{align*}
    \frac{\opt(\bX)}{\talg(\bX)} &= \frac{\sum_{i=1}^{\mu} p_{i}^{\opt}(\bX) \cdot X_i + p_{n+1}^{\opt}(\bX) \cdot X_{n+1}}{\sum_{i=1}^\mu p_{i}^{\talg}(\bX) \cdot X_i + p_{n+1}^{\talg}(\bX) \cdot X_{n+1}}\\
    &= \frac{\sum_{i=1}^{\mu} p_{i}^{\opt}(\bX)\cdot (X_i-X_{n+1})+\bp(\opt\ge X_{n+1})\cdot X_{n+1}}{\sum_{i=1}^{\mu} p_{i}^{\talg}(\bX)\cdot (X_i-X_{n+1})+\bp(\talg\ge X_{n+1})\cdot X_{n+1}}\\
    &= \frac{\sum_{i=1}^\mu \frac{1}{i(i+1)}\cdot (X_i-X_{n+1}) + X_{n+1}}{\sum_{i=1}^\mu \frac{1}{2i(i+1)}\cdot (X_i-X_{n+1}) + \frac{1}{2} \cdot X_{n+1}}\\
    &=2. \qedhere
\end{align*}
\end{proof}

\subsection{Double-threshold algorithm for a zero-price seller}
In this subsection, we design a 1.83683-weakly competitive algorithm for the special case of SPVT, denoted as $\text{SPVT}_0$, where the seller's price is fixed at $0$. 
In this case, the intermediary will always buy the item from the seller. This case can be considered a variant of the sample-driven secretary problem, differing from the traditional setting where the sample size is fixed. In our case, the sample size is effectively determined by the seller's uniformly random arrival time.

We now introduce a more sophisticated strategy, the double-threshold algorithm, which adds a second criterion: it may also accept a buyer who is the second-best-so-far buyer. This strategy of accepting a second-ranked bidder is generally counterproductive for the strong competitive ratio, where the benchmark always secures the global maximum. However, it becomes relevant under the weak competitive ratio. The weak offline optimum is constrained by the arrival sequence. Consequently, the value represented by the second rank can be significant relative to this weaker benchmark. Worst-case inputs might involve $X_2$ being very close to $X_1$. Algorithm~\ref{alg:our3} addresses this by potentially accepting the second-best-so-far buyer, but only if they arrive after a later time threshold $t_2$. This allows the algorithm to capture potentially high value from the second rank, particularly late in the process when the chance of encountering and selecting the best candidate has diminished.

\begin{algorithm}[ht]
    \caption{Double-threshold Transaction for $\text{SPVT}_0$} 
    \label{alg:our3}

     Let $T_1, T_2, \ldots, T_{n+1}$ be $n + 1$ independent random variables uniformly distributed on $[0, 1]$. Sort them to obtain the order statistics: $T_{\sigma(1)} < T_{\sigma(2)} < \dots< T_{\sigma(n+1)}$. We refer to $T_{\sigma(i)}$ as the arrival time of the $i$-th participant\;
     When the seller arrives, the intermediary immediately buys the item\;

    After the time the intermediary buys from the seller, let $b$ be the earliest buyer satisfying one of the following conditions   ($t_1$ and $t_2$ are constants that will be determined later)
    \begin{enumerate}
        \item $b$ arrives after $t_1$, and $b$ is the best-so-far buyer;
        \item $b$ arrives after $t_2$, and $b$ is the second-best-so-far buyer.
    \end{enumerate}

    \textbf{If} such a buyer $b$ exists, \textbf{then} the intermediary sells the item to $b$\;
\end{algorithm}

To provide intuition for the performance of Algorithm~\ref{alg:our3}, we outline its asymptotic behavior as the number of buyers $n \to \infty$. Our analysis demonstrates that the worst-case weak competitive ratio is dominated by two extreme price configurations: either $X_1=1$ with all other values being $0$, or $X_1=X_2=\dots=X_n=1$.

In the first extreme case, the expected weak offline optimum is $1/2$. The competitive ratio is determined by the algorithm's probability of successfully stopping at the absolute best buyer. As $n \to \infty$, this probability converges to $(2+t_1^2(3-6t_2)+(3-2t_2)t_2^2+6t_1^2\ln(t_2/t_1))/12$.

In the second extreme case, the optimal benchmark achieves an expected welfare of nearly $1$. The algorithm's performance is thus bounded by the probability that it makes any successful sale. The total probability of making a successful sale converges to $1 - (1/3+t_2^3/6+t_1^2 t_2/2)$.
By balancing these two worst-case scenarios, the asymptotic weak competitive ratio is given by
\[
\min_{0\le t_1\le t_2\le 1}\max\left\{\frac{6}{2+t_1^2(3-6t_2)+(3-2t_2)t_2^2+6t_1^2\ln\frac{t_2}{t_1}},\frac{1}{1 - \left (\frac{1}{3}+\frac{t_2^3}{6}+\frac{t_1^2\cdot t_2}{2}\right )}\right\}
\]
Minimizing this expression yields the optimal threshold parameters $t_1 \approx 0.296151$ and $t_2 \approx 0.805018$, which evaluates to approximately $1.83683$.  Through careful monotonicity analysis (detailed in Appendix~\ref{apx:thm:zeroupper}), we establish that this asymptotic ratio actually serves as a valid upper bound for all finite $n$, leading to the following theorem.

\begin{theorem}\label{thm:zeroupper}
    The weak competitive ratio for Algorithm \ref{alg:our3} is at most $1.83683$ when $t_1=0.296151$ and $t_2=0.805018$.
\end{theorem}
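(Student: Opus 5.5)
Since the seller's price is $0$, the intermediary always buys, and the weak offline optimum is $\opt(\bX)=\sum_{i=1}^n \frac{1}{i(i+1)}X_i$ with $X_1\ge\dots\ge X_n\ge 0$ normalized to $[0,1]$. Both $\opt(\bX)$ and $\talg(\bX)=\sum_i p_i^{\talg}X_i$ are linear in $\bX$. The plan is to rewrite them in the layer-cake form $X_i=\sum_{k\ge i}(X_k-X_{k+1})$ with $X_{n+1}:=0$. This gives $\opt(\bX)=\sum_{k=1}^n (X_k-X_{k+1})\,\frac{k}{k+1}$ and $\talg(\bX)=\sum_{k=1}^n (X_k-X_{k+1})\,P_k$, where $P_k=\sum_{i\le k}p_i^{\talg}$ is the probability that Algorithm~\ref{alg:our3} sells to one of the top $k$ buyers. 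Since the increments are nonnegative, the ratio is at most $\max_k \frac{k/(k+1)}{P_k}$. So it suffices to show $P_k\ge \frac{k}{(k+1)\cdot 1.83683}$ for every $k\in[n]$ and every $n$. This reduces the statement to a family of explicit probabilities indexed by $(k,n)$. The two extreme configurations in the text are exactly $k=1$ and $k=n$.

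Next, $P_k$ has to be computed in the continuous-time model of Algorithm~\ref{alg:our3}. Let $s$ be the seller's arrival time. For a fixed top-$k$ buyer arriving at time $t>s$, I would condition on the ranks of the earlier buyers. Only the relative order of the best two buyers among those arriving before each time matters, so by the same exchange and "best-so-far arrives uniformly in $[0,t]$" arguments as in the proof of Theorem~\ref{thm:tight}, the stopping probabilities become integrals of quantities like $s/t$, $s^2/t^2$ and $(1-t)^{m}$ over the regions $t>\max\{s,t_1\}$ and $t>\max\{s,t_2\}$. For $k=1$ and $n\to\infty$ this should reproduce $(2+t_1^2(3-6t_2)+(3-2t_2)t_2^2+6t_1^2\ln(t_2/t_1))/12$. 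For $k\to\infty$ it should reproduce $1-(1/3+t_2^3/6+t_1^2t_2/2)$. I would obtain closed forms $P_k(n)$, or at least $P_k$ in the limit $n\to\infty$ with $k$ fixed, plus finite-$n$ correction terms.

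The main obstacle is the monotonicity step, namely showing that the worst $k$ is $1$ or $\infty$ and that finite $n$ is never worse than the limit. Mirroring the $\delta_\mu-\delta_{\mu+1}$ computation in Theorem~\ref{thm:tight}, I would first treat the number $n$ of buyers. Buyers ranked below $k$ affect the algorithm only by possibly triggering an early "second-best-so-far" or "best-so-far" sale, so adding one more low buyer should only decrease $P_k$. Hence the infimum over $n$ is the limit $n\to\infty$, which I would verify by a coupling that inserts an extra low-valued buyer. Then I would study $f(k)=\frac{k}{(k+1)P_k^{\infty}}$ as a function of $k$. I would show it is unimodal or directly bounded by $\max\{f(1),f(\infty)\}$, either by differencing $f(k)-f(k+1)$ analytically or, failing that, by explicit numeric evaluation at the chosen $t_1,t_2$ for small $k$ together with a tail estimate for large $k$ where $P_k^{\infty}$ converges geometrically. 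Finally, plugging in $t_1=0.296151$ and $t_2=0.805018$ makes both extreme expressions equal to about $1.83683$, which gives the theorem.
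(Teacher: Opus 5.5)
Your proposal is correct, and it follows a cleaner route than the paper's appendix.

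\textbf{The paper's route.} The paper computes $p_i^{\talg}$ exactly for every pair $(i,n)$. It then proves that $i(i+1)p_i^{\talg}$ increases from $i=1$ to $i=2$ and decreases afterwards, using Lemma~\ref{lm:mono}, termwise comparisons for $n>10$, and direct numerics for $n\le 10$. From this it concludes that, for each fixed $n$, the worst instance is $(1,0,\dots,0)$ or $(1,\dots,1)$. Only then does it handle the dependence on $n$, via Lemma~\ref{lma:decreasen} for $p_1^{\talg}$ and the $n$-independent identity $\sum_i p_i^{\talg}=1-(\frac13+\frac{t_2^3}{6}+\frac{t_1^2t_2}{2})=:c$.

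\textbf{What you do differently.} Your layer-cake step states explicitly the extreme-ray reduction that the paper leaves implicit. The paper's unimodality of $i(i+1)p_i^{\talg}$ is exactly what makes the running weighted average $\frac{k+1}{k}P_k$ quasi-concave. Your coupling is the real gain: an extra lowest buyer with an independent uniform arrival time leaves every other buyer's best-so-far and second-best-so-far status unchanged, and can only pre-empt a sale. This gives $P_k(n+1)\le P_k(n)$ for all $k$ at once. It generalizes Lemma~\ref{lma:decreasen} and removes the whole two-parameter $(i,n)$ analysis, leaving a one-variable check in $k$ that only needs the limiting probabilities $p_i^{\infty}$. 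What you give up is the paper's exact worst-case formula at each finite $n$; for the theorem, the upper bound is all that is needed.

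\textbf{Two small points.}
\begin{itemize}
\item You need $\frac{k}{(k+1)P_k^\infty}$ to have no interior maximum exceeding its endpoint values; it dips and then rises. So it is the reciprocal $\frac{k+1}{k}P_k^\infty$, not $f$ itself, that should be unimodal.
\item Your tail estimate can be made explicit and uniform in $n$. A sale to a buyer ranked below $k$ forces either all top-$k$ buyers to arrive after $t_1$, or at most one of them before $t_2$. Hence $c-P_k(n)\le(1-t_1)^k+(1-t_2)^k+kt_2(1-t_2)^{k-1}$ for $n\ge 2$. At the given thresholds, $(k+1)$ times this bound is below $c\approx 0.5444$ for all $k\ge 9$, which gives $\frac{k}{(k+1)P_k(n)}\le 1/c$ directly. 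Only $k\le 8$ then needs numerical evaluation of $P_k^\infty$, combined with your coupling bound $P_k(n)\ge P_k^\infty$.
\end{itemize}
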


\subsection{Lower bounding the weak ratio} \label{sec:weaklower}
In this subsection, we prove that even when the seller's price is fixed at $0$, the weak competitive ratio for any algorithm is lower-bounded by $1.76239$. 
Note that since this scenario is a valid input for the general SPVT, this bound applies more broadly and improves upon the trivial lower bound of $\frac{2e^2}{e^2+1}\approx 1.76159$, which can be derived from the strong competitive ratio.

Because this subsection adopts the same setting as Section \ref{sec:lower}, where \emph{the seller’s price is fixed at $0$}, Lemma \ref{lma:evo} applies. Following a similar approach to the proof of Theorem~\ref{thm:lower},  we first investigate the optimal 0-value-independent algorithm for two cases: (i) the buyers’ price vector contains a single 1 with all other entries 0, and (ii) it contains exactly two 1s with all other entries 0. We employ again LP tools in our analysis.

\begin{alignat}{4}
    & \max         & \quad & A  & \quad & \label{obj:maxmin}\\
    & \text{ s.t.} &       & j\cdot x_{i,j}\le 1-\sum_{k=i}^{j-1} (x_{i,k}+y_{i,k})  &       &\forall\, 1 \le i \le j \le n \label{9}\\
    &              &       & j\cdot y_{i,j}\le 1-\sum_{k=i}^{j-1} (x_{i,k}+y_{i,k})  &   &\forall\, 1 \le i \le j \le n \label{10}\\
    &              &       & A\le 2 \sum_{i=1}^{n} \sum_{j=i}^{n} \frac{j}{n^2+n} x_{i,j}  &       & \label{11}\\
    &              &       & A\le \frac32 \sum_{i=1}^{n} \sum_{j=i}^{n} \frac{j}{n^2+n} \left(\frac{2n-j}{n} x_{i,j}+\frac{j-1}{n} y_{i,j}\right)  &       &  \label{12}\\
    &              &       & x_{i,j},y_{i,j}\ge 0   &       &  \forall\, 1 \le i \le j \le n  \label{prob:nonneg}
\end{alignat}

\begin{lemma}\label{lma:weak0}
    Given any $0$-value-independent algorithm $\talg$ on $V$ and a price vector $\bX\in V^n$, let $C_i$ be the event that the seller is the $i$-th agent. For any $j \in \{1, \dots, n\}$, let $D_j$ be the event that $\talg$ sells the item to the $j$-th buyer, who is the first best-so-far candidate to appear. Let $E_j$ be the event that $\talg$ sells to the $j$-th buyer, who is the second best-so-far candidate to appear.
    Define $x_{i,j}=\mathbb{P}(D_j|C_i)$ and $y_{i,j}=\mathbb{P}(E_j|C_i)$ for $1\le i\le j\le n$. Let
    \[A = \min\left\{2\cdot p_1^{\talg}(\bX),\, 1.5\cdot \left(p_1^{\talg}(\bX)+p_2^{\talg}(\bX)\right)\right\}.\]
    Then the pair $(x,y,A)$ constitutes a feasible solution to the program \eqref{obj:maxmin}--\eqref{prob:nonneg}. 
\end{lemma}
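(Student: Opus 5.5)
We follow the template of Lemma~\ref{lma:pb1}, which needs three checks: the two families of inequalities \eqref{9}--\eqref{10}, and the two bounds \eqref{11}--\eqref{12} on $A$. Throughout we use that the relative ranks of the buyers are uniform under the random order and are independent of the seller's position $i$. We also use that a $0$-value-independent algorithm decides at the $j$-th buyer using only $i$, the past decisions, and the relative rank of the current buyer. An algorithm without $\epsilon$-slack can only depend on relative ranks, so we may assume it sells only to buyers of relative rank $1$ or $2$. Any sale at a lower rank can be charged to neither $X_1$ nor $X_2$, so removing such sales only helps the bounds below.

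\textbf{Constraints \eqref{9}--\eqref{10}.} Let $D'_j$ be the event that the $j$-th buyer is best-so-far, and let $H_j$ be the event that the intermediary still holds the item when the $j$-th buyer arrives. Given $C_i$, the event $H_j$ is determined by the algorithm's behaviour on buyers $i,\dots,j-1$, so $\mathbb{P}(H_j\mid C_i)=1-\sum_{k=i}^{j-1}(x_{i,k}+y_{i,k})$. The event $H_j$ depends only on the relative ranks among the first $j-1$ buyers (and the algorithm's internal randomness), while the relative rank of the $j$-th buyer within the first $j$ is independent of those. Hence
\[x_{i,j}\le \mathbb{P}(D'_j\cap H_j\mid C_i)=\tfrac1j\,\mathbb{P}(H_j\mid C_i).\]
The same computation, with ``second best-so-far'' (probability $1/j$ as well), gives \eqref{10}.

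\textbf{Constraints \eqref{11}--\eqref{12}.} As in Lemma~\ref{lma:pb1}, given $C_i\cap D_j$ the conditional probability that the $j$-th buyer is the overall best equals $\frac{1/n}{1/j}=\frac jn$, using $0$-value-independence. Summing gives $p_1^{\talg}=\sum_i\frac1{n+1}\sum_j\frac jn x_{i,j}$, which yields \eqref{11} since $A\le 2p_1$.

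For \eqref{12} we need $p_1+p_2$. Given that the $j$-th buyer is best-so-far, the probability that it is among the top two overall is the probability that at most one of the $n-j$ later buyers beats it. This equals
\[\frac{j}{n}+\frac{j(n-j)}{n(n-1)}.\]
The $LP$ instead uses the coefficient $\frac jn\cdot\frac{2n-j}{n}$. We will verify the needed inequality against this, or handle the discrepancy as an $O(1/n)$ effect. Given that the $j$-th buyer is second-best-so-far, it is in the top two overall iff all later buyers are below it, which has probability
\[\frac{j(j-1)}{n(n-1)}\approx \frac jn\cdot\frac{j-1}{n}.\]
Multiplying by $\frac1{n+1}$, summing, and applying $A\le 1.5(p_1+p_2)$ gives \eqref{12}.

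\textbf{Main obstacle.} The main obstacle is reconciling these exact conditional probabilities with the $LP$ coefficients. The exact values have denominator $n(n-1)$, while the program uses $n^2$. For the second-best term, $\frac{j(j-1)}{n(n-1)}\ge\frac{j(j-1)}{n^2}$, so the $LP$ coefficient is a lower bound, which is the direction we need. For the first-best term we must check
\[\frac jn+\frac{j(n-j)}{n(n-1)}\ \ge\ \frac{j(2n-j)}{n^2}.\]
This reduces to $\frac{n-j}{n-1}\ge\frac{n-j}{n}$, which holds. So all coefficients in \eqref{12} are valid lower bounds, and \eqref{12} follows.
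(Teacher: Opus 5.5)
Your treatment of \eqref{9}--\eqref{11} matches the paper's argument and is fine. The ``we may assume it sells only at relative rank $1$ or $2$'' reduction is unnecessary, and taken literally it would change $x,y$. For the given algorithm you simply have $\mathbb{P}(H_j\mid C_i)\le 1-\sum_{k=i}^{j-1}(x_{i,k}+y_{i,k})$, which is all \eqref{9}--\eqref{10} need.

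The genuine gap is the last step for \eqref{12}: the inequality runs the wrong way. Write $L$ for the double sum on the right of \eqref{12}. What you know is $A\le\frac32(p_1^{\talg}+p_2^{\talg})$. To conclude $A\le\frac32L$ you need $p_1^{\talg}+p_2^{\talg}\le L$, i.e.\ the LP coefficients must be \emph{upper} bounds on the true conditional probabilities. You correctly computed the exact values $\frac jn+\frac{j(n-j)}{n(n-1)}$ and $\frac{j(j-1)}{n(n-1)}$. You then showed they are \emph{at least} $\frac{j(2n-j)}{n^2}$ and $\frac{j(j-1)}{n^2}$. That yields $p_1^{\talg}+p_2^{\talg}\ge L$, which says nothing about $A\le\frac32L$.

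No argument can close this gap, because the statement is false for finite $n$. Take $n=4$ and the (clearly $0$-value-independent) algorithm that sells to the third buyer if and only if that buyer arrives after the seller and is best-so-far. Then $x_{i,3}=\frac13$ for $i\le 3$ and all other entries vanish. We get $p_1^{\talg}=\frac35\cdot\frac14=\frac3{20}$ and $p_2^{\talg}=\frac35\cdot\frac1{12}=\frac1{20}$, so $A=\min\{\frac3{10},\frac3{10}\}=\frac3{10}$. But the right-hand side of \eqref{12} is $\frac32\cdot\frac3{16}=\frac9{32}<\frac3{10}$.

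The paper's proof has the same defect. It asserts that the probability that the $j$-th buyer is $X_2$, given that it is best-so-far (resp.\ second-best-so-far), equals $\frac{j(n-j)}{n^2}$ (resp.\ $\frac{j(j-1)}{n^2}$). As you found, the correct denominators are $n(n-1)$.

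What is true, and what Theorem~\ref{thm:weaklower} actually needs, is that each exact coefficient is at most $\frac{n}{n-1}$ times the corresponding coefficient in \eqref{12}. Hence $p_1^{\talg}+p_2^{\talg}\le\frac{n}{n-1}L$, and $(x,y,\frac{n-1}{n}A)$ is feasible; it still satisfies \eqref{11}, since $\frac{n-1}{n}A\le A\le 2p_1^{\talg}$. The factor $\frac n{n-1}=1+o(1)$ is then absorbed into the $o(1)$ of Lemma~\ref{lma:newlinear}. So your fallback of handling the discrepancy as an $O(1/n)$ effect is the right repair, but it proves this weaker, correct statement, not the lemma as written.
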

\begin{proof}
   Let $D'_j$ be the event that $j$-th buyer holds the current highest price. Let $E'_j$ be the event that $j$-th buyer holds the current second-highest price. Let $F_j$ be the event that $X_1$ is the $j$-th buyer to arrive. Let $H_j$ be the event that $X_2$ is the $j$-th to arrive. Let $D''_j$ be the event that the intermediary has an item when the $j$-th buyer arrives. Then for all $i\le j$,
    \[x_{i,j}=\mathbb{P}(D_j|C_i)\le \mathbb{P}(D'_j\cap D''_j|C_i)=\mathbb{P}(D'_j|C_i)\cdot \mathbb{P}(D''_j|C_i)\le \frac{1}{j}\cdot \left(1-\sum_{k=i}^{j-1} (x_{i,k}+y_{i,k})\right),\]
    \[y_{i,j}=\mathbb{P}(E_j|C_i)\le \mathbb{P}(E'_j\cap D''_j|C_i)=\mathbb{P}(E'_j|C_i)\cdot \mathbb{P}(D''_j|C_i)\le \frac{1}{j}\cdot \left(1-\sum_{k=i}^{j-1} (x_{i,k}+y_{i,k})\right),\]
    which implies that $x$ and $y$ satisfies \eqref{9} and\eqref{10}.
    Furthermore, the combination of
    \begin{align*}
       p_1^{\talg}(\bX) = &\sum_{i=1}^n \sum_{j=i}^n \mathbb{P}(C_i \cap D_j \cap F_j) \\
=& \sum_{i=1}^n \sum_{j=i}^n \mathbb{P}(F_j | D_j \cap C_i) \cdot \mathbb{P}(D_j | C_i) \cdot \mathbb{P}(C_i) \\
=& \sum_{i=1}^n \sum_{j=i}^n \mathbb{P}(F_j | D'_j) \cdot \mathbb{P}(D_j | C_i) \cdot \mathbb{P}(C_i) \\
=& \sum_{i=1}^n \sum_{j=i}^n \frac{j}{n} \cdot \frac{1}{n+1} \cdot x_{ij}, \\
 p_2^{\talg}(\bX)=& \sum_{i=1}^n \sum_{j=i}^n \mathbb{P}(C_i \cap D_j \cap H_j) + \mathbb{P}(C_i \cap E_j \cap H_j) \\
=& \sum_{i=1}^n \sum_{j=i}^n \left( \mathbb{P}(H_j | D'_j) \cdot \mathbb{P}(D_j | C_i) + \mathbb{P}(H_j | E'_j) \cdot \mathbb{P}(E_j | C_i) \right) \mathbb{P}(C_i) \\
=& \sum_{i=1}^n \sum_{j=i}^n \left( \frac{\mathbb{P}(H_j \cap D'_j)}{\mathbb{P}(D'_j)} \cdot x_{ij} + \frac{\mathbb{P}(H_j \cap E'_j)}{\mathbb{P}(E'_j)} \cdot y_{ij} \right)\cdot  \frac{1}{n+1} \\
=& \sum_{i=1}^n \sum_{j=i}^n \left( \frac{1}{n} \cdot \frac{n-j}{n}\cdot j \cdot x_{ij} + \frac{1}{n} \cdot \frac{j-1}{n} \cdot j\cdot y_{ij} \right) \cdot \frac{1}{n+1} \\
=& \sum_{i=1}^n \sum_{j=1}^n \left( \frac{j(n-j)}{n^2} \cdot \frac{1}{n+1} \cdot x_{ij} + \frac{j(j-1)}{n^2} \cdot \frac{1}{n+1} \cdot y_{ij} \right)
\end{align*}
 with $A = \min\left\{2\cdot p_1^{\talg}(\bX),\, 1.5\cdot \left(p_1^{\talg}(\bX)+p_2^{\talg}(\bX)\right)\right\}$ yields \eqref{11} and\eqref{12}. 
\end{proof}

\begin{lemma}\label{lma:newlinear}
    The optimal value of the linear program \eqref{obj:maxmin}--\eqref{prob:nonneg} does not exceed $0.567411+o(1)$.
\end{lemma}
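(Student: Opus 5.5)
We will bound the optimal value of the linear program \eqref{obj:maxmin}--\eqref{prob:nonneg} by weak duality, as in the proof of Lemma~\ref{lma:linear}. The constraints \eqref{11} and \eqref{12} are merged into a single inequality with multipliers $\lambda\in[0,1]$ and $1-\lambda$, so that
\[
A\le \sum_{i=1}^n\sum_{j=i}^n \frac{j}{n^2+n}\Bigl(\bigl(2\lambda+\tfrac32(1-\lambda)\tfrac{2n-j}{n}\bigr)x_{i,j}+\tfrac32(1-\lambda)\tfrac{j-1}{n}\,y_{i,j}\Bigr).
\]
This turns the problem into a pure maximization of a weighted sum of the $x_{i,j},y_{i,j}$ subject to \eqref{9}, \eqref{10} and nonnegativity. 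We then attach dual variables $u_{i,j}\ge0$ to \eqref{9} and $v_{i,j}\ge0$ to \eqref{10}. The dual constraint for $x_{i,j}$ reads
\[
j\,u_{i,j}+\sum_{k=j+1}^n (u_{i,k}+v_{i,k})\ \ge\ c^x_j ,
\]
and the one for $y_{i,j}$ is the same with $v_{i,j}$ and $c^y_j$ in place of $u_{i,j}$ and $c^x_j$. The dual objective is $\sum_{i}\sum_{j\ge i}(u_{i,j}+v_{i,j})$. Since neither the coefficients nor the constraints depend on $i$, we take $u_{i,j}=u_j$ and $v_{i,j}=v_j$ independent of $i$. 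The dual objective then becomes $\sum_j j(u_j+v_j)$, which mirrors the computation in Lemma~\ref{lma:linear}.

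To build the dual solution we pass to the continuum limit $j=\theta n$. Let $c_x(\theta)=\theta\bigl(2\lambda+\frac32(1-\lambda)(2-\theta)\bigr)$ and $c_y(\theta)=\frac32(1-\lambda)\theta^2$, each scaled by $1/n^2$. We look for piecewise-defined densities $u(\theta),v(\theta)$ that make the dual constraints tight wherever they are positive, mimicking $a_j$ from Lemma~\ref{lma:linear}. Write $S(\theta)=\int_\theta^1(u+v)$. Tightness gives $\theta u=c_x-S$ and $\theta v=c_y-S$, hence $-S'=(2S-c_x-c_y)/\theta$ on the region where both are positive. Below some threshold $\tau_2$ we set $v=0$ and keep only $u$ tight, and below a smaller threshold $\tau_1$ we set $u=0$ as well. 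These two thresholds correspond to the thresholds $t_2$ and $t_1$ of Algorithm~\ref{alg:our3}. Each piece is a linear first-order ODE and can be solved in closed form, producing polynomial-plus-logarithm expressions. We choose $\tau_1,\tau_2$ as the points where $u$ or $v$ vanishes. We then check the inequality on the zero regions, where $S(\theta)\ge c_x(\theta)$, $S(\theta)\ge c_y(\theta)$ and $S$ is constant, using monotonicity of $c_x,c_y$ on $[0,\tau_1]$ (or choosing $\lambda$ so that this holds).

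The dual objective equals $\int_0^1\theta\,(u+v)\,\mathrm d\theta=\int_0^1 S(\theta)\,\mathrm d\theta$ after integrating by parts. We optimize it numerically over $\lambda$, and the resulting constant should be $0.567411$. For finite $n$ we discretize by defining $u_j,v_j$ through the discrete recursion, exactly as $a_j$ is defined by the harmonic sums, truncated at zero. Feasibility then holds exactly, by the telescoping identity used in Lemma~\ref{lma:linear}. The objective differs from the integral by $o(1)$ by Riemann-sum estimates, as in that proof.

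The main obstacle is verifying discrete feasibility on the regions where a variable is set to zero. This matters especially for the $y$-constraint with $j<\tau_2 n$ and the $x$-constraint with $j<\tau_1 n$, where we must show that the accumulated tail sum dominates $c_j$ uniformly in $j$. I would first try to prove that $c_x(\theta)-S(\theta)$ and $c_y(\theta)-S(\theta)$ are monotone on these intervals, so that it suffices to check at the threshold. Failing that, I would fall back on a rigorous numerical certificate for the specific choice of $\lambda$, with an explicit error term of order $O(\log n/n)$.
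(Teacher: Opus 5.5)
Your plan is essentially the paper's proof: $\lambda$ is the paper's $w_1$ (fixed there at $0.970659$, with $w_2=1-\lambda$), and the $i$-independent dual solution from the backward tight recursion, truncating $v$ first and then $u$, is exactly Algorithm~\ref{alg:alphabeta}; your closed-form continuum evaluation replaces the paper's numerics more explicitly and does give $\int_0^1 S\,\mathrm{d}\theta\approx 0.567411$ (with $\mu=1-\lambda$: $S=(2+\mu)\theta(1-\theta)$ on the top piece, $\tau_2=(2+\mu)/(2+\tfrac52\mu)\approx 0.9788$, $\tau_1\approx 0.3658$). Two small fixes: tightness gives $-S'=(c_x+c_y-2S)/\theta$, so your sign is reversed, and your ``main obstacle'' is immediate, because $c^x_j,c^y_j$ increase in $j$ while the tail sums $\sum_{k>j}(u_k+v_k)$ of nonnegative terms only grow as $j$ decreases, so once the right-hand side falls below the tail sum at a truncation index it stays below it for every smaller $j$.
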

\begin{proof}
To provide an upper bound for the optimal value of the linear program \eqref{obj:maxmin}--\eqref{prob:nonneg}, we consider its dual linear program:  

\begin{alignat}{4}
    & \min         & \quad & \sum_{i=1}^{n} \sum_{j=i}^{n} (u_{i,j} + v_{i,j}) & & \label{obj:newdual}\\
    & \text{ s.t.} &       & w_1 + w_2 = 1 & & \\
    &              &       & j \cdot u_{i,j} + \sum_{k=j+1}^{n} (u_{i,k} + v_{i,k}) \ge \frac{2j}{n^2+n}w_1 + \frac{3j(2n-j)}{2n(n^2+n)}w_2 & \quad & \forall\, 1 \le i \le j \le n \label{cons:u}\\
    &              &       & j \cdot v_{i,j} + \sum_{k=j+1}^{n} (u_{i,k} + v_{i,k}) \ge \frac{3j(j-1)}{2n(n^2+n)}w_2 & \quad &  \forall\, 1 \le i \le j \le n \label{cons:v}\\
    &              &       & u_{i,j}, v_{i,j}\ge 0 & \quad & \forall\, 1 \le i \le j \le n\label{cons:nonneg}\\
    &              &       & w_1, w_2 \ge 0 & \quad & \label{prob:newdual}
\end{alignat}

By the duality theory of linear programming, it suffices to prove that the dual program \eqref{obj:newdual}--\eqref{prob:newdual} has a feasible solution $(\mathbf{u}, \mathbf{v},w_1,w_2)$ with an objective value of $0.567411+o(1)$. 

First, we set $w_1 = 0.970659$ and $w_2 = 0.029341$, and determine the values of $\bm{\alpha}$ and $\bm{\beta}$ using Algorithm \ref{alg:alphabeta}.

\begin{algorithm}[ht]
\caption{Algorithm for computing $\bm{\alpha}$ and $\bm{\beta}$}
\label{alg:alphabeta}
$s \gets 0$\;

\For{$j \gets 1$ \KwTo $n$}{$\alpha_j \gets 0$, $\beta_j \gets 0$\;}

\For{$j \gets n$ \KwTo $1$}{
    $\alpha_j \gets \dfrac{\frac{2j}{n^2+n} w_1 + \frac{3j(2n-j)}{2n(n^2+n)} w_2 - s}{j}$, $\beta_j \gets \dfrac{\frac{3j(j-1)}{2n(n^2+n)} w_2 - s}{j}$\;\label{step:ab}

    \If{$\beta_j < 0$}{
        $\beta_j \gets 0$, $j^* \gets j$\;\label{step:j*}
        \textbf{break}\;
    }

    $s \gets s + \alpha_j + \beta_j$\;
}

\For{$j \gets j^*$ \KwTo $1$}{
    $\alpha_j \gets \dfrac{\frac{2j}{n^2+n} w_1 + \frac{3j(2n-j)}{2n(n+n)} w_2 - s}{j}$ \;\label{step:a}

    \If{$\alpha_j < 0$}{
        $\alpha_j \gets 0$, $j^{**} \gets j$ \label{step:j**}\;
        \textbf{break}\;
    }

    $s \gets s + \alpha_j$\;
}
\end{algorithm}

We claim that the constraints \eqref{cons:u} and \eqref{cons:v} are satisfied by letting $u_{i,j} = \alpha_j$ and $v_{i,j}=\beta_j$.  To see this, we just need to observe that during the execution phases of the algorithm, the following properties hold:
\begin{enumerate}
    \item When Steps \ref{step:ab} and \ref{step:a} are executed at iteration $j$, the variable $s$ always holds the value $s=\sum_{k=j+1}^{n} (\alpha_k + \beta_k)$.
    \item The right-hand sides (RHS) of both \eqref{cons:u} and \eqref{cons:v} are monotonically increasing in $j$, and the RHS of \eqref{cons:u} is greater than that of \eqref{cons:v}.
    \item When $\alpha_j$ is assigned in Steps \ref{step:ab} and \ref{step:a} (or $\beta_j$ in Step \ref{step:ab}), constraint \eqref{cons:u} (or \eqref{cons:v}) holds with equality.
    \item When $\alpha_j=0$ (or $\beta_j=0$), it must be that $j \alpha_j=0 \ge \frac{2j}{n^2+n}w_1 + \frac{3j(2n-j)}{2n(n^2+n)}w_2-s$ (or $j \beta_j=0 \ge \frac{3j(j-1)}{2n(n^2+n)}w_2-s$).
\end{enumerate}
Finally, the numerical calculation (for $n>10^6$) yields: \[ \sum_{i=1}^{n} \sum_{j=i}^{n} (u_{i,j} + v_{i,j}) \approx 0.567411,\] 
which implies that the objective value is $0.567411+o(1)$. 
\end{proof}

Lemmas \ref{lma:weak0} and \ref{lma:newlinear} imply the following $ 1.76239-o(1)$ lower bound for the weak competitive ratio, which holds even when the seller's price is fixed at 0. 
\begin{theorem}\label{thm:weaklower} 
    For any algorithm $\talg$ of the SPVT and any $\delta>0$, there exists $n_0$ such that, for all $n\ge n_0$, there exists a price vector ${\bX}'\in\mathbb{N}^n$ satisfying 
    \[\frac{\opt(\bX')}{\talg({\bX}')}\ge 1.76239-\delta.\]
\end{theorem}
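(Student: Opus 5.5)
We follow the template of the proof of Theorem~\ref{thm:lower}, with Lemmas~\ref{lma:weak0} and~\ref{lma:newlinear} playing the roles of Lemmas~\ref{lma:pb1} and~\ref{lma:linear}. We restrict to instances with seller price $0$; there the intermediary w.l.o.g.\ always buys.

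\textbf{Step 1: reduce to a value-independent algorithm.} Apply Lemma~\ref{lma:evo} with $\epsilon=1/n^2$. This gives $\talg'$ and an infinite $V\subseteq\mathbb{N}$ such that $\talg'$ is $\epsilon$-value-independent on $V$ and agrees with $\talg$ on $V^n$. Define a $0$-value-independent $\talg''$ by replacing $r_{i,j}$ with the constant $q_{i,j,1}$ when the current buyer is best-so-far, with $q_{i,j,2}$ when it is second-best-so-far, and with $0$ otherwise. Dropping stops at rank $\ge 3$ can only shift mass away from buyers who are never $X_1$ or $X_2$. On the instances below this only affects their contribution, which we make negligible.

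Since $\talg''$ is $0$-value-independent, Lemma~\ref{lma:weak0} and Lemma~\ref{lma:newlinear} together give
\[
\min\{2p_1^{\talg''}(\bX),\,1.5(p_1^{\talg''}(\bX)+p_2^{\talg''}(\bX))\}\le 0.567411+o(1).
\]
Here $p_1,p_2$ are the probabilities of ending with the first- and second-ranked buyer. Because $\talg''$ depends only on relative ranks, these probabilities are the same for every strictly decreasing $\bX\in V^n$. Each $r_{i,j}$ moves by at most $\epsilon$, so $|p_k^{\talg'}-p_k^{\talg''}|\le n\epsilon=1/n$ for $k=1,2$, and the same bound holds for $\talg'$ up to $o(1)$.

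\textbf{Step 2: two instances, each realizing one branch of the min.} Pick elements of $V$ with $X'_1>nX'_2$ and $X'_2>X'_3>\dots>X'_n$; this mimics ``a single 1, others 0''. With seller price $0$ we have $\mu=n$, so
\[
\opt(\bX')=\sum_i \frac{X'_i}{i(i+1)}\ge X'_1/2,
\]
while $\talg(\bX')\le X'_1p_1+X'_2$. Hence $\opt/\talg\ge 1/(2p_1)-o(1)$.

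For the second instance, pick $X''_1>X''_2$ with $X''_2/X''_1\ge 1-1/n$ (possible after a further Ramsey-compatible thinning, or by choosing among large elements of $V$) and $X''_2>nX''_3$. Then $\opt\ge X''_1/2+X''_2/6\approx \tfrac23X''_1$ and $\talg\le X''_1(p_1+p_2)+X''_3$. Hence $\opt/\talg\ge 1/(1.5(p_1+p_2))-o(1)$.

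Taking the worse of the two instances gives a ratio of at least $1/0.567411-o(1)\approx 1.76239-\delta$ for $n\ge n_0$.

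\textbf{Main obstacle.} The delicate point is the second instance. Making $X''_1$ and $X''_2$ nearly equal must be compatible with the set $V$ produced by Lemma~\ref{lma:evo}. An infinite $V\subseteq\mathbb{N}$ need not contain two elements with ratio close to $1$. The fix we would try first is to rescale instead of requiring $X''_2\approx X''_1$.

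Write $X''_2=cX''_1$. Then
\[
\opt\ge X''_1\left(\tfrac12+\tfrac c6\right)\quad\text{and}\quad \talg\le X''_1(p_1+cp_2)+o(X''_1).
\]
If $p_1+p_2$ is the binding term we have $2p_1\ge 1.5(p_1+p_2)$, i.e.\ $p_2\le p_1/3$. One can then check that as $c$ ranges over achievable ratios the bound degrades continuously. If this fails, we would instead apply Ramsey's theorem in Lemma~\ref{lma:evo} to pairs of the form $(m,m+1)$ scaled, i.e.\ work over $V\subseteq\mathbb{Q}_+$ built from a dense chain, so that ratios arbitrarily close to $1$ are available. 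We expect this compatibility check to be the main technical step; the rest is the same bookkeeping as in Theorem~\ref{thm:lower}.
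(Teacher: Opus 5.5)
There is a genuine gap, and it is the one you flag as the main obstacle. Neither of your fallbacks closes it.

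\textbf{Why rescaling fails.} For the branch $1.5(p_1+p_2)$ to matter, you need $X_1,X_2\in V$ with $X_2/X_1\to 1$ and $X_3,\dots,X_n\ll X_2$, with $\talg$ still (nearly) rank-based on such inputs. Lemma~\ref{lma:evo} gives no control over ratios in $V$. With $c=X_2/X_1$, the ratio you can certify is
\[
R(c)=\frac{1/2+c/6}{p_1+c\,p_2},\qquad R(0)=\frac{1}{2p_1},\quad R(1)=\frac{2/3}{p_1+p_2}.
\]
This is monotone in $c$, and increasing exactly when $p_2\le p_1/3$, i.e.\ exactly when the second branch is the binding one. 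So any $c$ bounded away from $1$ falls short of $R(1)$. In general only $c\to 0$ is available, which leaves $1/(2p_1)\ge 2e^2/(e^2+1)\approx 1.76159$, the trivial bound. The entire gain to $1.76239$ comes from constraint \eqref{12} ($w_2>0$ in the dual certificate). So ``degrades continuously'' describes the failure, not a repair.

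The paper's own proof has the same hole. It fixes one vector with $X'_1>X'_2>nX'_3$ and asserts $\frac{X'_1/2+X'_2/6}{X'_1p_1+X'_2p_2+X'_3}\ge\max\{\frac{1/2}{p_1},\frac{2/3}{p_1+p_2}\}-o(1)$. For a fixed vector this quotient is essentially $R(X'_2/X'_1)$, which is only bounded below by the smaller endpoint. Your two-instance split is more careful on the first branch, but the second is unproved in both.

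A minor point: once rank-$\ge3$ stops are deleted, $r_{i,j}$ can move by more than $\epsilon$, so $|p_k^{\talg'}-p_k^{\talg''}|\le n\epsilon$ is not right as stated. The one-sided bound you actually need, $p_k^{\talg'}\le p_k^{\talg''}+n\epsilon$, does follow by coupling the two algorithms on the same coins.

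\textbf{Why the dense-chain / twin-pair idea fails.} The obstruction is structural. Ramsey's theorem gives no control over which homogeneous set you get, and the algorithm can make your configuration impossible. Suppose $\talg$ always sells to a buyer whose price is within a factor $1\pm\eta$ of an earlier price. Then $\epsilon$-value-independence on $V$ (already at $j=2$, $\epsilon<1/2$) forces either all pairs of $V$ or no pairs of $V$ to be near-ties. So every admissible $V$ is either one narrow cluster (no $X_3\ll X_2$) or has no near-ties (no second instance); for $V\subseteq\mathbb{N}$ only the latter is possible.

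If you instead build twin pairs into the hypergraph's vertices, the algorithm observes near-ties and Lemma~\ref{lma:weak0} no longer bounds it. Take the rule ``after buying, sell to the first best-so-far buyer arriving after time $1/e$,'' augmented by the near-tie rule above.
\begin{itemize}
\item On your first instance, with all prices at well-separated scales, no near-ties occur, so it keeps $p_1\to(e^2+1)/(4e^2)$.
\item On your second instance, it always wins when the seller arrives between the two top buyers. Overall $p_1+p_2\to\frac12+\frac{1}{2e^2}+\frac{2}{3e^3}\approx 0.601$.
\end{itemize}
So this pair of instances certifies only about $1.76159$ against it.

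Proving $1.76239$ this way therefore needs a genuinely new ingredient, not a compatibility check. One option is a richer adversarial family, with a correspondingly richer LP, in which near-tie information is useless. Another is a different mechanism that keeps the algorithm rank-based on inputs with a near-tie at the top.
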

\begin{proof}
 By Lemma \ref{lma:evo}, for any $\epsilon>0$ there exists an  algorithm $\talg'$ and infinite set $V\subseteq \mathbb{N}$ such that $\talg'$ is $\epsilon$-value-independent on $V$, and $\talg(\bX)=\talg'(\bX)$ for all $\bX\subseteq V^n$.
 We can define a 0-value-oblivious algorithm $\talg''$ on $V$ by $\mathcal F_{\talg''}=\{r'_{i,j}:1\le i\le j\le n\}$ such that 
 \[r'_{i,j}(x_1,\dots,x_j) = \begin{cases}
q_{i,j,1} & \text{if } x_j \text{ is the largest value in } \{x_1, \dots, x_j\}, \\
q_{i,j,2} & \text{if } x_j \text{ is the second-largest value in } \{x_1, \dots, x_j\}, \\
0 & \text{otherwise}.
\end{cases}
\]
 
 Since $V$ is an infinite subset of $\mathbb{N}$, we can select $n$ distinct numbers $X'_1, X'_2, \dots, X'_n \in V$ such that $X'_1>  X'_2>nX''_3 $ and $X'_3 > X'_4 > \dots > X'_n$. Let $\bX'=(X'_1,X'_2,\ldots,X'_n)$. By Lemmas \ref{lma:weak0} and \ref{lma:newlinear}, we have 
 \[\min\left\{2 p_1^{\talg''}(\bX'),\, 1.5\left(p_1^{\talg''}(\bX')+p_2^{\talg''}(\bX')\right)\right\}\leq 0.567411+o(1).\] 
 Let $\epsilon = 1/n^2$. Since $|p_1^{\talg'}(\bX') - p_1^{\talg''}(\bX')| \le n\epsilon$ and $|p_2^{\talg'}(\bX') - p_2^{\talg''}(\bX')| \le n\epsilon$, it follows that 
 \[\min\left\{2 p_1^{\talg'}(\bX'),\, 1.5 \left(p_1^{\talg'}(\bX')+p_2^{\talg'}(\bX')\right)\right\}\leq 0.567411+o(1).\] 
Then
 \begin{align*}
     \frac{\opt(\bX')}{\talg(\bX')}&\ge \frac{X'_1/2+X'_2/6}{X'_1\cdot p_1^{\talg'}(\bX')+X'_2\cdot p_2^{\talg'}(\bX')+X'_3}\\
     &\ge \max\left\{\frac{1/2}{p_1^{\talg'}(\bX')},\frac{2/3}{p_1^{\talg'}(\bX')+p_2^{\talg'}(\bX')}\right\}-o(1)\\
     &=1.76239-o(1).\qedhere
 \end{align*}
\end{proof}

\begin{remark}
Theorem \ref{thm:weaklower} effectively shows that Algorithm \ref{alg:our3} is asymptotically optimal under the setting where the seller's bid is fixed at 0 and buyer bids consist of either exactly one bid of 1 with all others being 0, or exactly two bids of 1 with all others being 0. To see this, we note that as $n\rightarrow \infty$, the competitive ratio of the algorithm is given by the expression:
\begin{align*}
    &\min_{0\leq t_1 \leq t_2 \leq 1}\max_{\bX}\frac{\opt(\bX)}{\talg(\bX)}\\
    =&\min_{0\leq t_1 \leq t_2 \leq 1}\max_{\bX}\left\{\frac{1}{2p_{1}^{\talg}(\bX)},\frac{\frac{2}{3}}{p_{1}^{\talg}(\bX)+p_{2}^{\talg}(\bX)}\right\}
\end{align*}
where
\begin{align*}
 p_{2}^{\talg}(\bX)&=  \int_0^{t_1} \ds \int_{t_1}^{t_2}(1-t)\cdot \frac{t_1}{t}  \dt 
+ \int_0^{t_1} \ds \int_{t_2}^{1} (1-t)\cdot\frac{t_1}{t} \cdot \frac{t_2}{t}+t_1\cdot \frac{t_2}{t}   \dt \\
&\quad +  \int_{t_1}^{t_2} \ds \int_s^{t_2} (1-t)\cdot \frac{s}{t}  \dt 
+ \int_{t_1}^{t_2} \ds \int_{t_2}^{1} (1-t)\cdot \frac{s}{t} \cdot \frac{t_2}{t}+s\cdot \frac{t_2}{t}  \dt \\
&\quad + \int_{t_2}^{1} \ds \int_s^{1} (1-t)\cdot \frac{s}{t} \cdot \frac{s}{t} +s\cdot \frac{s}{t} \dt\\
&=\frac{1}{12}\left(2+8{t_1}^3+{t_1}^2 (3-12t_2)+(3-4t_2){t_2}^2+6{t_1}^2\ln\frac{t_2}{t_1}\right),
\end{align*}
and 
\[p_{1}^{\talg}(\bX)=\frac{1}{12}\left(2+{t_1}^2(3-6t_2)+(3-2t_2){t_2}^2+6{t_1}^2\ln\frac{t_2}{t_1}\right).\]
When the parameters are set to their optimal values, $t_1 \approx 0.365883$ and $t_2 \approx 0.978772$, Algorithm \ref{alg:our3}  achieves a competitive ratio of $1.76239$.
\end{remark}
\section{Conclusion and future work}
We have established a tight characterization of the strong competitive ratio at $4e^2/(e^2+1) \approx 3.523$, closing the gap for the SPVT problem. The analysis of weak competitive ratios for SPVT opens several promising avenues for future research. We outline key directions that could advance our understanding of online trading problems and their connections to classical optimal stopping problems.

Regarding the weak competitive ratio, we have established a lower bound of 1.76239 and an upper bound of 1.83683 for $\text{SPVT}_0$, and an upper bound of 2 for SPVT. Closing the gaps between the lower and upper bounds remains open. While employing three or more thresholds will naturally yield better competitive ratios, the analytical complexity increases substantially. A compelling theoretical direction is to characterize the underlying structure of the optimal online policy, potentially through a multi-threshold framework, to fully resolve the optimal weak competitive ratio.

Extending the SPVT model to scenarios where the single seller offers multiple homogeneous items, and each buyer is interested in buying one of the items. This can be viewed as a variant of the multiple-choice  secretary problem, introducing new challenges in matching items to buyers while maintaining competitive guarantees. 

Finally, building on the work of Koutsoupias and Lazos \cite{koutsoupias2018online} and Chen et~al. \cite{chen2024algorithms}, investigating settings with multiple sellers could yield practical insights for inventory management and market-making applications.

\clearpage
\appendix

\section{Proof of Lemma \ref{lma:evo}}\label{apx:lma:evo}
The following lemma shows that it suffices to focus on order-oblivious algorithms. 

\begin{lemma}\label{lma:order}
    Given any algorithm $\talg$ for the SPVT, there exists an order-oblivious algorithm $\talg'$ such that $\talg(\bX)=\talg'(\bX)$ for all $\bX$.
\end{lemma}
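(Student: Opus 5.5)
The plan is to symmetrize $\talg$ by conditioning on unordered histories. I read \emph{order-oblivious} as follows: the seller's decision and the stopping probability at the $j$-th buyer depend only on four things, namely the seller's position $i$, the \emph{set} $\{x_1,\dots,x_{j-1}\}$ of earlier buyer prices, the current price $x_j$ (and the seller's price), and whether the intermediary currently holds the item. They do not depend on the order in which the earlier prices arrived.

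Given $\talg$, I will define $\talg'$ through conditional probabilities under $\talg$ itself, where the probability is over the uniformly random arrival order and $\talg$'s internal randomness. Let $S$ be the set of the first $j-1$ buyer prices, and write $H_j$ for the event that $\talg$ still holds the item when the $j$-th buyer arrives. Set
\[
r'_{i,j}(S,x_j)=\frac{\mathbb{P}\bigl(\talg \text{ sells to the } j\text{-th buyer}\,\big|\, C_i,\ S,\ x_j\bigr)}{\mathbb{P}\bigl(H_j\,\big|\,C_i,\ S,\ x_j\bigr)},
\]
with $r'_{i,j}=0$ whenever the denominator vanishes. The seller decision is defined the same way, conditioning on the seller's position and the set of prices seen before the seller. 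The key observation is that, because the arrival order is uniform, conditioning on $C_i$, on the set $S$, and on the identity of the $j$-th price leaves the order of the first $j-1$ buyers uniformly distributed. Consequently these conditional quantities are well defined functions of $(i,S,x_j)$ only. They do not depend on the unknown full vector $\bX$, because the future prices are exchangeable and independent of this past. Hence $\talg'$ is a legitimate online, order-oblivious algorithm.

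Next I will prove by induction on $j$ the following claim: for every $\bX$, every $i$, every set $S$ and every $x_j$,
\[
\mathbb{P}_{\talg'}\bigl(H_j,\ C_i,\ S,\ x_j\bigr)=\mathbb{P}_{\talg}\bigl(H_j,\ C_i,\ S,\ x_j\bigr),
\]
and the same identity holds for the event ``sells to the $j$-th buyer''. For the inductive step, I first sum the $j$-th identity over which element of $S\cup\{x_j\}$ arrived last. This gives equality of the probability of still holding the item after step $j$, given the seller's position and the set of the first $j$ buyer prices. Then I use exchangeability again to pass to the conditioning on $(S',x_{j+1})$ at step $j+1$. The definition of $r'$ then makes the selling probabilities at step $j$ match exactly, since the selling probability equals $r'$ multiplied by the holding probability. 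The base case is the seller step, handled identically.

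Once the claim holds, $\talg$ and $\talg'$ have, for each $j$, the same probability of selling to the $j$-th buyer whose price is $x_j$. Summing over $j$ gives the same $p_k(\bX)$ for every agent $k$, including the seller, via the non-purchase probability. Hence $\talg(\bX)=\talg'(\bX)$.

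The main obstacle is the inductive step, specifically justifying that the conditional holding probability under $\talg$ depends on the history only through the unordered set. This requires care: $\talg$'s decisions can correlate with the order, so the equality is needed \emph{in aggregate over orders}, not pathwise. I will handle this by carrying the induction hypothesis at the level of the joint law of (seller position, set of first $j$ prices, holding status), never at the level of individual orders.
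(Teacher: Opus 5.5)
Your proposal is correct and follows essentially the same route as the paper. Your $r'_{i,j}$ is exactly the paper's $\mathbb{P}(\tau=j\mid(\tau\ge j)\cap[x_1,\dots,x_j]_i)$, and your induction on the selling and holding probabilities, conditioned on the seller's position and the unordered prefix, is the paper's induction. The differences are cosmetic: the paper recovers the holding probability as $1-\sum_{k=i}^{j-1}\mathbb{P}(\tau=k\mid[x_1,\dots,x_j]_i)$ rather than through your one-step ``which element arrived last'' recursion, and it only treats a zero-price seller, whereas you also symmetrize the purchase decision.
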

\begin{proof}
    Let $(x_1,\dots,x_j)_i$ denote the event that the seller is the $i$-th agent (where $i\le j$) and for each  $k\in[j]$, the $k$-th  buyer offers price $x_k$. Let $[x_1]_1=(x_1)_1$, and for all $2\le j\le n$ and $1\le i\le j$, let $[x_1,\dots,x_j]_i$ represent the event  $\cup_{\sigma\in \mathcal{S}_{j-1}} (x_{\sigma(1)},\dots,x_{\sigma(j-1)},x_j)_i$.
We reserve symbol $\tau$ for the decision variable such that algorithm $\talg$ {stops} at the $\tau$-th buyer.
Let algorithm $\talg'$ be defined by $\mathcal F_{\talg'}=\{r'_{i,j}:1\le i\le j\le n\}$ such that 
    \[r'_{i,j}(x_1,\dots,x_j)=\begin{cases}
        \mathbb{P}(\tau=j\,|\,(\tau\ge j)\cap [x_1,\dots,x_j]_i) & \text{if $\mathbb{P}(\tau\ge j\,|\,[x_1,\dots,x_j]_i)>0$,}\\
        0 & \text{otherwise.}
    \end{cases}\]
    It is clear that $\talg'$ is order-oblivious. Suppose that algorithm $\talg'$ stops at the $\tau'$-th agent. Next we show by induction on $j$ that for all $1\le i\le j\le n$ and $(x_1,\dots,x_j)\in \mathbb{R}_+^j$,
\begin{equation}\label{eq:order}
    \mathbb{P}(\tau=j\,|\,[x_1,\dots,x_j]_i)=\mathbb{P}(\tau'=j\,|\,[x_1,\dots,x_j]_i).
\end{equation}
First, we show that (\ref{eq:order}) holds when   $i=j$, which particularly implies the validity of (\ref{eq:order}) for $j=1$ (the base case). Indeed, $\mathbb{P}(\tau\ge i\,|\,[x_1,\dots,x_i]_i)=1$ and 
    \begin{align*}
        \mathbb{P}(\tau'=i\,|\,[x_1,\dots,x_i]_i)&=\frac{1}{(i-1)!}\sum_{\sigma\in \mathcal{S}_{i-1}} r'_{i,i}(x_{\sigma(1)},\dots,x_{\sigma(i-1)},x_i)\\
        &=\frac{1}{(i-1)!}\sum_{\sigma\in \mathcal{S}_{i-1}} \mathbb{P}(\tau=i\,|\,[x_1,\dots,x_i]_i)\\
        &=\mathbb{P}(\tau=i\,|\,[x_1,\dots,x_i]_i).
    \end{align*}
  Proceeding inductively, we assume that $j\ge2$ and $\mathbb{P}(\tau=k\,|\,[x_1,\dots,x_k]_i)=\mathbb{P}(\tau'=k\,|\,[x_1,\dots,x_k]_i)$ holds for all  $1\le i\le k\le j-1$. Because
    \begin{align*}
        \mathbb{P}(\tau=k\,|\,[x_1,\dots,x_j]_i)&=\frac{1}{(j-1)!}\sum_{\sigma\in \mathcal{S}_{j-1}}\mathbb{P}(\tau=k\,|\,(x_{\sigma(1)},\dots,x_{\sigma(k)})_i)\\
        &=\frac{1}{(j-1)!}\sum_{\sigma\in \mathcal{S}_{j-1}}\frac{1}{(k-1)!}\sum_{\pi\in \mathcal{S}_{k-1}} \mathbb{P}(\tau=k\,|\,(x_{\pi(\sigma(1))},\dots,x_{\pi(\sigma(k-1))},x_{\sigma(k)})_i)\\
        &=\frac{1}{(j-1)!}\sum_{\sigma\in \mathcal{S}_{j-1}}\mathbb{P}(\tau=k\,|\,[x_{\sigma(1)},\dots,x_{\sigma(k)}]_i)\\
        &=\frac{1}{(j-1)!}\sum_{\sigma\in \mathcal{S}_{j-1}}\mathbb{P}(\tau'=k\,|\,[x_{\sigma(1)},\dots,x_{\sigma(k)}]_i)\\
        &=\mathbb{P}(\tau'=k\,|\,[x_1,\dots,x_j]_i),
    \end{align*}
    we have
    \begin{align*}
        \mathbb{P}(\tau\ge j\,|\,[x_1,\dots,x_j]_i)&=1-\sum_{k=i}^{j-1}\mathbb{P}(\tau=k\,|\,[x_1,\dots,x_j]_i)\\
        &=1-\sum_{k=i}^{j-1}\mathbb{P}(\tau'=k\,|\,[x_1,\dots,x_j]_i)\\
        &=\mathbb{P}(\tau'\ge j\,|\,[x_1,\dots,x_j]_i).
    \end{align*}
    If $\mathbb{P}(\tau\ge j\,|\,[x_1,\dots,x_j]_i)=0$, then (\ref{eq:order}) is trivial. Otherwise, it leads that
    \begin{align*}
        \mathbb{P}(\tau'=j|[x_1,\dots,x_j]_i)&=\frac{1}{(j-1)!}\sum_{\sigma\in \mathcal{S}_{j-1}} r'_{i,j}(x_{\sigma(1)},\dots,x_{\sigma(j-1)},x_j)\cdot \mathbb{P}(\tau'\ge j|(x_1,\dots,x_j)_i)\\
        &=r'_{i,j}(x_1,\dots,x_j)\cdot \mathbb{P}(\tau'\ge j|[x_1,\dots,x_j]_i)\\
        &=\frac{\mathbb{P}(\tau=j|[x_1,\dots,x_j]_i)}{\mathbb{P}(\tau\ge j|[x_1,\dots,x_j]_i)}\cdot \mathbb{P}(\tau\ge j|[x_1,\dots,x_j]_i)\\
        &=\mathbb{P}(\tau=j|[x_1,\dots,x_j]_i).
    \end{align*}
    Thus, we have
    \begin{align*}
        \talg(\bX)&=\frac{1}{(n+1)!}\sum_{\sigma\in \mathcal{S}_n} \sum_{i=1}^{n} \sum_{j=i}^{n} X_{\sigma(j)} \cdot \mathbb{P}(\tau=j|(X_{\sigma(1)},\dots,X_{\sigma(j)})_i)\\
        &=\frac{1}{(n+1)!}\sum_{\sigma\in \mathcal{S}_n} \sum_{i=1}^{n} \sum_{j=i}^{n} \frac{X_{\sigma(j)}}{(j-1)!}\sum_{\pi\in \mathcal{S}_{j-1}} \mathbb{P}(\tau=j|(X_{\pi(\sigma(1))},\dots,X_{\pi(\sigma(j-1))},X_{\sigma(j)})_i)\\
        &=\frac{1}{(n+1)!}\sum_{\sigma\in \mathcal{S}_n} \sum_{i=1}^{n} \sum_{j=i}^{n} X_{\sigma(j)} \mathbb{P}(\tau=j|[X_{\sigma(1)},\dots,X_{\sigma(j)}]_i)\\
        &=\frac{1}{(n+1)!}\sum_{\sigma\in \mathcal{S}_n} \sum_{i=1}^{n} \sum_{j=i}^{n} X_{\sigma(j)} \mathbb{P}(\tau'=j|[X_{\sigma(1)},\dots,X_{\sigma(j)}]_i)\\
        &=\frac{1}{(n+1)!}\sum_{\sigma\in \mathcal{S}_n} \sum_{i=1}^{n} \sum_{j=i}^{n} X_{\sigma(j)} \mathbb{P}(\tau'=j|(X_{\sigma(1)},\dots,X_{\sigma(j)})_i)\\
        &=\talg'(\bX). \qedhere
    \end{align*}
\end{proof}

We utilize the following Ramsey theorem on infinite hypergraphs as a tool to narrow down the price vector space that needs to be considered.

\begin{lemma}[Ramsey~\cite{ramsey1930problem}]\label{lma:ramsey}
    Let $c,d\in \mathbb{N}$, and let $H$ be an infinite complete d-uniform hypergraph whose hyperedges are colored with $c$ colors. Then there exists an infinite complete $d$-uniform subhypergraph of $H$ that is monochromatic.
\end{lemma}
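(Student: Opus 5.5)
We will prove Lemma~\ref{lma:ramsey} by induction on the uniformity $d$, using only the infinite pigeonhole principle together with a diagonal (nested-sets) construction. Identify the vertex set of $H$ with an infinite set $W$, and let $\chi$ be the given $c$-coloring of the $d$-element subsets of $W$. We must produce an infinite $U\subseteq W$ such that $\chi$ is constant on all $d$-subsets of $U$. The case $d=0$ is trivial, since there is only the empty hyperedge. For the base case $d=1$, $\chi$ colors the vertices themselves. Since $W$ is infinite and there are only finitely many colors, some color class is infinite, and we take $U$ to be that class.

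For the inductive step, let $d\ge 2$ and assume the statement holds for $(d-1)$-uniform hypergraphs with any finite number of colors. We build a sequence of vertices $v_1,v_2,\dots$, infinite sets $W=V_0\supseteq V_1\supseteq V_2\supseteq\cdots$ and colors $c_1,c_2,\dots\in[c]$ as follows. Given an infinite $V_{k-1}$, pick any $v_k\in V_{k-1}$. Color each $(d-1)$-subset $S\subseteq V_{k-1}\setminus\{v_k\}$ by $\chi(S\cup\{v_k\})$. By the induction hypothesis there is an infinite $V_k\subseteq V_{k-1}\setminus\{v_k\}$ on which this induced coloring is constant. Call that constant $c_k$. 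By construction, $v_k\notin V_k$, and all later vertices $v_{k+1},v_{k+2},\dots$ lie in $V_k$. Hence the $v_k$ are distinct.

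The key property is the following. For any $k$ and any $(d-1)$-set $S\subseteq\{v_{k+1},v_{k+2},\dots\}$ we have $S\subseteq V_k$, and therefore $\chi(S\cup\{v_k\})=c_k$. In other words, the color of any $d$-subset of $\{v_1,v_2,\dots\}$ depends only on its element of smallest index. Now apply the pigeonhole principle to the sequence $(c_k)$: some color $c^\ast$ satisfies $c_k=c^\ast$ for infinitely many $k$. Let $U=\{v_k: c_k=c^\ast\}$, which is infinite. Take any $d$-subset of $U$ and let $v_k$ be its element of least index. The remaining $d-1$ elements have larger indices, so the set has color $c_k=c^\ast$. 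Thus $U$ spans the required infinite monochromatic complete $d$-uniform subhypergraph.

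The only delicate point is the bookkeeping in the diagonal construction. We must ensure that each $V_k$ remains infinite, which is exactly what the inductive hypothesis supplies. We must also ensure that every later vertex lies inside every earlier $V_k$, which the nesting guarantees. Once this is in place, the final pigeonhole step is immediate. No countability assumption beyond the existence of an infinite sequence of choices is needed; if desired, one may fix a well-ordering of $W$, or take $W\subseteq\mathbb{N}$ as in the intended application, to make the choices of $v_k$ explicit.
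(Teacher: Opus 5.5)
Your proof is correct. It is the classical argument for the infinite Ramsey theorem: induction on the uniformity $d$, then a nested sequence of infinite sets $V_0\supseteq V_1\supseteq\cdots$. The nesting makes the color of every $d$-subset of $\{v_1,v_2,\dots\}$ depend only on its element of least index, and a final pigeonhole on $(c_k)$ finishes. The bookkeeping is sound:
\begin{itemize}
\item $V_{k-1}\setminus\{v_k\}$ is infinite, so the induction hypothesis applies.
\item For $j>k$ you have $v_j\in V_{j-1}\subseteq V_k$ while $v_k\notin V_k$, which gives distinctness.
\item Every $(d-1)$-subset of $\{v_{k+1},v_{k+2},\dots\}$ lies in $V_k$.
\end{itemize}

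The paper does not prove this lemma at all. It imports it from Ramsey's 1930 paper and invokes it once, in the appendix proof of Lemma~\ref{lma:evo}, on infinite subsets of $\mathbb{N}$ with finitely many colors. So the only difference in route is that your write-up makes that appendix self-contained. It also covers arbitrary infinite vertex sets and any finite number of colors, which is more than the application needs. The citation instead keeps the appendix short at the cost of relying on an external result.
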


Now, we are ready to prove Lemma \ref{lma:evo}.
\begin{proof}[Proof of Lemma \ref{lma:evo}]
    By Lemma \ref{lma:order}, there is an order-oblivious algorithm $\talg'$ such that $\talg(\bX) = \talg'(\bX)$ for any $\bX \subseteq \mathbb{N}^n$.

    Next, we iteratively construct the corresponding set $V$ on which $\talg'$ is $\epsilon$-value-independent. Initially, we set $V = \mathbb{N}$. Then, we conduct a double loop, where the outer loop grows $i$   from 1 to $n$  and the inner loop grows $j$  from $i$ to $n$, to gradually narrow the set $V$ to make the algorithm $\talg$   $(\epsilon,i,j)$-value-independent on it for all $1\le i\le j\le n$. 

   Specifically, given $i$ and $j$, let $H$ denote the complete $j$-uniform hypergraph on the vertex set $V$. For each hyeredge $\{v_1, \dots, v_j\} \in V^j$ with $v_1>v_2\dots>v_j$, because  $r_{i,j}\in [0, 1]$ and $\talg'$ is order-oblivious, there is $(u_1,\dots,u_j)$ with $u_k \in \{1, 2, \dots, \lceil 1/(2\epsilon) \rceil \}$ such that $r_{i,j}(v_{\sigma(1)}, \dots, v_{\sigma(j)}) \in [(2u_{\sigma(j)}-1) \cdot \epsilon - \epsilon, (2u_{\sigma(j)}-1) \cdot \epsilon + \epsilon)$; we color this hyperedge with color $u$. Now we apply Lemma \ref{lma:ramsey} with $c = \lceil 1/(2\epsilon) \rceil$ and $d = j$, and obtain an infinite subhypergraph $H'$ of $H$ that is monochromatic. This monochromaticity ensures that the algorithm $\talg$ is $(\epsilon, i, j)$-value-independent on the vertex set of $H'$. We narrow $V$ to be the vertex set of $H'$, and continue to the narrowing process for the next $i,j$ (defined by the double loop). Finally, we end up with an infinite set $V$ as desired. 
\end{proof}

\section{Proof of Theorem \ref{thm:zeroupper}}\label{apx:thm:zeroupper}
We begin by analyzing the probability that the item is ultimately allocated to agent $X_i$, which is denoted by $p_{i}^{\talg}$. Note that since our algorithm depends only on the relative ranks of the buyers, the value of $p_{i}^{\talg}$ is independent of the values in $\bX$. For clarity of exposition, let $s$ and $t$ denote the arrival times of the seller $X_{n+1}$ and the buyer $X_i$, respectively.

Let $B_{l,m,t}=\{i:l\le i\le m$,  buyer $i$  arrives before $t\}$ be the set of buyers who offer a higher price than $X_m$ and a lower price than $X_l$ and arrive earlier than time $t$. When $B_{l,m,t}\neq \emptyset$, for any $r\in[0,t]$, let $A_{r,l,m,t}$ denote the event that the best buyer (the one with the highest price) in $B_{l,m,t}$ arrives before time $r$, and $A'_{r,l,m,t}$ denote the event that the second-best buyer (the one with the second-highest price) in $B_{l,m,t}$ arrives before time $r$.
      
If buyer $X_i$ obtains the item, then the seller $X_{n+1}$ must arrive before $X_i$, and $X_i$ must arrive after time $t_1$. Furthermore, one of the following two conditions must hold: 

Case 1 is defined as follows:

\begin{enumerate}[label=\arabic*), nosep]
    \item All buyers $X_1, \ldots, X_{i-1}$ arrive after time $t$ (this ensures that $X_i$ is the current highest bidder);
    \item  Among $X_{i+1},\ldots, X_n$, either:
       \begin{itemize}
            \item  all arrive after time $t$, or exactly one buyer among them arrives before $\max\{s, t_1\}$, or
            \item at least two buyers arrive before time $t$, with the highest bidder among them arriving before $\max\{s, t_1\}$ and the second-highest bidder arriving before $\max\{s, t_2\}$.\\
             (These ensure that $X_i$ is the first best-so-far bidder after time $t_1$)
       \end{itemize}
\end{enumerate}

Case 2 is defined as follows:
\begin{enumerate}[label=\arabic*), nosep]
    \item Among $X_1, \ldots, X_{i-1}$, exactly one buyer arrives before $\max\{s, t_1\}$, while all remaining ones arrive after time $t$ 
    \item All buyers $X_{i+1}, \ldots, X_n$ arrive after time $t$; or there exists at least one buyer among them arriving before time $t$, with the highest bidder among these early arrivals arriving before $\max\{s, t_2\}$. 
    (These ensure that $X_i$ is the first second-best-so-far bidder after time $t_2$);
    \item $t \geq t_2$.
\end{enumerate}
\medskip

Let $p_{i1}^{\talg}$ be the probability of obtaining $X_i$ under Case $1$. Since the relationship between $s$ and the critical values $t_1$ and $t_2$ is variable, the integral must be analyzed by dividing the domain into the subintervals $0 \le s \le t_1$, $t_1 \le s \le t_2$, and $t_2 \le s \le 1$. We have
\begin{align*}
     p_{i1}^{\talg}&=
     \int_{0}^{t_1}\ds \int_{t_1}^{1} \bp(B_{1,i-1,t}=\emptyset)\cdot \left ( \bp(B_{i+1,n,t}=\emptyset) + \bp( A_{t_1,i+1,n,t} | |B_{i+1,n,t}|=1)\right ) \dt\\
     &\quad+\int_{0}^{t_1}\ds \int_{t_1}^{t_2} \bp(B_{1,i-1,t}=\emptyset)\cdot \bp( A_{t_1,i+1,n,t} | |B_{i+1,n,t}|\ge 2) \dt\\
     &\quad+\int_{0}^{t_1}\ds \int_{t_2}^{1} \bp(B_{1,i-1,t}=\emptyset)\cdot \bp( A_{t_1,i+1,n,t}\cap A'_{t_2,i+1,n,t} | |B_{i+1,n,t}|\ge 2) \dt\\
     &\quad+\int_{t_1}^{t_2}\ds \int_{s}^{1} \bp(B_{1,i-1,t}=\emptyset)\cdot \left ( \bp(B_{i+1,n,t}=\emptyset) + \bp( A_{s,i+1,n,t} | |B_{i+1,n,t}|=1)\right ) \dt\\
     &\quad+\int_{t_1}^{t_2}\ds \int_{s}^{t_2} \bp(B_{1,i-1,t}=\emptyset)\cdot \bp( A_{s,i+1,n,t} | |B_{i+1,n,t}|\ge 2) \dt\\
     &\quad+\int_{t_1}^{t_2}\ds \int_{t_2}^{1} \bp(B_{1,i-1,t}=\emptyset)\cdot \bp( A_{s,i+1,n,t}\cap A'_{t_2,i+1,n,t} | |B_{i+1,n,t}|\ge 2) \dt\\
     &\quad +\int_{t_2}^{1}\ds \int_{s}^{1} \bp(B_{1,i-1,t}=\emptyset)\cdot \left ( \bp(B_{i+1,n,t}=\emptyset) + \bp( A_{s,i+1,n,t} | |B_{i+1,n,t}|=1)\right ) \dt\\
     &\quad+\int_{t_2}^{1}\ds \int_{s}^{1} \bp(B_{1,i-1,t}=\emptyset)\cdot \bp( A_{s,i+1,n,t}\cap A'_{s,i+1,n,t} | |B_{i+1,n,t}|\ge 2) \dt.
\end{align*}
Note that
\begin{align*}
    \bp(B_{1,i-1,t}=\emptyset)&=(1-t)^{i-1},\\  
    \bp(B_{i+1,n,t}=\emptyset)&=(1-t)^{n-i},\\
    \bp( A_{r,i+1,n,t} | |B_{i+1,n,t}|=1)&=r(n-i)(1-t)^{n-1-i},\\
    \bp( A_{r,i+1,n,t} | |B_{i+1,n,t}|\ge 2)&=\left(1-(1-t)^{n-i} -(n-i) t (1-t)^{n-1-i}\right) \cdot \frac{r}{t},\\
    \bp( A_{r_1,i+1,n,t}\cap A'_{r_2,i+1,n,t} | |B_{i+1,n,t}|\ge 2)&=\left(1-(1-t)^{n-i} -(n-i) t (1-t)^{n-1-i}\right) \cdot\frac{r_1r_2}{t^2}.
\end{align*}
Thus,
\begin{equation}\label{eq:pi1alg}
\begin{aligned}
p_{i1}^{\talg}&= \int_0^{t_1} \ds \int_{t_1}^{1} (1-t)^{n-1}+t_1(n-i)(1-t)^{n-2}    \dt\\
&\quad + \int_0^{t_1} \ds \int_{t_1}^{t_2} \left ((1-t)^{i-1}-(1-t)^{n-1} -t(n-i)(1-t)^{n-2}\right ) \cdot \frac{t_1}{t}  \dt
\\
&\quad+\int_0^{t_1} \ds \int_{t_2}^{1} \left ((1-t)^{i-1}-(1-t)^{n-1} -t(n-i) (1-t)^{n-2}\right ) \cdot \frac{t_1 t_2}{t^2}  \dt
\\
&\quad+ \int_{t_1}^{1} \ds \int_{s}^{1} (1-t)^{n-1}+s(n-i) (1-t)^{n-2}   \dt\\
&\quad +\int_{t_1}^{t_2} \ds \int_{s}^{t_2} \left ((1-t)^{i-1}-(1-t)^{n-1} -t(n-i) (1-t)^{n-2}\right ) \cdot \frac{s}{t}    \dt
\\
&\quad +\int_{t_1}^{t_2} \ds \int_{t_2}^{1} \left ((1-t)^{i-1}-(1-t)^{n-1} -t(n-i) (1-t)^{n-2}\right ) \cdot  \frac{st_2}{t^2} \dt
\\
&\quad+ \int_{t_2}^{1} \ds \int_{s}^{1} (1-t)^{n-1}+s(n-i) (1-t)^{n-2}   \dt\\
&\quad +\int_{t_2}^{1} \ds \int_{s}^{1} \left ((1-t)^{i-1}-(1-t)^{n-1} -t(n-i) (1-t)^{n-2}\right )\cdot \frac{s^2}{t^2}  \dt.
\end{aligned}
\end{equation}

Let $p_{i2}^{\talg}$ be the probability of obtaining $X_i$ in Case $2$. Note that $p_{12}^{\talg}=0$. Since the relationship between $s$ and $t_1$ and $t_2$ is undetermined, the integral must be analyzed by dividing the domain into the subintervals $0 \le s \le t_1$, $t_1 \le s \le t_2$, and $t_2 \le s \le 1$. It is clear that
\begin{align*}
     p_{i2}^{\talg}&=
     \int_0^{t_1} \ds \int_{t_2}^{1} \bp( A_{t_1,1,i-1,t} | |B_{1,i-1,t}|=1)\cdot (\bp(B_{i+1,n,t}=\emptyset)+\bp( A_{t_2,i+1,n,t} | B_{i+1,n,t}\neq \emptyset))\dt\\
     &\quad +\int_{t_1}^{t_2} \ds \int_{t_2}^{1} \bp( A_{s,1,i-1,t} | |B_{1,i-1,t}|=1)\cdot (\bp(B_{i+1,n,t}=\emptyset)+\bp( A_{t_2,i+1,n,t} | B_{i+1,n,t}\neq \emptyset))\dt\\
     &\quad +\int_{t_2}^{1} \ds \int_{s}^{1} \bp( A_{s,1,i-1,t} | |B_{1,i-1,t}|=1)\cdot (\bp(B_{i+1,n,t}=\emptyset)+\bp( A_{s,i+1,n,t} | B_{i+1,n,t}\neq \emptyset))\dt.
\end{align*}
Note that
\begin{align*}
    \bp( A_{r,1,i-1,t} | |B_{1,i-1,t}|=1)&=r(i-1)(1-t)^{i-2},\\
    \bp(B_{i+1,n,t}=\emptyset)&=(1-t)^{i-2},\\
    \bp( A_{r,i+1,n,t} | B_{i+1,n,t}\neq \emptyset)&=(1-(1-t)^{n-i}) \cdot \frac{r}{t}.
\end{align*}
Thus, for any $i\ge 2$,
\begin{equation}\label{eq:pi2alg}
    \begin{aligned}
p_{i2}^{\talg} =& \int_0^{t_1} \ds \int_{t_2}^{1} (i-1)\cdot t_1\cdot (1-t) ^{i-2}\cdot\left ((1-t)^{n-i}+\left (1-(1-t)^{n-i}\right )\cdot \frac{t_2}{t}\right )  \dt
\\
&+\int_{t_1}^{t_2} \ds \int_{t_2}^{1} (i-1)\cdot s\cdot (1-t) ^{i-2}\cdot\left ((1-t)^{n-i}+\left (1-(1-t)^{n-i}\right )\cdot \frac{t_2}{t}\right )  \dt
\\
&+\int_{t_2}^{1} \ds \int_{s}^{1} (i-1)\cdot s\cdot (1-t) ^{i-2}\cdot\left ((1-t)^{n-i}+\left (1-(1-t)^{n-i}\right )\cdot \frac{s}{t}\right )  \dt.
\end{aligned}
\end{equation}


To analyze the monotonicity of $p_i^{\talg}$, we next introduce Lemma~\ref{lm:mono}, which will serve as necessary tools.

Let $a, b \in [0, 1]$ with $a < b$. 
Let $A, B: [a, b] \to [0, 1]$ be continuous functions such that $0 < \alpha(s) < \beta(s) \le 1$ for all $s \in [a, b]$. 
Let $f: [0, 1]^2 \to \mathbb{R}_+$ be a continuous function. 
For integers $i \ge 1$, define the sequences $G(i)$ and $H(i)$ as:
\[G(i)=i(i+1) \int_{a}^{b} \ds\int_{\alpha(s)}^{\beta(s)} f(t, s) (1-t)^{i-1}\dt,\]
\[H(i)=(i-1)i(i+1)\int_{a}^{b} \ds\int_{\alpha(s)}^{\beta(s)} f(t, s) (1-t)^{i-2}\dt.\]
For integers $i \ge 1$, define the first difference sequences:
    \[ \Delta G(i) = G(i) - G(i+1), \quad  \Delta H(i) = H(i) - H(i+1).\]
The following lemma states that for sufficiently large $i$, both $\Delta G(i)$ and $\Delta H(i)$ are decreasing.
\begin{lemma}\label{lm:mono}
    Let $t_* = \min_{s \in [a, b]}\alpha(s)$, and
    \begin{align*}
        I_1&=\left\lceil\frac{4-5t_*+\sqrt{t_*^2-8t_*+8}}{2t_*}\right\rceil\\
        I_2&=\left\lceil\frac{6 - 5t_* + \sqrt{t_*^2 - 12t_* + 12}}{2t_*}\right\rceil.
    \end{align*}
    Then for any integer $i \ge I_1$, the sequence $\Delta G(i)$ is decreasing, and for any integer $i \ge I_2$, the sequence $\Delta H(i)$ is decreasing.
\end{lemma}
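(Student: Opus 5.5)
The plan is to reduce both claims to the sign of a one-variable integrand in $t$ and then bound that sign using $t\ge t_*$. Since $f\ge 0$ and the domain of integration does not depend on $i$, I will write each difference as a single integral with an $i$-dependent polynomial weight in $(1-t)$. Then I will show that weight is nonnegative pointwise for every $t\in[t_*,1]$ once $i$ is large enough.

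\textbf{Computing the second difference.} Set $u=1-t\in[0,1-t_*]$. For $G$, define $g_i(u)=i(i+1)u^{i-1}$. Then
\[
\Delta G(i)-\Delta G(i+1)=\int_a^b\ds\int_{\alpha(s)}^{\beta(s)} f(t,s)\,\bigl(g_i-2g_{i+1}+g_{i+2}\bigr)(1-t)\dt .
\]
Factoring out $u^{i-1}$ leaves the quadratic
\[
q_i(u)=i(i+1)-2(i+1)(i+2)u+(i+2)(i+3)u^2 .
\]
So it suffices to show $q_i(u)>0$ for all $u\in[0,1-t_*]$ and all $i\ge I_1$. This holds if $1-t_*$ lies below the smaller root of $q_i$ (whenever that root is real).

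\textbf{Root condition for $G$.} The smaller root is
\[
u_-=\frac{(i+1)(i+2)-\sqrt{(i+1)(i+2)\bigl((i+1)(i+2)-i(i+3)\bigr)}}{(i+2)(i+3)}=\frac{(i+1)(i+2)-\sqrt{2(i+1)(i+2)}}{(i+2)(i+3)} .
\]
I will impose $1-t_*\le u_-$, i.e. $t_*\ge 1-u_-$. Isolating the square root and squaring gives a quadratic inequality in $i$ with coefficients depending on $t_*$. The larger root of that quadratic should be exactly $\frac{4-5t_*+\sqrt{t_*^2-8t_*+8}}{2t_*}$. I will check this by expanding: the expected quadratic is $t_*i^2+(5t_*-4)i+(6t_*-\ldots)$, and its discriminant should be proportional to $t_*^2-8t_*+8$. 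From this, $i\ge I_1$ follows. I will also verify that monotonicity in $i$ of the condition makes it hold for all larger $i$, as opposed to only a window of $i$.

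\textbf{Root condition for $H$.} This goes the same way with $h_i(u)=(i-1)i(i+1)u^{i-2}$. The second difference factors as $u^{i-2}$ times
\[
(i-1)i(i+1)-2i(i+1)(i+2)u+(i+1)(i+2)(i+3)u^2 .
\]
Dividing by $i+1$ gives a quadratic whose discriminant is $4(i+1)(i+2)\cdot 3$-type (the analogue of the factor $2$ becomes $3$). This yields the $12$'s in $I_2$.

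\textbf{Main obstacle.} The hard part is the algebra that squares the root condition and matches the paper's exact thresholds $I_1,I_2$. I also need to confirm that the resulting inequality is monotone in $i$ beyond the threshold. If the exact closed forms disagree, I would instead verify directly that $q_i(1-t_*)>0$ and $q_i'(u)<0$ on the interval for $i\ge I_1$. Those two facts already give $q_i>0$ on $[0,1-t_*]$.

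Finally, since $f>0$ on a set of positive measure, or at least $f\ge 0$, strict positivity of the weight gives $\Delta G(i)\ge\Delta G(i+1)$, and likewise $\Delta H(i)\ge\Delta H(i+1)$.
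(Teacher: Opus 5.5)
Your proposal is correct and follows the paper's proof. Both arguments write the second difference as $\int\int f\,(1-t)^{i-1}$ (resp. $(i+1)(1-t)^{i-2}$) times a quadratic and take the threshold from its larger root. The paper treats that quadratic as a function of $i$ for fixed $t$ and uses that $P_2(t)$ is decreasing in $t$; you treat it as a function of $u=1-t$ for fixed $i$ and use that it is positive on $[0,u_-(i)]$. Two details for when you do the squaring: it gives $(i+3)$ times $t_*^2i^2+(5t_*^2-4t_*)i+6t_*^2-8t_*+2$ plus the side condition $t_*(i+3)\ge 2$, which is automatic because $P_2(t_*)>2/t_*-3$. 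For $H$, the reduced discriminant is $3i(i+2)$, not $3(i+1)(i+2)$.
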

\begin{proof}
    First, we analyze $\Delta G(i)$. The second difference is
\begin{align*}
    \Delta G(i+1) - \Delta G(i) = \int_{a}^{b}\ds\int_{\alpha(s)}^{\beta(s)} f(t,s) (1-t)^{i-1} \cdot p(t, i) \dt
\end{align*}
where
\begin{align*}
    p(t, i) &= -(i+2)(i+3)(1-t)^2+2(i+1)(i+2)(1-t)-i(i+1)\\
    &= -t^2 i^2 + (4t-5t^2)i -6t^2+8t-2.
\end{align*}
For a fixed $t$, the roots of the equation $p(t, i)=0$ are:
\[ P_1(t) = \frac{4-5t-\sqrt{t^2-8t+8}}{2t}, \quad P_2(t) =  \frac{4-5t+\sqrt{t^2-8t+8}}{2t}.\]
It can be easily verified that when $t \in (0, 1]$, we have $P_1(t) < P_2(t)$ and $P_2(t)$ is monotonically decreasing. Therefore, for any $i \ge I_1 = \lceil P_2(t_*)\rceil$ and $t\in [\alpha(s),\beta(s)]$, we have $p(t,i) \le 0$, which implies that the sequence $\Delta G(i)$ is monotonically decreasing.

Then, we analyze $\Delta H(i)$. The second difference is
\begin{align*}
\Delta H(i+1) - \Delta H(i) = \int_{a}^{b} \ds \int_{\alpha(s)}^{\beta(s)} f(t,s) (1-t)^{i-2} (i+1) \cdot q(t,i) \dt
\end{align*}
where
\begin{align*}
    q(t, i)&=-(i+2)(i+3)(1-t)^2+2i(i+2)(1-t)-i(i-1)\\
    &=-t^2i^2 + (6t-5t^2)i-6(1-t)^2
\end{align*}
For a fixed $t$, the roots of the equation $q(t, i)=0$ are:
\[ Q_1(t) = \frac{6 - 5t - \sqrt{t^2 - 12t + 12}}{2t}, \quad Q_2(t) =  \frac{6 - 5t + \sqrt{t^2 - 12t + 12}}{2t}.\]
It can be easily verified that when $t \in (0, 1]$, we have $Q_1(t) < Q_2(t)$ and $Q_2(t)$ is monotonically decreasing. Therefore, for any $i \ge I_2 = \lceil Q_2(t_*)\rceil$ and $t\in [\alpha(s),\beta(s)]$, we have $q(t,i) \le 0$, which implies that the sequence $\Delta H(i)$ is monotonically decreasing. 
\end{proof}

\begin{lemma}\label{lma 4.4}
 Let $t_1 = 0.296151$ and $t_2 = 0.805018$ in Algorithm 3. Then the competitive ratio of Algorithm 3 is
 \[\max_{\bX}\frac{\opt(\bX)}{\talg(\bX)}=\max\left\{\frac{1}{2p_1^{\talg}},\frac{n}{(n+1)\sum_{i=1}^n p_i^{\talg}}\right\}.\]
\end{lemma}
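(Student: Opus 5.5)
We work in $\text{SPVT}_0$, so the seller's price is $0$ and $\mu=n$. Then
\[
\opt(\bX)=\sum_{i=1}^n \frac{X_i}{i(i+1)},\qquad \talg(\bX)=\sum_{i=1}^n p_i^{\talg}X_i ,
\]
where the $p_i^{\talg}=p_{i1}^{\talg}+p_{i2}^{\talg}$ from \eqref{eq:pi1alg}--\eqref{eq:pi2alg} do not depend on $\bX$ (the algorithm is rank-based). Normalize $X_1=1$ and write $X_i=\sum_{k\ge i}d_k$ with $d_k=X_k-X_{k+1}\ge 0$ and $X_{n+1}:=0$. Summation by parts gives
\[
\opt(\bX)=\sum_{k=1}^n d_k\,\frac{k}{k+1},\qquad \talg(\bX)=\sum_{k=1}^n d_k\,S_k,\quad S_k:=\sum_{i\le k}p_i^{\talg}.
\]
Both are linear in the nonnegative vector $d$. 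The ratio is therefore maximized at a vertex $d=e_k$, which corresponds to $X_1=\dots=X_k=1$ and all other prices $0$, and the worst-case value is $\max_k R_k$ with
\[
R_k:=\frac{k/(k+1)}{S_k}.
\]
The claimed formula says exactly that $\max_k R_k=\max\{R_1,R_n\}$, since $R_1=1/(2p_1^{\talg})$ and $R_n=n/((n+1)S_n)$. Equivalently, it suffices to show $S_k\ge \lambda_k\,\frac{k}{k+1}$ for all $k$, where $\lambda_k$ interpolates between $1/R_1$ and $1/R_n$. Concretely, it is enough to show that $R_k$ is unimodal-from-below in $k$, i.e.\ that it has no interior maximum.

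I would attack this through the increments. Note $S_k-S_{k-1}=p_k^{\talg}$, while the numerator increments are $\frac{k}{k+1}-\frac{k-1}{k}=\frac{1}{k(k+1)}$. So the key quantity is $g(k):=k(k+1)p_k^{\talg}$, the ratio of these increments. If $g$ is first decreasing and then increasing (quasi-convex), then the mediant inequality shows the partial-sum ratio $R_k$ is maximized at an endpoint. Indeed, whenever $g(k+1)$ exceeds the current average ratio, $R$ decreases, and a ratio sequence whose increments are V-shaped cannot have an interior peak.

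To get the shape of $g$, split it into its pieces, $k(k+1)p_{k1}^{\talg}$ and $k(k+1)p_{k2}^{\talg}$. Each is a sum of integrals of the form $G(k)$ or $H(k)$ from Lemma~\ref{lm:mono}, plus terms involving $(1-t)^{n-1}$ or $(n-k)(1-t)^{n-2}$ that are explicitly polynomial in $k$ with tiny weights. For all the integrals the lower limit satisfies $t_*\ge t_1=0.296151$. Lemma~\ref{lm:mono} then shows that $\Delta G$ and $\Delta H$ are decreasing for $k\ge\max\{I_1,I_2\}$, a modest constant (about $15$ for this $t_*$). So the second differences of $g$ have a fixed sign eventually, which gives convexity of the tail. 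For the finitely many small $k$ below $\max\{I_1,I_2\}$ I would verify the inequality $R_k\le\max\{R_1,R_n\}$ directly by evaluating the closed-form integrals numerically at the chosen $t_1,t_2$. These closed forms are uniform in $n$ after bounding the $(1-t)^{n-1}$ terms by their $n\to\infty$ limits with explicit error.

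The main obstacle is the middle range. There the $n$-dependent terms ($(n-k)(1-t)^{n-2}$ in $p_{k1}$ and $(1-t)^{n-k}$ in $p_{k2}$) spoil the clean form required by Lemma~\ref{lm:mono}, and $R_k$ is numerically close to the optimum ratio $1.83683$ at both ends. My plan there is to bound these extra terms, which are exponentially small in $n-k$ on $t\ge t_1$, and absorb them into the slack of the inequality $R_k\le\max\{R_1,R_n\}$. That slack is positive away from $k=1$ and $k=n$ because $t_1,t_2$ were chosen to equalize the two endpoint ratios. Near $k=n$ I would handle things by comparing directly with $R_n$, using that $p_k^{\talg}$ is decreasing there.
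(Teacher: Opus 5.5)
\textbf{Verdict: there is a genuine gap.} Your shape criterion for $g(k)=k(k+1)p_k^{\talg}$ points the wrong way, and the shape you set out to prove is false for these thresholds.

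Your reduction to the vertices $X_1=\dots=X_k=1$, so that the worst ratio is $\max_k R_k$ with $R_k=\frac{k/(k+1)}{S_k}$, is correct. It is more explicit than what the paper writes. The problem is the next step. Write $N_k=\frac{k}{k+1}=\sum_{i\le k}\frac{1}{i(i+1)}$. Then
\[
\frac{1}{R_k}=\frac{S_k}{N_k}=\frac{\sum_{i\le k}\frac{1}{i(i+1)}\,g(i)}{\sum_{i\le k}\frac{1}{i(i+1)}}
\]
is the running weighted mean of $g$. So $R_k$ has no interior maximum if and only if this mean has no interior minimum.

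If $g$ is V-shaped, the mean first decreases with $g$, so $R_k$ increases. It then turns upward once $g$ rises above it, so $R_k$ decreases. That is exactly an interior peak of $R_k$. For example, $g=(1,\tfrac{1}{2},1)$ gives $(R_1,R_2,R_3)=(1,\tfrac{8}{7},\tfrac{9}{8})$. Your own remark that $R$ decreases whenever $g(k+1)$ exceeds the current average already implies this.

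The condition that works is the opposite: $g$ increasing, then decreasing. The mean then rises while $g$ lies above it. Once some $g(k+1)$ falls below it, the mean stays above all later, smaller values of $g$ and decreases from then on. So its minimum is at $k=1$ or $k=n$. This is precisely what the paper proves for $f(i,n)=i(i+1)p_i^{\talg}$: $f(1,n)<f(2,n)>f(3,n)$, and $f(\cdot,n)$ is decreasing on $\{3,\dots,n\}$.

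Your target shape also fails here. As $n\to\infty$, $g(1)=2p_1^{\talg}\approx 0.544$ while $g(2)=6p_2^{\talg}\approx 1.004$. So $g$ has a strict interior maximum at $k=2$ and is not quasi-convex. Even the sound part of your plan, tail convexity via Lemma~\ref{lm:mono}, is aimed at the wrong property. On the tail you need $g$ to \emph{decrease}.

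The middle range is also not handled. You assert that the slack in $R_k\le\max\{R_1,R_n\}$ is positive away from $k=1,n$ because $t_1,t_2$ equalize the endpoint ratios. That does not follow: equalizing the endpoints says nothing about interior $k$, and the claim is essentially the statement to be proved.

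The paper gets monotonicity directly. It splits $f(i,n)=\sum_{k=1}^8\bigl(\beta_{k1}(i)+\beta_{k2}(i,n)\bigr)$ into an $n$-free part and an $n$-dependent part. Lemma~\ref{lm:mono}, plus a numerical check for small $i$, gives $\beta_{k1}(i)-\beta_{k1}(i+1)\ge\beta_{k1}(n-1)-\beta_{k1}(n)$. Comparing integrands then shows this last decrement exceeds $\beta_{k2}(n,n)-\beta_{k2}(n-1,n)\ge\beta_{k2}(i+1,n)-\beta_{k2}(i,n)$ once $n>10$. The cases $i\le 3$ and $n\le 10$ are settled numerically.
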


\begin{proof}
The following ratio
\[\frac{\opt(\bX)}{\talg(\bX)} 
=\frac{\sum_{i=1}^n \frac{1}{i(i+1)}\cdot X_i }{\sum_{i=1}^n p_i^{\talg} \cdot X_i}\]
is a ratio of two linear polynomials in the variables $X_i$. 
Let $f(i, n)=i  (i+1)  \cdot p_i^{\talg}$. Then by Equations \eqref{eq:pi1alg} and \eqref{eq:pi2alg}, we have
\[f(i,n)=\, i (i+1)\cdot p_i^{\talg}=\sum_{k=1}^8 \alpha_k(i,n)=\sum_{k=1}^8(\beta_{k1}(i)+\beta_{k2}(i,n)),\]
where $\alpha_k(i,n)=\beta_{k1}(i)+\beta_{k2}(i,n)$ and 
\begin{align*}
    \beta_{11}(i)&=\int_{0}^{t_1} \ds\int_{t_1}^{t_2} \frac{t_1}{t}  (1-t)^{i-1}  i (i+1) \dt \\\
    \beta_{12}(i,n)&=\int_{0}^{t_1} \ds\int_{t_1}^{t_2} \left(1-\frac{t_1}{t}\right) (1-t)^{n-1}  i (i+1) \dt \\
    \beta_{21}(i)&=\int_{t_1}^{t_2} \ds\int_{s}^{t_2}\frac{s}{t}  (1-t)^{i-1}  i (i+1)  \dt\\
    \beta_{22}(i,n)&=\int_{t_1}^{t_2} \ds\int_{s}^{t_2}\left(1-\frac{s}{t}\right) (1-t)^{n-1}  i (i+1)  \dt\\
    \beta_{31}(i)&=\int_{0}^{t_1}\ds \int_{t_2}^{1} \frac{t_1t_2}{t^2} \ (1-t)^{i-1}  i (i+1)  \dt\\
    \beta_{32}(i,n)&=\int_{0}^{t_1}\ds \int_{t_2}^{1} \left(1-\frac{t_1t_2}{t^2}\right)\ (1-t)^{n-1}  i (i+1)  \dt\\
    \beta_{41}(i)&=\int_{t_1}^{t_2}\ds \int_{t_2}^{1} \frac{st_2}{t^2}  (1-t)^{i-1}  i (i+1)  \dt\\
    \beta_{42}(i,n)&=\int_{t_1}^{t_2}\ds \int_{t_2}^{1} \left(1-\frac{st_2}{t^2}\right) (1-t)^{n-1}  i (i+1)  \dt\\
    \beta_{51}(i)&=\int_{t_2}^{1}\ds\int_{s}^{1} \frac{s^2}{t^2}  (1-t)^{i-1}  i (i+1)\dt\\
    \beta_{52}(i,n)&=\int_{t_2}^{1}\ds\int_{s}^{1} \left(1-\frac{s^2}{t^2}\right) (1-t)^{n-1}  i (i+1)\dt\\
    \beta_{61}(i)&=\int_{0}^{t_1}\ds \int_{t_2}^{1}\frac{t_1t_2}{t} (1-t)^{i-2} (i-1)i(i+1)  \dt\\
    \beta_{62}(i,n)&=\int_{0}^{t_1}\ds \int_{t_2}^{1} t_1 \left(1-\frac{t_2}{t}  (n-1) (1-t)^{n-2}\right)  i (i+1)  \dt\\
    \beta_{71}(i)&=\int_{t_1}^{t_2}\ds \int_{t_2}^{1} \frac{st_2}{t} (1-t)^{i-2} (i-1)i(i+1)   \dt\\
    \beta_{72}(i,n)&=\int_{t_1}^{t_2}\ds \int_{t_2}^{1} s \left(1-\frac{t_2}{ t}   (n-1) (1-t)^{n-2}\right)  i (i+1)   \dt\\
    \beta_{81}(i)&=\int_{t_2}^{1}\ds\int_{s}^{1}  \frac{s^2}{t} (1-t)^{i-2}(i-1)i(i+1)   \dt\\
    \beta_{82}(i,n)&=\int_{t_2}^{1}\ds\int_{s}^{1}  s \left(1-\frac{s}{ t}\right)   (n-1) (1-t)^{n-2} i (i+1)   \dt.
\end{align*}

Next, we will show that $f(i, n)$ increases as $i$ goes from $1$ to $2$, and then decreases as $i$ goes from $2$ to $n$,
which is instrumental in proving that the maximum value of the competitive ratio, $\max_{\bX}\frac{\opt(\bX)}{\talg(\bX)}$, is attained only in the two possible cases: $\bX=(1,0,\dots,0)$ or $\bX=(1,1,\dots,1)$.
We divide the entire proof into three parts:

Part 1: To prove that there exist thresholds $N_0=10$ and $I_0=3$ such that for all $n > N_0$ and $i \ge I_0$, $f(i, n)$ is monotonically decreasing with respect to $i$.

Part 2: To prove that for all $n\in \mathbb{N}$, $f(1,n)<f(2,n)$ and $f(2,n)>f(3,n)$.

Part 3: To prove that for all $n \le N_0$, $f(i, n)$ increases as $i$ goes from $1$ to $2$, and then decreases as $i$ goes from $2$ to $n$.

\paragraph{Part 1:} For notational convenience, we define $M_{k1}(i) = \beta_{k1}(i) - \beta_{k1}(i+1)$ and $M_{k2}(i, n) = \beta_{k2}(i+1, n) - \beta_{k2}(i, n)$. 
Since
\begin{align*}
    \sum_{k=1}^8 M_{k1}(i)-M_{k2}(i,n)&=\sum_{k=1}^8\beta_{k1}(i) - \beta_{k1}(i+1)-(\beta_{k2}(i+1, n) - \beta_{k2}(i, n))\\
    &=\sum_{k=1}^8\beta_{k1}(i)+\beta_{k2}(i, n) - \beta_{k1}(i+1)-\beta_{k2}(i+1, n)\\
    &=f(i,n)-f(i+1,n),
\end{align*}
we only need to prove that for all $k\in \{1,\dots,8\}$, there exist $N_k$ and $I_k$ such that for all $n > N_k$ and $I_k \le i<n$, we have $M_{k1}(i)> M_{k2}(i,n)$.

We now focus on $\alpha_1$. By Lemma \ref{lm:mono}, 
there exists a threshold $I_1' = \left\lceil\frac{4-5t_1+\sqrt{t_1^2-8t_1+8}}{2t_1}\right\rceil=9$, such that $M_{11}(i)$ is monotonically decreasing with respect to $i$ for all $i \ge I_1'$.  
Furthermore, numerical computation shows that $M_{11}(i) \ge M_{11}(9)$ for all $i=3,\dots,9$.
And when $n > \lceil \frac{2}{t_{1}} \rceil - 1=6$, we have 
\begin{align*}
M_{11}(n-1)&=\int_{0}^{t_1} \ds\int_{t_1}^{t_2} \frac{t_1}{t}\left ((n-1)n(1-t)^{n-2}-n(n+1)(1-t)^{n-1}\right )\dt \\&>\int_{0}^{t_1} \ds\int_{t_1}^{t_2} 2n\left (1-\frac{t_1}{t}\right )(1-t)^{n-1}\dt\\&= M_{12}(n-1, n).
\end{align*}
Thus, by setting $N_1 = \max \{ 10, 6 \} = 10$ and $I_1 = 3$, for all $n > N_1$ and $I_1 \le i<n$, we have
\[M_{11}(i)\ge M_{11}(n-1)>M_{12}(n-1,n)\ge M_{12}(i,n).\]

Next, we consider $ \alpha_2$.
By Lemma \ref{lm:mono}, there exists a threshold $I_2' = \left\lceil\frac{4-5t_1+\sqrt{t_1^2-8t_1+8}}{2t_1}\right\rceil=9$, such that $M_{21}(i)$ is monotonically decreasing with respect to $i$ for all $i \ge I_2'$.  
Supplementing this with numerical computations for $i=1, \dots, 9$, we find that $M_{21}(i)$ actually decreases from $3$, which means that for all $3 \le i \le n-1$, $M_{21}(i) \ge M_{21}(n-1)$. 
And when $n>\max _{t\in [s,t_2],s\in [t_1,t_2]} \lceil \frac{2}{s}- \frac{2t}{s} \rceil + 1= \lceil \frac{2}{t_{1}} \rceil - 1=6$, we have
\begin{align*}
M_{21}(n-1)&=\int_{t_1}^{t_2} \ds\int_{s}^{t_2}\frac{s}{t}\left ((n-1)n(1-t)^{n-2}-n(n+1)(1-t)^{n-1}\right ) \dt\\&>\int_{t_1}^{t_2} \ds\int_{s}^{t_2}2n\left (1-\frac{s}{t}\right )(1-t)^{n-1}\dt\\&= M_{22}(n-1, n)\\
&\ge M_{22}(i, n)
\end{align*}
In summary, setting $N_2= \lceil \frac{2}{t_{1}} \rceil - 1=6 $ and $I_2 = 3$ is sufficient to show that $M_{21}(i, n) > M_{22}(i, n)$ for all $n > N_2$ and $i \ge I_2$. Consequently, $\alpha_2(i,n)$ is decreasing as $i$ increases in this region.

We now turn to $\alpha_3$. By Lemma \ref{lm:mono}, there exists a threshold $I_3' = \left\lceil\frac{4-5t_2+\sqrt{t_2^2-8t_2+8}}{2t_2}\right\rceil=1$, such that $M_{31}(i)$ is monotonically decreasing with respect to $i$ for all $i \ge I_3'$, which means that  for all $1 \le i \le n-1$, $M_{31}(i) \ge M_{31}(n-1)$. And when \[n >\left \lceil \max_{t\in [t_2,1]}\left(\frac{2t(1-t)}{t_1t_2}\right) \right\rceil +1= 3,\] we have 
\begin{align*}
M_{31}(n-1)&=\int_{0}^{t_1}\ds \int_{t_2}^{1} \frac{t_1t_2}{t^2}\left ((n-1)n(1-t)^{n-2}-n(n+1)(1-t)^{n-1}\right )\dt \\&>\int_{0}^{t_1}\ds \int_{t_2}^{1} 2n\left (1-\frac{t_1t_2}{t^2}\right )(1-t)^{n-1}\dt\\&= M_{32}(n-1, n)\\
&\ge M_{32}(i, n).
\end{align*}
In summary, setting $N_3 =  3$ and $I_3 = 1$ is sufficient to show that $M_{31}(i) > M_{32}(i, n)$ for all $n > N_3$ and $i \ge I_3$. Consequently, $\alpha_3(i,n)$ is decreasing as $i$ increases in this region.

We now turn to $\alpha_4$. By Lemma \ref{lm:mono}, there exists a threshold $I_4' = \left\lceil\frac{4-5t_2+\sqrt{t_2^2-8t_2+8}}{2t_2}\right\rceil=1$, such that $M_{41}(i)$ is monotonically decreasing with respect to $i$ for all $i \ge I_4'$, which means that  for all $1 \le i \le n-1$, $M_{41}(i) \ge M_{41}(n-1)$. And when \[n >\left \lceil \max_{t\in [t_2,1],s\in[t_1,t_2]}\left(\frac{2t(1-t)}{st_2}\right) \right\rceil +1= 3,\] we have 
\begin{align*}
M_{41}(n-1)&=\int_{t_1}^{t_2}\ds \int_{t_2}^{1} \frac{st_2}{t^2}\left ((n-1)n(1-t)^{n-2}-n(n+1)(1-t)^{n-1}\right )\dt \\&>\int_{t_1}^{t_2}\ds \int_{t_2}^{1} 2n\left (1-\frac{st_2}{t^2}\right )(1-t)^{n-1}\dt\\&= M_{42}(n-1, n)\\
&\ge M_{42}(i, n).
\end{align*}
In summary, setting $N_4 = 3$ and $I_4 = 1$ is sufficient to show that $M_{41}(i) > M_{42}(i, n)$ for all $n > N_4$ and $i \ge I_4$. Consequently, $\alpha_4(i,n)$ is decreasing as $i$ increases in this region.

We now turn to $\alpha_5$. By Lemma \ref{lm:mono}, there exists a threshold $I_5' = \left\lceil\frac{4-5t_2+\sqrt{t_2^2-8t_2+8}}{2t_2}\right\rceil=1$, such that $M_{51}(i)$ is monotonically decreasing with respect to $i$ for all $i \ge I_5'$,
which means that  for all $1 \le i \le n-1$, $M_{51}(i) \ge M_{51}(n-1)$. 
And when \[n >\left \lceil \max_{t_2\le s\le t\le 1}\left(\frac{2t(1-t)}{s^2}\right) \right\rceil +1=3,\] we have 
\begin{align*}
M_{51}(n-1)&=\int_{t_2}^{1}\ds\int_{s}^{1} \frac{s^2}{t^2}\left ((n-1)n(1-t)^{n-2}-n(n+1)(1-t)^{n-1}\right )\dt \\&>\int_{t_2}^{1}\ds\int_{s}^{1} 2n\left (1-\frac{s^2}{t^2}\right )(1-t)^{n-1}\dt\\&= M_{52}(n-1, n)\\
&\ge M_{52}(i, n).
\end{align*}
In summary, setting $N_5 =3$ and $I_5 = 1$ is sufficient to show that $M_{51}(i) > M_{52}(i, n)$ for all $n > N_5$ and $i \ge I_5$. Consequently, $\alpha_5(i,n)$ is decreasing as $i$ increases in this region.

Our attention now shifts to $\alpha_6$. By Lemma \ref{lm:mono}, there exists a threshold $I_6' = \left\lceil\frac{6-5t_2+\sqrt{t_2^2-12t_2+12}}{2t_2}\right\rceil=3$, such that $M_{61}(i)$ is monotonically decreasing with respect to $i$ for all $i \ge I_6'$, which implies that for all $3 \le i \le n-1$, $M_{61}(i) \ge M_{61}(n-1)$. 
In order to satisfy $M_{61}(n-1) \ge M_{62}(n-1, n)$, we require the corresponding integrands to satisfy the inequality, 
i.e., \[((n+2)(n-1)t_1t_2)\cdot \frac{1}{t}+n(n+2)t\ge n(n+2)+2t_1(n-1).\]
Since $((n+2)(n-1)t_1t_2)\frac{1}{t}+n(n+2)t$ is monotonically increasing within the range of $t$ from $t_2$ to $1$, it is equivalent to \[((n+2)(n-1)t_1)+n(n+2)t_2\ge n(n+2)+2t_1(n-1),\]which yields $n\ge 7$.
In summary, setting $N_6=  6$ and $I_6 = 3$ is sufficient to show that for all $n > N_6$ and $i \ge I_6$,
\[M_{61}(i)\ge M_{61}(n-1)>M_{62}(n-1,n)\ge M_{62}(i,n).\]
Consequently, $\alpha_6(i,n)$ is decreasing as $i$ increases in this region.

We now turn to $\alpha_7$, By Lemma \ref{lm:mono}, there exists a threshold $I_7' = \left\lceil\frac{6-5t_2+\sqrt{t_2^2-12t_2+12}}{2t_2}\right\rceil=3$, such that $M_{71}(i)$ is monotonically decreasing with respect to $i$ for all $i \ge I_7'$, 
which implies that for all $3 \le i \le n-1$, $M_{71}(i) \ge M_{71}(n-1)$. 
In order to satisfy $M_{71}(n-1) \ge M_{72}(n-1, n)$, we require the corresponding integrands to satisfy the inequality,  
i.e., 
\[((n+2)(n-1) st_2)\cdot \frac{1}{t}+n(n+2)t\ge n(n+2)+2s(n-1).\]
Since $((n+2)(n-1)st_2)\frac{1}{t}+n(n+2)t-n(n+2)-2s (n-1)$ is monotonically increasing with respect to $t$ and $s$ within the range of $t$ from $t_2$ to $1$, $s$ from $t_1$ to $t_2$ when $n\ge 1$
it is equivalent to \[((n+2)(n-1)t_1)+n(n+2)t_2\ge n(n+2)+2t_1(n-1),\]which yields $n\ge 7$.
In summary, setting $N_7=  6$ and $I_7 = 3$ is sufficient to show that for all $n > N_7$ and $i \ge I_7$,
\[M_{71}(i)\ge M_{71}(n-1)>M_{72}(n-1,n)\ge M_{72}(i,n).\]
Consequently, $\alpha_7(i,n)$ is decreasing as $i$ increases in this region.

We now turn to $\alpha_8$, By Lemma \ref{lm:mono}, there exists a threshold $I_8' = \left\lceil\frac{6-5t_2+\sqrt{t_2^2-12t_2+12}}{2t_2}\right\rceil=3$, such that $M_{81}(i)$ is monotonically decreasing with respect to $i$ for all $i \ge I_8'$, 
which implies that for all $3 \le i \le n-1$, $M_{81}(i) \ge M_{81}(n-1)$. 
In order to satisfy $M_{81}(n-1) \ge M_{82}(n-1, n)$, we require the corresponding integrands to satisfy the inequality,  
i.e., 
\[((n+2)(n-1) s^2)\cdot \frac{1}{t}+n(n+2)t\ge n(n+2)+2s(n-1).\]
Since $((n+2)(n-1)s^2)\frac{1}{t}+n(n+2)t-n(n+2)-2s (n-1)$ is monotonically increasing with respect to $t$ and $s$ within the range of $t$ from $t_2$ to $1$, $s$ from $t_1$ to $t_2$ when $n\ge 1$
it is equivalent to \[((n+2)(n-1)t_2)+n(n+2)t_2\ge n(n+2)+2t_2(n-1),\]which yields $n\ge 7$.
In summary, setting $N_8=  6$ and $I_8 = 3$ is sufficient to show that for all $n > N_8$ and $i \ge I_8$,
\[M_{81}(i)\ge M_{81}(n-1)>M_{82}(n-1,n)\ge M_{82}(i,n).\]
Consequently, $\alpha_8(i,n)$ is decreasing as $i$ increases in this region.
Finally, let $N_0 = \max\{N_1, \dots, N_8\}=10$ and $I_0 = \max \{I_1, \dots, I_8\}=3$. This completes the proof of Part 1.


 \paragraph{Part 2:} Next, we discuss the relative magnitudes of $f(i, n)$ for $i \in \{1, 2, 3\}$ for all $n \in \mathbb{N}$. First, for all $n \in \mathbb{N}$, numerical computation readily shows that\[\sum_{k=1}^8 \beta_{k1}(2) - \sum_{k=1}^8 \beta_{k1}(1)\approx 0.459218>0\]
 Furthermore, it is clear that 
 \[\sum_{k=1}^8 \beta_{k2}(1, n) < \sum_{k=1}^8 \beta_{k2}(2, n).\] 
 Consequently, for all $n \in \mathbb{N}$, we have $f(1, n) < f(2, n)$.

 Similarly, numerical result indicates that 
 \[\sum_{k=1}^8 \beta_{k1}(2) > \sum_{k=1}^8 \beta_{k1}(3)\] 
 for all $n \in \mathbb{N}$. Additionally, we observe that
 \[\sum_{k=1}^8 (\beta_{k2}(3, n) - \beta_{k2}(2, n)) \le \sum_{k=1}^8 (\beta_{k2}(3, 2) - \beta_{k2}(2, 2)).\]
 Since
 \[\sum_{k=1}^8 (\beta_{k2}(3, 2) - \beta_{k2}(2, 2)) \approx 0.1049,\] 
 and 
 \[\sum_{k=1}^8 \beta_{k1}(2) - \beta_{k1}(3) \approx 0.1186,\] 
 we have
 \[\sum_{k=1}^8 \beta_{k1}(2) - \beta_{k1}(3) > \sum_{k=1}^8 (\beta_{k2}(3, 2) - \beta_{k2}(2, 2)) \ge \sum_{k=1}^8 (\beta_{k2}(3, n) - \beta_{k2}(2, n)).\]
 This confirms that for all $n \in \mathbb{N}$, we have $f(2, n) > f(3, n)$.

 \paragraph{Part 3:} Since this part only involves a finite number of values to be verified, we performed numerical computations to check these cases. We confirm that it indeed holds for all $n \le N_0$.
\end{proof}

By Equation \eqref{eq:pi1alg} and $p_{12}^{\talg}=0$, we have
\begin{equation}\label{eq:p1alg}
\begin{aligned}
p_1^{\talg}=& \int_0^{t_1} \ds \int_{t_1}^{t_2} \frac{t_1}{t}+(1-t)^{n-1}\cdot \left(1-\frac{t_1}{t}\right)  \dt
\\
&+\int_0^{t_1} \ds \int_{t_2}^{1} \frac{t_1t_2}{t^2}+(1-t)^{n-1}\cdot \left(1-\frac{t_1t_2}{t^2}\right) +(n-1)t_1 (1-t)^{n-2}\cdot \left(1- \frac{t_2}{t}\right)  \dt
\\
&+\int_{t_1}^{t_2} \ds \int_{s}^{t_2} \frac{s}{t}+(1-t)^{n-1}\cdot \left(1-\frac{s}{t}\right)    \dt
\\
&+\int_{t_1}^{t_2} \ds \int_{t_2}^{1} \frac{st_2}{t^2}+(1-t)^{n-1}\cdot \left(1-\frac{st_2}{t^2}\right) +(n-1)\cdot s \cdot (1-t)^{n-2}\cdot \left(1- \frac{t_2}{t}\right) \dt
\\
&+\int_{t_2}^{1} \ds \int_{s}^{1} \frac{s^2}{t^2}+(1-t)^{n-1}\cdot \left(1-\frac{s^2}{t^2}\right) +(n-1)\cdot s \cdot (1-t)^{n-2}\cdot \left(1- \frac{s}{t}\right) \dt.
\end{aligned}
\end{equation}

We note that $p_1^{\talg}$ depends on $n$. Let $a_n$ be the value of $p_1^{\talg}$ when there are $n$ buyers. The following lemma shows that the sequence $\{a_n\}$ is decreasing.
\begin{lemma}\label{lma:decreasen}
    For any $n\ge 1$, $a_n> a_{n+1}$.
\end{lemma}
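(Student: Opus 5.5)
The plan is to compute $a_n-a_{n+1}$ directly from \eqref{eq:p1alg}. In that formula $n$ enters only through the kernels $(1-t)^{n-1}$ and $(n-1)(1-t)^{n-2}$. The $n$-free terms ($t_1/t$, $t_1t_2/t^2$, $s/t$, $st_2/t^2$, $s^2/t^2$) therefore cancel in the difference, and what remains has the form
\[
a_n-a_{n+1}=\iint w_0(s,t)\,\bigl[(1-t)^{n-1}-(1-t)^n\bigr]\,\dt\,\ds+\iint w_1(s,t)\,\bigl[(n-1)(1-t)^{n-2}-n(1-t)^{n-1}\bigr]\,\dt\,\ds .
\]
Here $w_0\ge 0$ collects the weights $1-t_1/t$, $1-t_1t_2/t^2$, $1-s/t$, $1-st_2/t^2$, $1-s^2/t^2$, each nonnegative on its region of integration. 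The weight $w_1$ collects $t_1(1-t_2/t)$, $s(1-t_2/t)$, $s(1-s/t)$ on the regions with $t\ge t_2$; these are also nonnegative. The first integral is $\iint w_0\, t(1-t)^{n-1}\ge0$, strictly positive, so the difficulty lies entirely in the second, sign-changing kernel $(1-t)^{n-2}\bigl((n-1)-n(1-t)\bigr)=(1-t)^{n-2}(nt-1)$. This kernel is negative only for $t<1/n$.

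The first key step is to note that $w_1$ is supported on $t\ge t_2\approx0.805$. Whenever $n\ge 2$, we have $1/n\le 1/2<t_2$, so $nt-1>0$ throughout the support and the second integral is also nonnegative. That gives $a_n>a_{n+1}$ for all $n\ge2$ at once.

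For the remaining case $n=1$, I will do a direct check. The $w_1$ terms vanish at $n=1$ because of the factor $(n-1)$. At $n=2$ the $w_1$ kernel is $2t-1>0$. So the difference is again a sum of nonnegative integrals, one of them strictly positive. If the factor bookkeeping at $n=1$ is awkward, I will instead evaluate $a_1$ and $a_2$ in closed form from \eqref{eq:p1alg} with $t_1=0.296151$, $t_2=0.805018$, since these are elementary integrals, and compare them numerically.

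The main obstacle is bookkeeping rather than any conceptual difficulty. I must verify carefully that every $n$-dependent term has been correctly assigned to a kernel with a nonnegative weight; in particular, the cancellation of the $n$-free parts needs to be checked. I must also confirm that no $(n-1)(1-t)^{n-2}$ term appears on a region reaching below $t=1/n$. Inspecting \eqref{eq:p1alg}, all such terms lie in $t\in[t_2,1]$, so I expect the sign argument above to go through cleanly.
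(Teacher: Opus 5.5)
Your argument for $n\ge 2$ is correct and is essentially the paper's proof. The paper also subtracts the two copies of \eqref{eq:p1alg}. Its integrands $\Delta_1,\dots,\Delta_5$ are exactly your $w_0\,t(1-t)^{n-1}+w_1(1-t)^{n-2}(nt-1)$, combined into one expression per region. Positivity again comes from $nt-1>0$ on $t\ge t_2$.

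The $n=1$ case, however, is argued incorrectly. The $w_1$ terms do vanish in $a_1$, but they appear in $a_2$ with coefficient $(n-1)(1-t)^{n-2}=1$. So they enter $a_1-a_2$ with a minus sign. Your kernel $(1-t)^{n-2}(nt-1)$ equals $-1$ at $n=1$, which matches your own remark that it is negative for $t<1/n=1$. The kernel $2t-1$ belongs to $a_2-a_3$, not to $a_1-a_2$. Hence $a_1-a_2$ is not a sum of nonnegative integrals: its second part is $-\iint w_1<0$.

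The fix is to combine the two parts pointwise, as the paper's $\Delta_k$ do. At $n=1$ the integrand $w_0\,t-w_1$ reduces on the five regions to $t-t_1$, $t-t_1$, $t-s$, $t-s$, $t-s$ respectively; for example, $(1-t_1t_2/t^2)t-t_1(1-t_2/t)=t-t_1$. These are all nonnegative and strictly positive on sets of positive measure, so $a_1>a_2$. Your numerical fallback would also settle $n=1$, but the sign argument you actually give for that case does not.
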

\begin{proof}
We have
    \begin{align*}
        a_n-a_{n+1}=& \int_0^{t_1} \ds \int_{t_1}^{t_2} \Delta_1(s,t)  \dt+\int_0^{t_1} \ds \int_{t_2}^{1} \Delta_2(s,t)  \dt+\int_{t_1}^{t_2} \ds \int_{s}^{t_2} \Delta_3(s,t)   \dt\\
&+\int_{t_1}^{t_2} \ds \int_{t_2}^{1} \Delta_4(s,t) \dt+\int_{t_2}^{1} \ds \int_{s}^{1} \Delta_5(s,t) \dt,
    \end{align*}
where
\begin{align*}
    \Delta_1(s,t)&=(t-t_1)(1-t)^{n-1}> 0, \quad \forall t\in (t_1,1)\\
    \Delta_2(s,t)&=\frac{(1-t)^{n-2}}{t}\cdot \left((t^2-t_1t_2)(1-t)-t_1(t-t_2)(1-nt)\right)>0, \quad \forall t\in (t_2,1)\\
    \Delta_3(s,t)&=(t-s)(1-t)^{n-1}>0, \quad \forall t\in (s,1)\\
    \Delta_4(s,t)&=\frac{(1-t)^{n-2}}{t}\cdot \left((t^2-st_2)(1-t)-s(t-t_2)(1-nt)\right)>0, \quad \forall t\in (t_2,1)\\
    \Delta_5(s,t)&=\frac{(1-t)^{n-2}}{t}\cdot \left((t^2-s^2)(1-t)-s(t-s)(1-nt)\right)>0, \quad \forall t\in (s,1).
\end{align*}
Thus, $a_n-a_{n+1}>0$, which completes the proof. 
\end{proof}

Finally, we are now ready to analyze the competitive ratio of our algorithm.
\begin{proof}[Proof of Theorem \ref{thm:zeroupper}]
First, by Equation \eqref{eq:p1alg}, we have
\begin{align*}
\lim_{n\to \infty }p_1^{\talg} &=  \int_0^{t_1} \ds \int_{t_1}^{t_2}  \frac{t_1}{t}  \dt 
+ \int_0^{t_1} \ds \int_{t_2}^{1} \frac{t_1 t_2}{t^2}  \dt + \int_{t_1}^{t_2} \ds \int_s^{t_2} \frac{s}{t}  \dt \\
&\quad+ \int_{t_1}^{t_2} \ds \int_{t_2}^{1} \frac{st_2}{t^2}\dt + \int_{t_2}^{1} \ds \int_s^{1} \frac{s^2}{t^2}\dt\\
&=\frac{1}{12}\left (2+t_1^2(3-6t_2)+(3-2t_2)t_2^2+6t_1^2\ln\frac{t_2}{t_1}\right).
\end{align*}

Next, we observe that $\sum_{i=1}^n p_i^{\talg} = 1 - \mathbb{P}(\text{failure to sell})$. This probability of failure to sell, in turn, stems exclusively from the following three mutually exclusive cases:
\begin{enumerate}
   \item\( X_{n+1} \) arrives after both \( X_1 \) and \( X_2 \). In this scenario, any buyer who arrives after \( X_{n+1} \) is at most the third-highest bidder,  preventing the intermediary from making a sale. The probability of this event is \( \frac{1}{3} \).

    \item \( X_{n+1} \) arrives after \( X_1 \) but before \( X_2 \), and \( X_2 \) arrives before time \( t_2 \). Although \( X_2 \) is the current second-highest bidder, its arrival before \( t_2 \) still results in a failed sale. The probability of this event is \( \frac{t_2^3}{6} \).

    \item \( X_{n+1} \) arrives before \( X_1 \), with \( X_1 \) arriving before \( t_1 \) and \( X_2 \) arriving before \( t_2 \). In this situation, the intermediary still cannot  sell the asset. The probability of this event is \( \frac{t_1^2 t_2}{2} \).
\end{enumerate}
Therefore, it follows that 
\[\sum_{i=1}^n p_i^{\talg} = 1 - \left(\frac{1}{3}+\frac{t_2^3}{6}+\frac{t_1^2t_2}{2}\right).\]
Combining with Lemmas \ref{lma 4.4} and \ref{lma:decreasen}, we have 
\begin{align*}
   \max_{\bX}\frac{\opt(\bX)}{\talg(\bX)}&=\max\left\{\frac{1}{2p_1^{\talg}},\frac{1-\frac{1}{n+1}}{\sum_{i=1}^n p_i^{\talg}}\right \}\\
   &\leq \max \left\{\frac{1}{2\displaystyle\lim_{n \to \infty}{p_1^{\talg}}},\frac{1}{\sum_{i=1}^n p_i^{\talg}}\right\}\\
    &=\max\left\{\frac{6}{2+t_1^2(3-6t_2)+(3-2t_2)t_2^2+6t_1^2\ln\frac{t_2}{t_1}},\frac{1}{1 - \left (\frac{1}{3}+\frac{t_2^3}{6}+\frac{t_1^2\cdot t_2}{2}\right )}\right\}\\
    &\approx1.83683.\qedhere
\end{align*}
\end{proof}

\end{document}